\tikzset{snake it/.style={decorate, decoration=snake, segment length=1cm}}
 \newcommand{\ETDS}[4]{}
 \newcommand{\runningheads}[2]{}
 \newcommand{\address}[1]{}
 \newcommand{\recd}[1]{}
 \newcommand{\toprule}{\hline}
\newtheorem{thm}{Theorem}[section]
\newtheorem{prop}[thm]{Proposition}
\newtheorem{lem}[thm]{Lemma}
\newtheorem{cor}[thm]{Corollary}
\newtheorem{definition}[thm]{Definition}
\newtheorem{example}[thm]{Example}
\newtheorem{remark}[thm]{Remark}
\DeclareMathOperator{\Id}{Id}
\DeclareMathOperator{\Perm}{Perm}
\DeclareMathOperator{\Ad}{Ad}
\newcommand{\ordv}[1]{{\omega^0_{#1}}}
\newcommand{\orde}[1]{{\omega^1_{#1}}}
\newcommand{\MESv}[1]{{M_{#1}}}
\newcommand{\MESe}[1]{{M^1_{#1}}}
\newcommand{\enumref}[1]{(\ref{#1})}
\newcommand{\setof}[2]{\left\{#1\,\middle|\,#2\right\}}
\newcommand{\arc}{\!\rightarrow\!}
\newcommand{\larc}[1]{\!\xrightarrow{#1}\!}
\newcommand{\llarc}[2]{\!\xrightarrow{#1,#2}\!}
\newcommand{\supth}{^{\text{th}}}
\newcommand{\N}{\mathbb{N}}
\newcommand{\Z}{\mathbb{Z}}
\newcommand{\C}{\mathbb{C}}
\renewcommand{\AA}{\mathcal{A}}
\newcommand{\DD}{\mathcal{D}}
\newcommand{\LL}{\mathcal{L}}
\newcommand{\OO}{\mathcal{O}}
\newcommand{\GG}{\mathcal{G}}
\newcommand{\X}{\mathsf{X}}
\newcommand{\ZZ}{\mathsf{Z}}
\newcommand{\completeEtau}{\Phi_1}
\newcommand{\completeletter}{\Phi_0}
\newcommand{\completeOtau}[1]{\Psi_1\!\left(#1\right)}
\newcommand{\Nsources}[1]{\alpha_{N}}
\newcommand{\addedge}[2]{#1\oplus #2}
\newcommand{\disjointunion}{\biguplus}
\newcommand{\f}{\!f\!}
\newcommand{\map}[1]{\mathsf{#1}}
\newcommand{\source}{\map{s}}
\newcommand{\range}{\map{r}}
\newcommand{\predicate}[1]{\map{#1}}
\newcommand{\isvalid}[1]{\predicate{valid}\left({#1}\right)}
\newcommand{\isvalidb}[1]{\predicate{valid'}\left(#1\right)}
\title{Visualizing Automorphisms of Graph Algebras} 
\author{
James Emil Avery, Rune Johansen, and Wojciech Szyma{\'n}ski}
\address{
James Emil  Avery \\
Niels Bohr Institute \\
University of Copenhagen Blegdamsvej 17 \\
2100 K\o benhavn \O \\
Denmark \\
\email{avery@nbi.ku.dk} \\[0.5em]

Rune Johansen \\
Department of Mathematics\\
University of Copenhagen\\
Universitetsparken 5\\
2100 K\o benhavn \\
Denmark \\
\email{rune@math.ku.dk} \\[0.5em]

Wojciech Szyma{\'n}ski \\
Department of Mathematics and Computer Science \\
University of Southern Denmark \\
Campusvej 55 \\ 
5230 Odense M \\
Denmark \\
\email{szymanski@imada.sdu.dk}
}
\date{\today}
\begin{document}
%

\maketitle
\begin{abstract}
  Graph $C^\ast$-algebras have been celebrated as $C^\ast$-algebras
  that can be seen, because many important properties may be
  determined by looking at the underlying graph.
  This paper introduces the permutation graph for a permutative endomorphism of a graph $C^\ast$-algebra as a labeled directed multigraph that gives a visual representation of the endomorphism and facilitates computations.
  Combinatorial criteria have previously been developed for deciding when such
  an endomorphism is an automorphism, but here the question is
  reformulated in terms of the permutation graph and new proofs are
  given. Furthermore, it is shown how to use permutation graphs to
  efficiently generate exhaustive collections of permutative
  automorphisms. Permutation graphs provide a natural link to the
  textile systems representing induced endomorphisms on the edge
  shift of the given graph, and this allows the powerful tools of the
  theory of textile systems developed by Nasu to be applied to the
  study of permutative endomorphisms.
\end{abstract}

%


\section{Introduction}
The aim of this paper is to introduce a class of labeled directed multigraphs -- called permutation graphs -- which provide a new powerful tool for the study of permutative endomorphisms of graph $C^\ast$-algebras.
This facilitates an intuitive interpretation of the results given in
\cite{chs_endomorphisms_of_graph_algebras} and connects graph $C^\ast$-algebra theory to 
the theory of textile systems developed by Nasu \cite{nasu_memoir} for the
investigation of shift space endomorphisms and automorphisms.

In \cite{cuntz_automorphisms_of_certain}, Cuntz introduced the Weyl
group of the simple purely infinite $C^\ast$-algebras $\OO_n$. It 
arises as the quotient of the normalizer of a maximal Abelian subgroup
of the $C^\ast$-algebra automorphism group. An important
subgroup of the Weyl group corresponds to those automorphisms that
globally preserve the canonical UHF-subalgebra of $\OO_n$. Cuntz
raised the question of how one determines the structure of this subgroup, and
this was answered in Reference \cite{chs_the_restricted_weyl}. In
\cite{chs_endomorphisms_of_graph_algebras}, this program was taken one
step further by expanding it to a much wider class of graph
$C^\ast$-algebras. In this paper, it is shown how permutative
endomorphisms can be represented by labeled directed multigraphs, and
it is shown how properties of such a permutation graph can be used to
determine whether the corresponding permutative endomorphism is an automorphism.

Permutative endomorphisms of graph $C^*$-algebras and especially of
the Cuntz algebras ${\mathcal O}_n$ have already received considerable
attention and have been studied in several different contexts. In
particular, localized endomorphisms (a class of endomorphisms that
includes permutative ones) of the Cuntz algebras were investigated in
the framework of the Jones index theory in
\cite{Izu93,Lon94,CP96,CS09,CRS10,Hay13}. Similar investigations of
localized endomorphisms of the Cuntz-Krieger algebras were carried out
in \cite{Izu98}. Voiculescu's entropy and related properties of
permutative endomorphisms of the Cuntz algebras were studied in
\cite{SZ08,Ska11}. Such endomorphisms play a role in the approach to
wavelet theory via representations of the Cuntz algebras taken in
\cite{BJ99}. An intriguing connection between permutative
automorphisms of the Cuntz algebra ${\mathcal O}_n$ and automorphisms
of the full two-sided $n$-shift was found in
\cite{chs_the_restricted_weyl}. An interesting combinatorial approach
to permutative endomorphisms of ${\mathcal O}_n$ was presented in
\cite{Kaw05}.

In Section \ref{sec_background}, background information, definitions,
and terminology are given, followed by a definition of permutation
graphs and an examination of their properties in Section \ref{sec_pg}.
Here, a new proof is given of the result from
\cite{chs_endomorphisms_of_graph_algebras}, yielding automorphism
conditions for permutative endomorphisms that are formulated directly 
in terms of their permutation graphs. A connection is shown between
permutation graphs and Nasu's textile systems, allowing the computational
methods of textile systems to be used in the analysis of permutative endomorphisms.
In Section \ref{sec_Etau_alg}, an algorithm is presented that serves to 
exhaustively construct all permutation graphs corresponding to 
permutative automorphisms.
In Section \ref{sec_inner_order}, inner equivalence of permutative 
automorphisms is linked to the behavior of the corresponding permutation graphs as dynamical systems. An order of the vertices and edges of a permutation graph is introduced, and used to construct representations of classes of automorphisms that are inner equivalent through permutative unitaries. These representations are textile systems, allowing the machinery developed in \cite{nasu_memoir} to be applied to computations involving such equivalence classes. 
In Section \ref{sec_Otau_alg}, an algorithm is presented that allows efficient exhaustive construction of inner equivalence classes of permutative automorphisms. 
The methods of Sections \ref{sec_Etau_alg}--\ref{sec_Otau_alg} have been implemented in a set of 
computer programs, which are applied to a particular
graph $C^\ast$-algebra in Section \ref{sec_bowtie} and to certain Cuntz algebras in Section \ref{sec:On}. The latter application confirms the results of \cite{conti_szymanski_labeled_trees} and serves to illustrate how the presented methods outperform previously used algorithms.  

The connection to the theory of textile systems, which this paper
opens, paves the way for an efficient search for permutative
automorphisms using computer programs. First of all, the algorithms
presented in Section \ref{sec_Etau_alg} and \ref{sec_Otau_alg} drastically reduce the number of
cases one needs to examine in an exhaustive search for (equivalence classes of) permutative
automorphisms. Secondly, the conditions introduced in Section
\ref{sec_pg} -- guaranteeing that a permutative endomorphism is an
automorphism -- can be efficiently tested using an algorithm that will
be presented in a forthcoming paper. Finally, in collaboration with Brendan Berg,
the first and second authors have developed a number of practical 
algorithms for computations on textile systems that allow one to
efficiently investigate the order of a given permutative automorphism. A
forthcoming paper will apply these techniques to an investigation of
concrete permutative automorphisms like the ones identified in Section
\ref{sec_bowtie}.

Although we study endomorphisms of graph $C^*$-algebras, which are
analytic in nature, our approach relies only on discrete and combinatorial
properties. Therefore we believe that the results of this paper after
small modifications may be applicable to purely algebraic objects. In
particular, Proposition \ref{prop_action_on_path} and Theorem \ref{thm_automorphism} should apply
to permutative endomorphisms of Leavitt path algebras,
\cite{AAP05}. Similarly, adaptations of the algorithms constructed in sections \ref{sec_Etau_alg} and \ref{sec_Otau_alg} can be used to construct permutative automorphisms of Leavitt path algebras. Furthermore, the action of permutative endomorphisms on
the graph $C^*$-algebra restricts to the action on the subgroup of
permutative unitaries inside the unitary group. This restriction gives
rise to interesting endomorphisms and automorphisms of certain locally
finite groups (cf. comments in \cite[Section 3.3]{CS12} about
permutative automorphisms of the infinite symmetric group with
diagonal embeddings). A more detailed account of this aspect of the
theory of endomorphisms of graph algebras will be given elsewhere.

\subsubsection*{Acknowledgments.}
Supported by \textsc{VILLUM FONDEN} through the experimental mathematics network at the University of Copenhagen. Supported by the Danish National Research Foundation through the Centre for Symmetry and Deformation (DNRF92) and by the Danish Natural Science Research Council (FNU).
  The authors are grateful to Mike Boyle and Brendan Berg for valuable
  comments and suggestions concerning the theory of textile systems.

\section{Background and notation}\label{sec_background}

\subsection{Graphs, graph algebras, and permutative endomorphisms.}
Let $E = (E^0,E^1,\range,\source)$ be a \emph{directed multigraph},  
where $E^0$ and
$E^1$ are countable sets of vertices and edges, respectively, while
$\source,\range \colon E^1 \to E^0$ denote the source and range maps.
For notational convenience -- and in accordance with the tradition in the graph algebra literature -- we will deviate from the graph theoretical terminology and simply call these objects \emph{graphs} in the following.
When multiple graphs are considered at the same time, 
the notation $\source_E$ and $\range_E$ will be used to disambiguate to which graph the maps belong.
If an edge $e\in E^1$ has $\source(e) = u$ and $\range(e) = v$, we write $e\colon u\to v$.
A \emph{path} $\mu$ of \emph{length} $l$, is a sequence
of edges $\mu_1 \cdots \mu_l$ for which $\range(\mu_i) = \source(\mu_{i+1})$. 
If $\source(\mu_i) = v_i$ and $\range(\mu_i) = v_{i+1}$ for vertices $v_1,\ldots,v_{l+1}$,
we write $\mu\colon v_1\arc \cdots \arc v_{l+1}$.

For each $l \in \N$, $E^l$ will denote the set of paths in $E$ of
length $l$, and $E^\ast$ will denote the set of all finite paths. The
range and source maps are extended to $E^\ast$ in the natural way.
For $u,v \in E^0$, let $E^l_{u \to v} = \{ \alpha \in E^l \mid
\source(\alpha) = u, \range(\alpha) = v\}$, $E^l_{u \to \ast} = \{ \alpha \in E^l
\mid \source(\alpha) = u\}$, and $E^l_{\ast \to v} = \{ \alpha \in E^l \mid
\range(\alpha) = v\}$. A \emph{sink} is a vertex that emits no edges, and a
\emph{source} is a vertex that receives no edges. A \emph{cycle} is a
path $\mu$ with $\range(\mu) = \source(\mu)$, and a \emph{loop} is a cycle of
length 1. A cycle $\mu \in E^k$ is said to have an \emph{exit} if
there exits $1 \leq i \leq k$ such that $\source(\mu_i)$ emits at least two
edges.

The following definition of the graph $C^*$-algebra corresponding to
an arbitrary countable graph was given in
\cite{fowler_laca_raeburn}. Graph algebras provide a natural
generalization of the Cuntz-Krieger algebras, \cite{cuntz_krieger},
and are a subject of wide-spread investigations by specialists in
the theory of operator algebras, symbolic dynamics, non-commutative
geometry and quantum groups.  The \emph{graph $C^\ast$-algebra}
\cite{kumjian_pask_raeburn_renault,kumjian_pask_raeburn,fowler_laca_raeburn}
of $E$, denoted $C^\ast(E)$, is the universal $C^\ast$-algebra
generated by a collection of mutually orthogonal projections $\{P_v
\mid v \in E^0\}$ and partial isometries $\{S_e \mid e \in E^1\}$
satisfying the relations
\begin{itemize}
\item $S_e^\ast S_e = P_{\range(e)}$ and $S_e^\ast S_f = 0$ when $e \neq f$,
\item $S_e S_e^\ast \leq P_{\source(e)}$,
\item $P_v = \sum_{\source(e) = v} S_e S_e^\ast$ when $v$ emits a non-zero finite number of edges.
\end{itemize}
For a general introduction to graph $C^\ast$-algebras, see
\cite{raeburn}. An \emph{endomorphism} of $C^*(E)$ is a unital $*$-homomorphism from $C^*(E)$ into itself. 
The \emph{Leavitt path algebra} of $E$ is analogously defined 
as the universal algebraic object satisfying the relations
given above. It is not a $\ast$-algebra, however, so a partial
isometry $S_e^\ast$ must be added to the list of generators for each
$e \in E^1$.

For $\mu \in E^k$, let $S_\mu = S_{\mu_1} \cdots S_{\mu_k}$ be the
corresponding non-zero partial isometry in $C^\ast(E)$. The final projection of $S_\mu$ is $P_\mu = S_\mu S_\mu^\ast$, and the initial
projection is $P_{\range(\mu)}$. The final projections in $\{ P_\mu \mid
\mu \in E^\ast \}$ commute, and the $C^\ast$-algebra generated by this
set is called the \emph{diagonal subalgebra} and it is denoted
$\DD_E$. If every cycle in $E$ has an exit, then $\DD_E$ is a maximal
Abelian subalgebra (MASA) in $C^\ast(E)$ as shown in
\cite[Thm.~5.2]{hopenwasser_justin_power} and
\cite[Thm.~3.7]{nagy_reznikoff}.
For simplicity, all graphs considered
in the following will be assumed to be finite, to have no sinks or
sources and to have an exit from every cycle.

Given $k \in \N$, a permutation $\tau \in \Perm (E^k)$ is said to be
\emph{endpoint-fixing}
if $\tau(E^k_{u \to v}) = E^k_{u \to v}$ for
all $u,v \in E^0$. If $\tau \in \Perm (E^k)$ is endpoint-fixing, then
$U_\tau = \sum_{\alpha \in E^k} S_{\tau(\alpha)} S^\ast_\alpha$
defines a unitary in $C^\ast(E)$ for which $U_\tau S_\alpha =
S_{\tau(\alpha)}$ for all $\alpha \in E^k$. The universality of
$C^\ast(E)$ guarantees that an endomorphism $\lambda_\tau \colon
C^\ast(E) \to C^\ast(E)$ can be defined by $\lambda_\tau(S_e) = U_\tau
S_e$. Such an endomorphism is said to be a \emph{permutative
  endomorphism} at \emph{level} $k$. Note that this part of the 
construction is not limited to unitaries arising from permutations: it
can be carried out for any unitary in the multiplier algebra that commutes
with the vertex projections. This is examined in
\cite{chs_endomorphisms_of_graph_algebras}. By the gauge invariant uniqueness theorem, a permutative endomorphism is automatically injective \cite[Prop. 2.1]{chs_endomorphisms_of_graph_algebras}. 

\subsection{Labeled graphs and shift spaces.}
Given a finite set $\AA$, a \emph{labeled} graph with alphabet $\AA$ is a pair
$(E, \LL)$ consisting of a graph $E$ and a surjective \emph{labeling 
  map} $\LL \colon E^1 \to \AA$.  The labeling map naturally extends 
to $E^\ast$.                                                         
A labeled graph $(E, \LL)$ is said to be
\emph{left-resolving} if for all $e_1,e_2 \in E^1$, $\range(e_1) = \range(e_2)$ and $\LL(e_1) = \LL(e_2)$ implies $e_1 = e_2$.
A labeled graph $(E, \LL)$ is said to be
\emph{right-resolving} if for all $e_1,e_2 \in E^1$, $\source(e_1) = \source(e_2)$ and $\LL(e_1) = \LL(e_2)$ implies $e_1 = e_2$.
A labeled graph $(E, \LL)$ is said to be
\emph{left-synchronizing with delay $m \in \N$} if $\source(\alpha) = \source(\beta)$ whenever $\alpha, \beta \in E^m$ and
$\LL(\alpha) = \LL(\beta)$. In the case where no two parallel edges have the same label, the labeled graph is left-synchronizing if and only if it is \emph{right-closing} (see \cite[def. 5.1.4]{lind_marcus} for a definition of right-closing labeled graphs).
Analogously, $(E, \LL)$ is said to be
\emph{right-synchronizing} if there exists $m$ such that any two paths
with the same label and length greater than or equal to $m$ must have the same range.

If $e\in E^1$ has $\source(e) = u$, $\range(e) = v$, and $\LL(e) = a$, we write
$e\colon u\larc{a} v$. Similarly, the statement that $\mu\colon v_1\arc \cdots\arc v_n$ is a 
path over $E$ with labels $\LL(\mu_i) = a_i$ may be written as
$
  \mu\colon v_1 \larc{a_1} \cdots \larc{a_{n-1}} v_n
$.

For a graph $E$, the collection of one-sided infinite paths $\X_E^+ = \{ e_1 e_2 \ldots
\mid e_i \in E^1, \range(e_i) = \source(e_{i+1}) \}$ is a \emph{one-sided shift
  space} equipped with the shift map $\sigma \colon \X_E^+ \to \X_E^+$
defined by $\sigma(e_1 e_2 e_3 \cdots) = e_2 e_3 \cdots$. Similarly,
the collection of bi-infinite paths $\X_E = \{ \cdots e_1 e_2 \cdots
\mid e_i \in E^1, \range(e_i) = \source(e_i+1) \}$ is a \emph{two-sided shift
  space} equipped with the shift map $\sigma \colon \X_E \to \X_E$
defined by $\sigma(x)_i = x_{i+1}$. $X_E$ is called the \emph{edge
  shift} of $E$. For a thorough introduction to the theory of shift
spaces, see \cite{lind_marcus}. The shift space \emph{presented} by a labeled graph $(E,\LL)$ is $\X_{(E,\LL)} = \{ \cdots \LL(e_1) \LL(e_2) \cdots
\mid e_i \in E^1, \range(e_i) = \source(e_{i+1}) \}$. The \emph{language} of such a shift space is $\LL(E^\ast)$. The \emph{sliding block code induced by the labeling} is the map $\phi_\LL \colon \X_E \to \X_{(E,\LL)}$ defined by $\phi_\LL(\cdots e_1 e_2 \cdots) = \cdots \LL(e_1) \LL(e_2) \cdots$.

It is well-known (see e.g.\ \cite[Thm.\ 3.7]{webster}) that the
Gelfand spectrum of $\DD_E$ can be identified with $\X^+_E$ via the
identification of $z^+ \in \X^+_E$ with the map $\phi_{z^+} \colon
\DD_E \to \C$ defined by
\begin{displaymath}
\phi_{z^+}(P_\mu) = \left\{  
\begin{array}{c l}
1, & \mu \textnormal{ is a prefix of } z \\
0, & \textrm{otherwise}
\end{array}
\right. .
\end{displaymath}
Hence, an endomorphism of $C^\ast(E)$ that preserves $\DD_E$ will
induce an endomorphism of $\X^+_E$.

\section{Permutation graphs}\label{sec_pg}
\subsection{Definition and basic properties.}
Let $E$ be a finite graph without sinks or sources where every cycle has an exit.

\begin{figure}
\begin{center}
\begin{tikzpicture}
  [bend angle=10,
   clearRound/.style = {circle, inner sep = 0pt, minimum size = 17mm},
   clear/.style = {rectangle, minimum width = 5 mm, minimum height = 5 mm, inner sep = 0pt},  
   greyRound/.style = {circle, draw, minimum size = 1 mm, inner sep =
      0pt, fill=black!10},
   grey/.style = {rectangle, draw, minimum size = 6 mm, inner sep =
      1pt, fill=black!10},
    white/.style = {rectangle, draw, minimum size = 6 mm, inner sep =
      1pt},
   to/.style = {->, shorten <= 1 pt, >=stealth', semithick}]
  
  \node[grey] (se) at (0,0) {$s$};
  \node[grey] (re) at (0,2) {$r$};

  \node[grey] (sf) at (4,0) {$q$};
  \node[grey] (rf) at (4,2) {$t$};

  \node[clear] (edge) at (2,1) {$\tau(e \alpha) = \beta f$};
  \node[clear] (edge) at (-3,1) {$\mu \colon \beta \llarc{e}{f} \alpha$};

  \draw[to] (se) to node[auto] {$e$} (re); 
  \draw[to] (sf) to node[auto,swap] {$f$} (rf); 
  \draw[to,snake it] (se) to node[auto,swap] {$\beta$} (sf);
  \draw[to,snake it] (re) to node[auto] {$\alpha$} (rf);   
\end{tikzpicture}
\end{center}
\caption{Illustration of the relationship between paths over $E$ and labeled edges in $E_\tau$.  
For each $e\in E^1$ and $\alpha\in E^{k-1}$ for which $\range_E(e) = \source_E(\alpha)$,
the relation $\tau(e\alpha) = \beta f$ is encoded in $E_\tau$ as the labeled edge $\mu\colon\beta\llarc{e}{f}\alpha$.
The respective sources and ranges in $E$ must be of the form
$e\colon s\to r$, $\alpha\in E^{k-1}_{r\to t}$, $\beta\in E^{k-1}_{s\to q}$, and $f\colon q\to t$
for appropriate $s,r,q,t\in E^0$,
as shown in the diagram on the right.
}
\label{fig_Etau}
\end{figure}

\begin{definition}\label{def_Etau} 
Let $k \in \N$ and let $\tau \in \Perm (E^k)$
be an endpoint-fixing permutation. Define a graph $E_\tau = (E_\tau^0,E_\tau^1,\source_\tau,\range_\tau)$ by
\begin{displaymath}
\begin{array}{l c l}
E_\tau^0 = E^{k-1}  & \qquad &
\source_\tau(\mu) = \tau(\mu)_1 \cdots \tau(\mu)_{k-1} \\
E_\tau^1 = E^{k}  & &
\range_\tau(\mu) = \mu_2 \cdots \mu_{k}
\end{array}.
\end{displaymath}
Define $\LL_1, \LL_2 \colon E_\tau^1 \to E^1$ by
\begin{displaymath}
\LL_1(\mu) = \mu_1
\qquad \textnormal{ and } \qquad 
\LL_2(\mu) = \tau(\mu)_{k},
\end{displaymath}
and define a labeling $\LL_\tau \colon E_\tau^1 \to E^1 \times E^1$ by $\LL_\tau(\mu) = [\LL_1(\mu),\LL_2(\mu)]$. The \emph{permutation graph} of $E$ and $\tau$ is defined as the labeled graph $(E_\tau,\LL_\tau)$. This will be said to be a permutation graph of $E$ at \emph{level} $k$.
\end{definition}

A permutation is exactly determined by the corresponding permutation graph, since the permutation graph $(E_\tau,\LL_\tau)$ contains an edge  
labeled $[e,f]$ from $\beta \in E^{k-1}$ to $\alpha \in E^{k-1}$ if and only if
$\tau(e\alpha) = (\beta f)$. This is illustrated in Figure \ref{fig_Etau}.
The permutation graph defined above is introduced as a key tool 
for investigations of permutative endomorphisms. It constitutes a significant improvement to the graphical devices 
constructed ad hoc in \cite{conti_szymanski_labeled_trees} and \cite{chs_endomorphisms_of_graph_algebras}.


\begin{lem}\label{lem_Etau_properties}
Let $E$ and $\tau$ be as in Definition \ref{def_Etau} above.
\begin{enumerate}
\item 
Let $e \in E^1$. For each edge $\mu\colon \alpha \llarc{e}{f} \beta$ in $E_\tau$,  $\source_E(\beta) = \source_E(e)$ and $\source_E(\alpha) = \range_E(e)$. \label{lem_Etau_properties_L1}
\item For every edge $e\colon s\to r$ in $E^1$ and $\alpha\in E^{k-1}_{r\to\ast}$ there is a unique 
  edge $\mu\colon \beta\llarc{e}{f}\alpha$ of $(E_\tau,\LL_\tau)$.
\label{lem_Etau_properties_receive}
\item Let $f \in E^1$. For each edge $\mu\colon \beta \llarc{e}{f} \alpha$ in $E_\tau$, $\range_E(\beta) = \source_E(f)$ and $\range_E(\alpha) = \range_E(f)$. \label{lem_Etau_properties_L2}
\item For every edge $f\colon q\to t$ in $E^1$ and $\beta\in E^{k-1}_{\ast\to q}$ there is a unique   
edge $\mu\colon \beta\llarc{e}{f}\alpha$ of $(E_\tau,\LL_\tau)$.
\label{lem_Etau_properties_emit}  
\end{enumerate}
\end{lem}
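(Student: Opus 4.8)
The four statements are really two pairs of assertions, each pair consisting of a "source/range compatibility" claim and a corresponding "existence and uniqueness" claim. The plan is to unwind Definition \ref{def_Etau} directly in each case, keeping careful track of which coordinate of a length-$k$ path is being read off by $\LL_1$, $\LL_2$, $\source_\tau$ and $\range_\tau$, and to use the single nontrivial external input, namely that $\tau$ is endpoint-fixing. For \enumref{lem_Etau_properties_L1}: an edge $\mu\colon\alpha\llarc{e}{f}\beta$ in $E_\tau$ means, by the encoding in Figure \ref{fig_Etau} and the remark following the definition, that $\mu\in E^k$ with $\LL_1(\mu)=\mu_1=e$, $\range_\tau(\mu)=\mu_2\cdots\mu_k=\beta$ and $\source_\tau(\mu)=\tau(\mu)_1\cdots\tau(\mu)_{k-1}=\alpha$ (note the labelling convention puts the range of the edge first in the ``$\beta\llarc{e}{f}\alpha$'' picture versus here; I would state the convention once and be consistent). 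Since $\mu=e\,\beta$ is a genuine path in $E$, concatenation forces $\source_E(\beta)=\range_E(\mu_1)=\range_E(e)$ — wait, one must be careful: $\source_E(\beta)$ is $\source_E(\mu_2)=\range_E(\mu_1)=\range_E(e)$, while $\source_E(e)=\source_E(\mu)=\source_E(\tau(\mu))$ because $\tau$ is endpoint-fixing, and $\source_E(\tau(\mu))=\source_E(\tau(\mu)_1\cdots\tau(\mu)_{k-1})=\source_E(\alpha)$. So I get $\source_E(\alpha)=\source_E(e)$ and $\source_E(\beta)=\range_E(e)$; I will match these against whichever orientation the final LaTeX fixes, but this is exactly statement \enumref{lem_Etau_properties_L1} up to that bookkeeping.

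For \enumref{lem_Etau_properties_L2} the argument is the mirror image, now reading the last coordinate: $\LL_2(\mu)=\tau(\mu)_k=f$, and since $\tau(\mu)=\beta' f$ is a path in $E$ (here $\beta'=\tau(\mu)_1\cdots\tau(\mu)_{k-1}$), we get $\source_E(f)=\range_E(\beta')$ and $\range_E(f)=\range_E(\tau(\mu))=\range_E(\mu)=\range_E(\mu_2\cdots\mu_k)=\range_E(\alpha)$, again using endpoint-fixing for the middle equality. Identifying $\beta'$ with the node called $\beta$ in the statement (this is precisely the content of the remark that $\tau$ is determined by its permutation graph: the edge labelled $[e,f]$ from $\beta$ to $\alpha$ is the encoding of $\tau(e\alpha)=\beta f$), one reads off $\range_E(\beta)=\source_E(f)$ and $\range_E(\alpha)=\range_E(f)$.

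For the existence-and-uniqueness parts \enumref{lem_Etau_properties_receive} and \enumref{lem_Etau_properties_emit}: given $e\colon s\to r$ and $\alpha\in E^{k-1}_{r\to\ast}$, the path $e\alpha$ lies in $E^k$ (the ranges match because $\range_E(e)=r=\source_E(\alpha)$), so $\tau(e\alpha)$ is a well-defined element of $E^k$; write it as $\beta f$ with $\beta=\tau(e\alpha)_1\cdots\tau(e\alpha)_{k-1}\in E^{k-1}$ and $f=\tau(e\alpha)_k\in E^1$. Then $\mu:=e\alpha$ is an edge of $E_\tau$ with $\LL_1(\mu)=e$, $\LL_2(\mu)=f$, $\range_\tau(\mu)=\alpha$ and $\source_\tau(\mu)=\beta$, giving existence; uniqueness is immediate because any edge $\mu'$ with $\LL_1(\mu')=e$ and $\range_\tau(\mu')=\alpha$ must satisfy $\mu'_1=e$ and $\mu'_2\cdots\mu'_k=\alpha$, i.e. $\mu'=e\alpha=\mu$. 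Statement \enumref{lem_Etau_properties_emit} is the dual: given $f\colon q\to t$ and $\beta\in E^{k-1}_{\ast\to q}$, the path $\beta f$ lies in $E^k$, so $\tau^{-1}(\beta f)$ is well-defined in $E^k$ — here I use that $\tau$ is a \emph{bijection} — and writing $\tau^{-1}(\beta f)=e\alpha$ with $e\in E^1$, $\alpha\in E^{k-1}$ produces the required edge, with uniqueness following from injectivity of $\tau^{-1}$ together with the fact that $\source_\tau$ and $\LL_2$ together recover $\tau(\mu)=\beta f$.

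The only place demanding any care — and the one I would flag as the main obstacle — is the indexing bookkeeping: Definition \ref{def_Etau} is asymmetric in that $\range_\tau$ reads the last $k-1$ coordinates of $\mu$ itself while $\source_\tau$ reads the first $k-1$ coordinates of $\tau(\mu)$, and the pictorial convention $\beta\llarc{e}{f}\alpha$ in Figure \ref{fig_Etau} places $\beta=\source_\tau$ on the left. Getting every ``first/last $k-1$'' and every ``$\mu$ versus $\tau(\mu)$'' on the correct side, and invoking the endpoint-fixing property $\source_E(\tau(\mu))=\source_E(\mu)$, $\range_E(\tau(\mu))=\range_E(\mu)$ at exactly the two spots where the source/range of $\alpha$ or $\beta$ has to be transported between $\mu$ and $\tau(\mu)$, is the whole content. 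Once the conventions are pinned down, each of the four items is a one-line verification.
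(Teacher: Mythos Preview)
Your argument is correct and follows exactly the approach the paper takes: parts \enumref{lem_Etau_properties_L1} and \enumref{lem_Etau_properties_L2} are read off from the definitions together with the endpoint-fixing property of $\tau$, while \enumref{lem_Etau_properties_receive} and \enumref{lem_Etau_properties_emit} come from the bijectivity of $\tau$ (the paper compresses all of this into two sentences and a reference to Figure~\ref{fig_Etau}). You have also correctly identified the notational slip in part \enumref{lem_Etau_properties_L1}, where the edge should read $\mu\colon \beta \llarc{e}{f} \alpha$ to be consistent with Figure~\ref{fig_Etau} and the other three parts; with that correction your computations match the asserted conclusions exactly.
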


\begin{proof}
The statements \enumref{lem_Etau_properties_L1} and \enumref{lem_Etau_properties_L2} follow from the requirement that $\tau$ is endpoint-fixing as seen in Figure \ref{fig_Etau}. 
Bijectivity of $\tau$ entails that $e,\alpha$ uniquely determines $\beta,f$, and vice versa, whereby \enumref{lem_Etau_properties_receive} and \enumref{lem_Etau_properties_emit}
follow.
%
\end{proof}

\noindent The following corollary is an immediate consequence of Lemma \ref{lem_Etau_properties} \enumref{lem_Etau_properties_L1} and \enumref{lem_Etau_properties_L2}. 

\begin{cor}\label{cor_Etau_resolving}
Let $E$ and $\tau$ be as in Definition \ref{def_Etau} above. Then $(E_\tau,\LL_1)$ is left-resolving and $(E_\tau,\LL_2)$ is right-resolving.
\end{cor}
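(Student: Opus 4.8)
The plan is to derive the corollary directly from the two parts of Lemma~\ref{lem_Etau_properties} that the statement cites, namely \enumref{lem_Etau_properties_L1} and \enumref{lem_Etau_properties_L2}. Recall the definitions: $\LL_1(\mu) = \mu_1$ is the first edge of the path $\mu \in E^k = E_\tau^1$, and $\LL_2(\mu) = \tau(\mu)_k$ is the last edge of $\tau(\mu)$. The key observation is that the source and range maps of $E_\tau$ are ``overlap'' maps on length-$(k-1)$ paths: $\range_\tau(\mu) = \mu_2\cdots\mu_k$ drops the first edge of $\mu$, while $\source_\tau(\mu) = \tau(\mu)_1\cdots\tau(\mu)_{k-1}$ drops the last edge of $\tau(\mu)$.

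First I would prove that $(E_\tau,\LL_1)$ is left-resolving. Suppose $\mu,\mu' \in E_\tau^1$ satisfy $\range_\tau(\mu) = \range_\tau(\mu')$ and $\LL_1(\mu) = \LL_1(\mu')$. The first condition gives $\mu_2\cdots\mu_k = \mu'_2\cdots\mu'_k$, so $\mu_i = \mu'_i$ for $2 \le i \le k$; the second gives $\mu_1 = \mu'_1$. Hence $\mu = \mu'$ as paths in $E^k$, and $(E_\tau,\LL_1)$ is left-resolving. Note this half is actually purely formal and does not even use Lemma~\ref{lem_Etau_properties} --- it is immediate from the definitions --- but I would phrase it so that the symmetry with the other half is visible.

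Next I would prove that $(E_\tau,\LL_2)$ is right-resolving. Suppose $\mu,\mu' \in E_\tau^1$ satisfy $\source_\tau(\mu) = \source_\tau(\mu')$ and $\LL_2(\mu) = \LL_2(\mu')$. Writing $\nu = \tau(\mu)$ and $\nu' = \tau(\mu')$, the first condition says $\nu_1\cdots\nu_{k-1} = \nu'_1\cdots\nu'_{k-1}$, so $\nu_i = \nu'_i$ for $1 \le i \le k-1$, and the second says $\nu_k = \LL_2(\mu) = \LL_2(\mu') = \nu'_k$. Therefore $\tau(\mu) = \nu = \nu' = \tau(\mu')$, and since $\tau$ is a bijection, $\mu = \mu'$. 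Thus $(E_\tau,\LL_2)$ is right-resolving.

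Strictly speaking neither direction requires the endpoint-fixing hypothesis in the form recorded in Lemma~\ref{lem_Etau_properties}; both reduce to the fact that a length-$(k-1)$ path together with one extra edge determines a length-$k$ path, plus bijectivity of $\tau$ for the second part. I do not anticipate any real obstacle here --- the only thing to be careful about is matching the direction conventions (left-resolving concerns edges with a common \emph{range}, right-resolving concerns edges with a common \emph{source}, as set up in the definitions of those terms earlier in the excerpt), and making sure the reader sees why the cited parts of Lemma~\ref{lem_Etau_properties} guarantee that the ``extended'' paths $\mu$ and $\tau(\mu)$ are genuine paths in $E$ with the correct incidences, so that the concatenations above make sense. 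Given that, the corollary follows in a few lines.
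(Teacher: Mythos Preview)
Your argument is correct and matches the paper's approach: the paper offers no proof beyond declaring the corollary an immediate consequence of Lemma~\ref{lem_Etau_properties}, and your explicit unpacking of the definitions of $\range_\tau$, $\source_\tau$, $\LL_1$, $\LL_2$ together with bijectivity of $\tau$ is exactly the intended content. Your observation that the endpoint-fixing hypothesis (parts \enumref{lem_Etau_properties_L1} and \enumref{lem_Etau_properties_L2}) is not actually needed is right; indeed the uniqueness statements in parts \enumref{lem_Etau_properties_receive} and \enumref{lem_Etau_properties_emit} of the lemma are already precisely the left- and right-resolving conditions, and your direct argument simply reproves those from scratch.
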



In \cite{conti_szymanski_labeled_trees}, similar conditions were
developed to deal specifically with the permutative endomorphisms of
the Cuntz-algebras $\OO_n$. In that case, the conditions were phrased
in terms of certain labeled trees carrying the same information as the
permutation graph. It was shown that it was sufficient to consider
certain configurations of these labeled trees when searching for
permutative endomorphisms, and the conditions given above generalize
those results.  It is straightforward to check that the properties
given in Lemma \ref{lem_Etau_properties} are also sufficient to
characterize permutation graphs:

\begin{prop}\label{prop_sufficient}
  Let $(F,\LL)$ be a labeled graph with vertex set $F^0 = E^{k-1}$ and labels $\LL(F^1) \subseteq E^1 \times E^1$. Then $(F,\LL)$ is the permutation graph of some
  endpoint-fixing permutation of $E^k$ if and only if $(F,\LL)$
  satisfies the conditions given in Lemma \ref{lem_Etau_properties}.
\end{prop}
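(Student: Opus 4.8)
The plan is to prove the two directions separately. The forward direction is already done: Lemma \ref{lem_Etau_properties} shows that a genuine permutation graph $(E_\tau,\LL_\tau)$ satisfies conditions \enumref{lem_Etau_properties_L1}--\enumref{lem_Etau_properties_emit}. So the substantive content is the converse: given a labeled graph $(F,\LL)$ with $F^0 = E^{k-1}$ and $\LL(F^1) \subseteq E^1\times E^1$ satisfying those four conditions, construct an endpoint-fixing permutation $\tau\in\Perm(E^k)$ with $(E_\tau,\LL_\tau) = (F,\LL)$.

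First I would extract from the conditions the combinatorial data needed to define a candidate $\tau$. Condition \enumref{lem_Etau_properties_receive} says that for each $e\colon s\to r$ and each $\alpha\in E^{k-1}_{r\to\ast}$ there is a unique edge $\mu\colon\beta\llarc{e}{f}\alpha$ in $F$; the pair $(e,\alpha)$ ranges exactly over all factorizations of paths in $E^k$ as (first edge)(last $k-1$ edges), so this uniqueness gives a well-defined map $E^k\to E^{k-1}\times E^1$, $e\alpha\mapsto(\beta,f)$. Composing with concatenation $(\beta,f)\mapsto\beta f$ — which is legal because condition \enumref{lem_Etau_properties_L1} forces $\source_E(\beta)=\source_E(e)$ so $\beta f\in E^{k-1}\cdot E^1$, and condition \enumref{lem_Etau_properties_L2} together with the requirement that $\mu$ is an edge forces $\range_E(\beta)=\source_E(f)$ so $\beta f$ is actually a path in $E^k$ — I get a map $\tau\colon E^k\to E^k$, $\tau(e\alpha)=\beta f$. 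By construction this map is endpoint-fixing: \enumref{lem_Etau_properties_L1} gives $\source_E(\beta f)=\source_E(\beta)=\source_E(e)=\source_E(e\alpha)$, and \enumref{lem_Etau_properties_L2} gives $\range_E(\beta f)=\range_E(f)=\range_E(\alpha)=\range_E(e\alpha)$.

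Next I would show $\tau$ is a bijection. Condition \enumref{lem_Etau_properties_emit} says that for each $f\colon q\to t$ and each $\beta\in E^{k-1}_{\ast\to q}$ there is a unique edge $\mu\colon\beta\llarc{e}{f}\alpha$ in $F$; here $(\beta,f)$ ranges over all factorizations of paths in $E^k$ as (first $k-1$ edges)(last edge), so this defines the inverse assignment $E^k\to E^1\times E^{k-1}$, $\beta f\mapsto(e,\alpha)$, which composed with concatenation gives a map $\tau'\colon E^k\to E^k$. The point is that $\tau'(\tau(e\alpha))=e\alpha$ and $\tau(\tau'(\beta f))=\beta f$ both hold because both compositions are governed by the \emph{same} edge $\mu\colon\beta\llarc{e}{f}\alpha$ of $F$: the edge $\mu$ simultaneously witnesses $\tau(e\alpha)=\beta f$ (via \enumref{lem_Etau_properties_receive}) and $\tau'(\beta f)=e\alpha$ (via \enumref{lem_Etau_properties_emit}), and the uniqueness clauses guarantee there is no ambiguity. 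Hence $\tau'=\tau^{-1}$ and $\tau\in\Perm(E^k)$.

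Finally I would verify $(E_\tau,\LL_\tau)=(F,\LL)$. Both are labeled graphs on the vertex set $E^{k-1}$ with edge set $E^k$, so it suffices to check that they have the same source, range, and labeling on each edge $\mu=e\alpha\in E^k$. By Definition \ref{def_Etau}, $\range_\tau(\mu)=\mu_2\cdots\mu_k=\alpha$ and $\LL_1(\mu)=\mu_1=e$; writing $\tau(\mu)=\beta f$ as above, $\source_\tau(\mu)=\tau(\mu)_1\cdots\tau(\mu)_{k-1}=\beta$ and $\LL_2(\mu)=\tau(\mu)_k=f$. On the other hand, $\mu$ as an edge of $F$ runs from $\beta$ to $\alpha$ with label $[e,f]$, precisely because $\mu$ is the unique edge of $F$ of the form $\beta\llarc{e}{f}\alpha$ supplied by \enumref{lem_Etau_properties_receive}. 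So the two labeled graphs agree edge by edge, and the proof is complete.

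I expect the main obstacle to be purely bookkeeping: keeping straight which of the four conditions supplies which half of the bijection and confirming that all the concatenations $\beta f$ and $e\alpha$ land in $E^k$ rather than merely in $E^1\cdot E^{k-1}$ as sequences. The one place requiring a moment's care is verifying that the edge set of $F$ is in natural bijection with $E^k$ via both factorizations $e\alpha$ and $\beta f$ simultaneously — this is exactly the compatibility that makes $\tau$ and $\tau'$ mutually inverse — but Figure \ref{fig_Etau} already encodes this correspondence, so it should go through without difficulty.
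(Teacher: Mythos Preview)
Your proposal is correct and is precisely the straightforward verification the paper has in mind; the paper itself does not supply a proof beyond the remark that it is ``straightforward to check,'' and your argument fills in exactly those details. The only place to tighten is the sentence ``Both are labeled graphs on the vertex set $E^{k-1}$ with edge set $E^k$'': a priori $F^1$ is an abstract set, not literally $E^k$, so what you are really showing is that the map $e\alpha\mapsto\mu$ from condition~\enumref{lem_Etau_properties_receive} is a bijection $E^k\to F^1$ under which the labeled-graph structure of $F$ matches that of $E_\tau$ --- a point you already flag in your final paragraph.
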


%

The following proposition shows that $(E_\tau,\LL_\tau)$ gives two
alternate presentations of the edge shift presented by $E$.

\begin{prop}\label{prop_same_shift}
  The two-sided shift spaces $\X_E$, $\X_{(E_\tau, \LL_1)}$, and
  $\X_{(E_\tau, \LL_2)}$ presented by, respectively, $E$, $(E_\tau,
  \LL_1)$, and $(E_\tau, \LL_2)$ are identical.
\end{prop}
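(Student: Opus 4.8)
The plan is to show that all three presentations describe the same two-sided shift space by identifying the underlying edge shifts and the labelings.

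First I would observe that $E_\tau$ has vertex set $E^{k-1}$ and edge set $E^k$, and that an edge $\mu \in E^k = E_\tau^1$ goes from $\source_\tau(\mu) = \tau(\mu)_1\cdots\tau(\mu)_{k-1}$ to $\range_\tau(\mu) = \mu_2\cdots\mu_k$. For the labeling $\LL_1$ I would argue directly that the sliding block code $\phi_{\LL_1}\colon \X_{E_\tau}\to\X_{(E_\tau,\LL_1)}$ is a conjugacy onto $\X_E$. Given a bi-infinite path $\cdots\mu^{(-1)}\mu^{(0)}\mu^{(1)}\cdots$ in $\X_{E_\tau}$, consecutive edges satisfy $\range_\tau(\mu^{(i)}) = \source_\tau(\mu^{(i+1)})$, i.e.\ $\mu^{(i)}_2\cdots\mu^{(i)}_k = \tau(\mu^{(i+1)})_1\cdots\tau(\mu^{(i+1)})_{k-1}$; applying $\LL_1$ reads off $\mu^{(i)}_1$, so the image sequence is $\cdots\mu^{(-1)}_1\mu^{(0)}_1\mu^{(1)}_1\cdots$. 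The key point is that from the constraint, each $\mu^{(i)}$ is recovered as $\mu^{(i)} = \mu^{(i)}_1\mu^{(i+1)}_1\cdots\mu^{(i+k-1)}_1$, since $\range_E(\mu^{(i)}_1) = \source_E(\mu^{(i)}_2) = \cdots$. Thus $\phi_{\LL_1}$ is a bijection between $\X_{E_\tau}$ and $\X_E$ (every bi-infinite path in $E$ arises this way, using that $E$ has no sinks or sources), and it commutes with the shift, giving $\X_{(E_\tau,\LL_1)} = \X_E$ as subsets of $(E^1)^{\Z}$.

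Next I would handle $\LL_2$. Here I would use the unitary/permutation structure: applying $\tau$ to each window gives another bi-infinite path. Concretely, from a path in $\X_{E_\tau}$ I would show that $\cdots\tau(\mu^{(-1)})\tau(\mu^{(0)})\tau(\mu^{(1)})\cdots$ — suitably interpreted — is again a legal path, and that $\LL_2(\mu^{(i)}) = \tau(\mu^{(i)})_k$ plays for $\tau(\mu^{(i)})$ the role that $\mu^{(i)}_1$ played for $\mu^{(i)}$ but at the other end of the window. More carefully: the constraint $\range_\tau(\mu^{(i)}) = \source_\tau(\mu^{(i+1)})$ says $\mu^{(i)}_2\cdots\mu^{(i)}_k = \tau(\mu^{(i+1)})_1\cdots\tau(\mu^{(i+1)})_{k-1}$, so setting $\nu^{(i)} := \tau(\mu^{(i)})$ we get $\nu^{(i+1)}_1\cdots\nu^{(i+1)}_{k-1} = \mu^{(i)}_2\cdots\mu^{(i)}_k$, and combining with the analogous relation for $\nu^{(i)}$ one sees the edges $\nu^{(i)}$ overlap consistently: $\nu^{(i)}_2\cdots\nu^{(i)}_k$ and $\nu^{(i+1)}_1\cdots\nu^{(i+1)}_{k-1}$ agree because both equal $\tau$ of overlapping data. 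The reading $\LL_2(\mu^{(i)}) = \nu^{(i)}_k$ then produces the sequence $\cdots\nu^{(-1)}_k\nu^{(0)}_k\cdots$, which by the same telescoping argument as before reconstructs the path $\cdots\nu^{(-1)}\nu^{(0)}\cdots$ in $\X_E$; conversely every path in $\X_E$ is obtained. Hence $\X_{(E_\tau,\LL_2)} = \X_E$ as well.

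The main obstacle I expect is the $\LL_2$ case: verifying that the locally-defined map $\mu^{(i)}\mapsto\tau(\mu^{(i)})$ glues into a well-defined shift-commuting bijection requires checking that the overlap conditions are preserved under $\tau$, which is exactly where endpoint-fixing and the structure from Lemma \ref{lem_Etau_properties} (parts \enumref{lem_Etau_properties_L1}--\enumref{lem_Etau_properties_L2}) and Corollary \ref{cor_Etau_resolving} enter. Once the bookkeeping with indices on the windows $\mu^{(i)}_1\cdots\mu^{(i)}_k$ and $\tau(\mu^{(i)})_1\cdots\tau(\mu^{(i)})_k$ is set up carefully, the equality of the three shift spaces follows; the $\LL_1$ case is essentially the standard identification of an edge shift with its $k$-block presentation, while the $\LL_2$ case is its image under the conjugacy induced by $\tau$.
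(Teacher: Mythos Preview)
Your argument contains a genuine error. The formula $\mu^{(i)} = \mu^{(i)}_1\mu^{(i+1)}_1\cdots\mu^{(i+k-1)}_1$ is false for general $\tau$: the path condition in $E_\tau$ says $\range_\tau(\mu^{(i)}) = \source_\tau(\mu^{(i+1)})$, i.e.\ $\mu^{(i)}_{2\ldots k} = \tau(\mu^{(i+1)})_{1\ldots k-1}$, so $\mu^{(i)}_2 = \tau(\mu^{(i+1)})_1$, which is \emph{not} $\mu^{(i+1)}_1 = \LL_1(\mu^{(i+1)})$ unless $\tau$ happens to fix the first coordinate. In other words, $(E_\tau,\LL_1)$ is the higher $k$-block presentation of $\X_E$ only when $\tau=\Id$; the paper remarks on exactly this after the proposition. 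Consequently your claim that $\phi_{\LL_1}$ is a conjugacy is unjustified and in fact false in general: by Lemma~\ref{lem_synch} and Lemma~\ref{lem_Pmu}, invertibility of $\phi_{\LL_1}$ is equivalent to $(E_\tau,\LL_1)$ being left-synchronizing, which is a nontrivial hypothesis (indeed, it is part of the automorphism criterion of Theorem~\ref{thm_automorphism}). Your $\LL_2$ argument has the same defect: you assert $\nu^{(i)}_{2\ldots k} = \nu^{(i+1)}_{1\ldots k-1}$, but $\nu^{(i)}_{2\ldots k} = \tau(\mu^{(i)})_{2\ldots k}$ while $\nu^{(i+1)}_{1\ldots k-1} = \mu^{(i)}_{2\ldots k}$, and these do not agree in general.

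The good news is that the proposition asks only for equality of the \emph{presented} shift spaces, i.e.\ equality of the images $\phi_{\LL_1}(\X_{E_\tau})$ and $\phi_{\LL_2}(\X_{E_\tau})$ with $\X_E$; injectivity is irrelevant. The paper's proof therefore works at the level of languages: it shows $\LL_1(E_\tau^n) = E^n$ for all $n$ by induction, using Lemma~\ref{lem_Etau_properties}\enumref{lem_Etau_properties_L1} to see that $\LL_1$-labels of $E_\tau$-paths are $E$-paths, and Lemma~\ref{lem_Etau_properties}\enumref{lem_Etau_properties_receive} (left-resolvancy plus existence) to extend any $E$-path backwards to an $E_\tau$-path with that $\LL_1$-label. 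The $\LL_2$ case is symmetric via parts \enumref{lem_Etau_properties_L2} and \enumref{lem_Etau_properties_emit}. If you drop the bijectivity claims and argue only surjectivity of $\phi_{\LL_i}$ onto $\X_E$ together with containment of the image in $\X_E$, your outline can be repaired along these lines.
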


\begin{proof}
It is sufficient to prove that the three labeled graphs have the same language.
As any other labeling map, $\LL_1$ can be extended naturally and considered as a map from $E_\tau^\ast$ to $E^\ast$, and using this, the language of $\X_{(E_\tau, \LL_1)}$ is $\LL_1(E_\tau^\ast)$.
By Lemma \ref{lem_Etau_properties} \enumref{lem_Etau_properties_receive}, $\LL_1(E_\tau^1) = E^1$. Let $n \geq 1$ be given and assume that $\LL_1(E_\tau^n) = E^n$. Let $e \in E^0$. By Lemma \ref{lem_Etau_properties} \enumref{lem_Etau_properties_L1}  and \enumref{lem_Etau_properties_receive}, $ew \in E^{n+1}$ if and  only if $ew \in \LL_1(E_\tau^{n+1})$.   
  By induction, $\LL_1(E_\tau^\ast) = E^\ast$, and an analogous argument proves that $\LL_2(E_\tau^\ast) = E^\ast$.
%
\end{proof}

\noindent The presentations $(E_\tau,
  \LL_1)$, and $(E_\tau, \LL_2)$ do not in general correspond to any
standard presentation of $\X_E$ known to the authors. However, if the
permutation is the identity, then the permutation graph is simply a higher
block presentation of the original edge shift (see e.g.\ \cite{lind_marcus}).

\begin{example} \label{ex_golden_mean}
Consider the graph $E$ shown in Figure \ref{fig_golden_mean}, and let
\begin{displaymath}
\tau = (111 , 132, 321)(113, 323) \in \Perm(E^3).
\end{displaymath}
This is the endpoint-fixing permutation considered in \cite[Example
6.5]{chs_endomorphisms_of_graph_algebras}. Its permutation graph
$(E_\tau,\LL_\tau)$ is shown in Figure \ref{fig_golden_mean_perm}.
It is easy to check that $E$, $(E_\tau,
\LL_1)$, and $(E_\tau, \LL_2)$ all present the same two-sided shift space,
as required by Proposition \ref{prop_same_shift}.

\begin{figure}
\begin{center}
\begin{tikzpicture}
  [bend angle=10,
   clearRound/.style = {circle, inner sep = 0pt, minimum size = 17mm},
   clear/.style = {rectangle, minimum width = 5 mm, minimum height = 5 mm, inner sep = 0pt},  
   greyRound/.style = {circle, draw, minimum size = 1 mm, inner sep =
      0pt, fill=black!10},
   grey/.style = {rectangle, draw, minimum size = 6 mm, inner sep =
      1pt, fill=black!10},
    white/.style = {rectangle, draw, minimum size = 6 mm, inner sep =
      1pt},
   to/.style = {->, shorten <= 1 pt, >=stealth', semithick}]  
    
  \node[grey] (v1) at (-4,0) {$v_1$};
  \node[grey] (v2) at (-2,0) {$v_2$};

  \draw[to,loop left] (v1) to node[auto] {$1$} (v1);
  \draw[to,bend left] (v1) to node[auto] {$3$} (v2);
  \draw[to,bend left] (v2) to node[auto] {$2$} (v1);
\end{tikzpicture}
\end{center}
\caption{The graph $E$ considered in Example \ref{ex_golden_mean}.} 
\label{fig_golden_mean}
\end{figure}
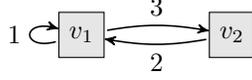

\begin{figure}
\begin{center}
\begin{tikzpicture}
  [bend angle=10,
   clearRound/.style = {circle, inner sep = 0pt, minimum size = 17mm},
   clear/.style = {rectangle, minimum width = 5 mm, minimum height = 5 mm, inner sep = 0pt},  
   greyRound/.style = {circle, draw, minimum size = 1 mm, inner sep =
      0pt, fill=black!10},
   grey/.style = {rectangle, draw, minimum size = 6 mm, inner sep =
      1pt, fill=black!10},
    white/.style = {rectangle, draw, minimum size = 6 mm, inner sep =
      1pt},
   to/.style = {->, shorten <= 1 pt, >=stealth', semithick},
   lbl/.style = {font=\footnotesize,inner sep=0.5mm}
   ]
   
  \node[grey] (11) at (0,1.5) {$11$};
  \node[grey] (32) at (0,0) {$32$};
  \node[grey] (13) at (0,-2) {$13$};
  \node[grey] (21) at (4,1.5) {$21$};
  \node[grey] (23) at (4,-2) {$23$};

  \node[rectangle, draw, dashed, minimum height = 3cm, minimum width = 1.5 cm] (v1) at (0,0.75) {};
  \node[clear] (s11) at (0,2.5) {$E^2_{v_1 \to v_1}$};
    \node[rectangle, draw, dashed, minimum height = 1.5cm, minimum width = 1.5 cm] (v1) at (13) {};
  \node[clear] (s12) at (0,-3) {$E^2_{v_1 \to v_2}$};
 \node[rectangle, draw, dashed, minimum height = 1.5cm, minimum width = 1.5 cm] (v2) at (21) {};
  \node[clear] (s21) at (4,2.5) {$E^2_{v_2 \to v_1}$};
 \node[rectangle, draw, dashed, minimum height = 1.5cm, minimum width = 1.5 cm] (v2) at (21) {};
 \node[clear] (s22) at  (4,-3) {$E^2_{v_2 \to v_2}$};
 \node[rectangle, draw, dashed, minimum height = 1.5cm, minimum width = 1.5 cm] (v2) at (23) {};
 
   
  \draw[to,loop above] (32) to node[lbl,auto] {$[1,1]$} (32);
  \draw[to] (32) to node[lbl,midway,fill=white] {$[1,3]$} (13);
  \draw[to, bend left = 80] (13) to node[lbl,auto] {$[1,2]$} (11);
  
  \draw[to, bend left] (11) to node[lbl,midway,fill=white] {$[3,1]$} (21);
  \draw[to] (11) to node[lbl,near end, fill=white] {$[3,3]$} (23);

  \draw[to, bend left] (21) to node[lbl,midway,fill=white] {$[2,1]$} (11);
  \draw[to] (21) to node[lbl,near start, fill=white] {$[2,3]$} (13);
  \draw[to] (23) to node[lbl,midway,fill=white] {$[2,2]$} (32);
\end{tikzpicture}
\end{center}
\caption{The permutation graph $(E_\tau,\LL_\tau)$ considered in Example \ref{ex_golden_mean}. The dashed boxes contain vertices corresponding to paths in $E^2$ with the specified sources and ranges.} 
\label{fig_golden_mean_perm}
\end{figure}
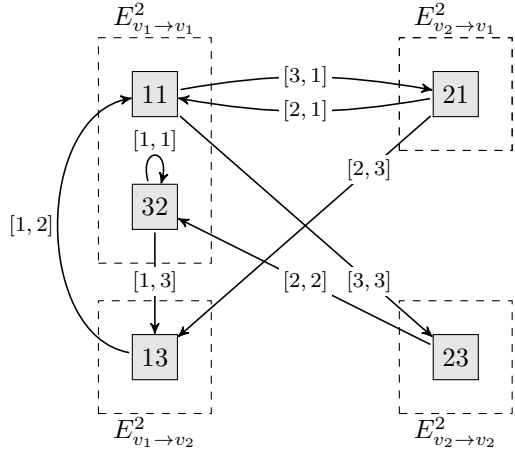
\end{example}

\subsection{Images via permutation graphs.}\label{sec_images}
Now, it will be shown how permutation graphs can be used
in direct computations involving permutative endomorphisms.

\begin{lem}\label{lem_image_Se}%
  Let $\tau \in \Perm(E^k)$ be endpoint-fixing.
  Let $e \in E^1$ and consider $\{\mu_1, \ldots, \mu_n\} = \LL_1^{-1}(e)
  \subseteq E_\tau^1$. For each $i$, let $\beta_i = \source_\tau(\mu_i)$, $\alpha_i = \range_\tau(\mu_i)$,
  and $f_i = \LL_2(\mu_i)$, i.e.\ $\mu_i\colon \beta_i \llarc{e}{f_i} \alpha_i$.
   Then
  \begin{displaymath}
    \lambda_\tau(S_e) = \sum_{i=1}^n S_{\beta_i} S_{f_i} S_{\alpha_i}^\ast. 
  \end{displaymath} 
\end{lem}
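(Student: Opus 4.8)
The plan is to start from the definitions $\lambda_\tau(S_e) = U_\tau S_e$ with $U_\tau = \sum_{\gamma \in E^k} S_{\tau(\gamma)} S_\gamma^\ast$, and expand the product. The idea is that multiplying by $S_e$ on the right picks out exactly those terms of $U_\tau$ whose index $\gamma$ begins with the edge $e$: since $S_\gamma^\ast S_e = S_{\gamma_2 \cdots \gamma_k}^\ast S_{\gamma_1}^\ast S_e$ and $S_{\gamma_1}^\ast S_e = P_{\range(e)}$ if $\gamma_1 = e$ and $0$ otherwise. So first I would write
\begin{displaymath}
\lambda_\tau(S_e) = \sum_{\gamma \in E^k} S_{\tau(\gamma)} S_\gamma^\ast S_e = \sum_{\substack{\gamma \in E^k \\ \gamma_1 = e}} S_{\tau(\gamma)} S_{\gamma_2 \cdots \gamma_k}^\ast P_{\range(e)}.
\end{displaymath}

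Next I would reindex this sum over the permutation graph. The paths $\gamma \in E^k$ with $\gamma_1 = e$ are precisely $\gamma = e\alpha$ with $\alpha \in E^{k-1}_{\range(e) \to \ast}$, and by Lemma \ref{lem_Etau_properties} \enumref{lem_Etau_properties_receive} these correspond bijectively to the edges $\mu_i \colon \beta_i \llarc{e}{f_i} \alpha_i$ of $(E_\tau,\LL_\tau)$ with $\LL_1(\mu_i) = e$; that is, $\mu_i = e\alpha_i = \gamma$ as a path in $E^k$, with $\alpha_i = \range_\tau(\mu_i)$, $\beta_i = \source_\tau(\mu_i)$, and $f_i = \LL_2(\mu_i)$. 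For such $\gamma = \mu_i$, Definition \ref{def_Etau} gives $\tau(\mu_i)_1 \cdots \tau(\mu_i)_{k-1} = \beta_i$ and $\tau(\mu_i)_k = f_i$, so $\tau(\gamma) = \beta_i f_i$ and hence $S_{\tau(\gamma)} = S_{\beta_i} S_{f_i}$. Also $S_{\gamma_2 \cdots \gamma_k}^\ast = S_{\alpha_i}^\ast$. Substituting yields
\begin{displaymath}
\lambda_\tau(S_e) = \sum_{i=1}^n S_{\beta_i} S_{f_i} S_{\alpha_i}^\ast P_{\range(e)}.
\end{displaymath}

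Finally I would discharge the trailing projection $P_{\range(e)}$. By Lemma \ref{lem_Etau_properties} \enumref{lem_Etau_properties_L1}, $\source_E(\alpha_i) = \range_E(e)$, so the initial projection of $S_{\alpha_i}$ is $P_{\range_E(\alpha_i)}$ — wait, more to the point, $S_{\alpha_i}^\ast S_{\alpha_i} = P_{\range(\alpha_i)}$ is not what is needed; rather I need $S_{\alpha_i}^\ast P_{\range(e)} = S_{\alpha_i}^\ast$, which holds because $P_{\range(e)} = P_{\source(\alpha_i)}$ dominates $S_{\alpha_i} S_{\alpha_i}^\ast$, hence $P_{\source(\alpha_i)} S_{\alpha_i} = S_{\alpha_i}$ and taking adjoints $S_{\alpha_i}^\ast P_{\source(\alpha_i)} = S_{\alpha_i}^\ast$. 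This gives the claimed formula. I do not anticipate a serious obstacle here; the only point requiring mild care is the bookkeeping of which terms survive the right-multiplication by $S_e$ and the clean translation of the indices $\gamma = e\alpha$ into permutation-graph data via Lemma \ref{lem_Etau_properties}, together with the harmless removal of the boundary projection. One should also note at the outset that the set $\LL_1^{-1}(e) = \{\mu_1,\dots,\mu_n\}$ is nonempty and finite because $E$ is finite and $\range(e)$ emits at least one edge (no sinks), so that the sum is well-defined.
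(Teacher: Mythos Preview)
Your proof is correct and follows exactly the same approach as the paper: expand $U_\tau S_e$, observe that only the summands with $\gamma_1 = e$ survive, and reinterpret the surviving terms via the permutation-graph data. The paper's proof is simply the compressed one-line version of your argument, omitting the explicit justification for absorbing $P_{\range(e)}$ that you spell out.
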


\begin{proof}
  By the definition of $\lambda_\tau$ and the construction of the permutation graph 
  \begin{displaymath}
    \lambda_\tau(S_e) 
    = \bigg( \sum_{\gamma \in E^k} S_{\tau(\gamma)} S_\gamma^\ast \bigg) S_e
    =  \sum_{(e \alpha) \in E^k} S_{\tau(e \alpha)} S_\alpha^\ast  
    =  \sum_{i=1}^n S_{\beta_i} S_{f_i} S_{\alpha_i}^\ast.
  \end{displaymath}
\end{proof}

\noindent This allows the permutation graph to be used to perform direct calculations, and additionally
as a visual tool in computations: 
To find the image of $S_e$, one simply needs to identify the edges in the permutation graph for which the first label is $e$ and construct the sum given above.

It is straightforward to extend Lemma \ref{lem_image_Se} to an arbitrary  path $e_1 \cdots e_n$ by applying the result to individual generators $S_{e_i}$ and using the relations of $C^\ast(E)$ to achieve cancellations. This leads to:

\begin{prop}\label{prop_action_on_path} 
  Let $\tau \in \Perm(E^k)$ be endpoint-fixing.
  Let $l \in \N$, and let $\gamma \in E^l$. Let $\{A_1, \ldots, A_n\}
  = \LL_1^{-1}(\gamma) \subseteq E_\tau^l$. For each $i$, let $\beta_i = \source_\tau(A_i)$,
  $\alpha_i = \range_\tau(A_i)$, and $\delta_i = \LL_2(A_i)$. Then
\begin{equation}\label{eq_action_on_path}
  \lambda_\tau(S_\gamma) = \sum_{i=1}^n S_{\beta_i} S_{\delta_i} S_{\alpha_i}^\ast. 
\end{equation} 
\end{prop}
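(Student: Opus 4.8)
The plan is to induct on the path length $l$, using Lemma \ref{lem_image_Se} as the base case ($l=1$, where $A_i = \mu_i$, $\beta_i = \source_\tau(\mu_i)$, $\alpha_i = \range_\tau(\mu_i)$, $\delta_i = \LL_2(\mu_i)$, and the claim is exactly the content of that lemma). For the inductive step, write $\gamma = \gamma' e$ with $\gamma' \in E^{l}$ and $e \in E^1$, so that $S_\gamma = S_{\gamma'} S_e$, and apply $\lambda_\tau$ multiplicatively: $\lambda_\tau(S_\gamma) = \lambda_\tau(S_{\gamma'}) \lambda_\tau(S_e)$. Substituting the inductive hypothesis for $\lambda_\tau(S_{\gamma'})$ and Lemma \ref{lem_image_Se} for $\lambda_\tau(S_e)$ produces a double sum over pairs $(A', \mu)$ where $A' \in \LL_1^{-1}(\gamma') \subseteq E_\tau^{l}$ and $\mu \in \LL_1^{-1}(e) \subseteq E_\tau^1$, with a typical term
\begin{displaymath}
  S_{\source_\tau(A')} S_{\LL_2(A')} S_{\range_\tau(A')}^\ast \, S_{\source_\tau(\mu)} S_{\LL_2(\mu)} S_{\range_\tau(\mu)}^\ast.
\end{displaymath}

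The heart of the argument is to show that the middle factor $S_{\range_\tau(A')}^\ast S_{\source_\tau(\mu)}$ is nonzero precisely when $\range_\tau(A') = \source_\tau(\mu)$ in $E_\tau^0 = E^{k-1}$, i.e.\ exactly when $A'$ and $\mu$ are composable into a path $A = A'\mu \in E_\tau^{l+1}$, and that in that case the term collapses to $S_{\source_\tau(A')} S_{\LL_2(A') \LL_2(\mu)} S_{\range_\tau(\mu)}^\ast = S_{\source_\tau(A)} S_{\LL_2(A)} S_{\range_\tau(A)}^\ast$. For the first point I would invoke the graph $C^\ast$-algebra relations: for paths $\nu, \nu' \in E^\ast$ one has $S_\nu^\ast S_{\nu'} \neq 0$ only if one is a prefix of the other; since here $\range_\tau(A'), \source_\tau(\mu) \in E^{k-1}$ have the same length, they must be equal, and then $S_{\range_\tau(A')}^\ast S_{\source_\tau(\mu)} = P_{\range_E(\range_\tau(A'))}$, which is absorbed on the right of $S_{\LL_2(A')}$ (because $\range_E(\LL_2(A')) = \range_E(\range_\tau(A'))$ by Lemma \ref{lem_Etau_properties} \enumref{lem_Etau_properties_L2}) and on the left of $S_{\source_\tau(\mu)}$. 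For the identifications $\source_\tau(A) = \source_\tau(A')$, $\range_\tau(A) = \range_\tau(\mu)$, $\LL_1(A) = \gamma' e = \gamma$, and $\LL_2(A) = \LL_2(A')\LL_2(\mu)$, I would just unwind the definitions of the source, range, and labeling maps on $E_\tau$ together with the fact that these maps extend to $E_\tau^\ast$ in the standard concatenation-respecting way.

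Finally I would check that the correspondence $(A', \mu) \mapsto A = A'\mu$ between composable pairs and paths in $E_\tau^{l+1}$ is a bijection onto $\LL_1^{-1}(\gamma)$: every $A \in E_\tau^{l+1}$ factors uniquely as (initial segment of length $l$)(last edge), the initial segment lies in $\LL_1^{-1}(\gamma')$ and the last edge in $\LL_1^{-1}(e)$ precisely when $\LL_1(A) = \gamma$, and composability is automatic for a genuine path. Reindexing the surviving terms of the double sum by $A$ then yields exactly the claimed formula \eqref{eq_action_on_path} at length $l+1$, completing the induction. The main obstacle is the cancellation bookkeeping in the middle: one must be careful that no spurious cross terms survive and that the projection $P_{\range_E(\range_\tau(A'))}$ is genuinely absorbed rather than merely commuted past, but this follows cleanly from the left/right-resolving range conditions recorded in Lemma \ref{lem_Etau_properties}. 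I would also remark that one could equally induct from the left, writing $\gamma = e\gamma'$ and using the right-resolving structure via $\LL_2$; the symmetric roles of $\LL_1$ and $\LL_2$ make either direction work.
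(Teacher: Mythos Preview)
Your proof is correct and follows exactly the approach the paper indicates: the paper merely states that it is ``straightforward to extend Lemma \ref{lem_image_Se} to an arbitrary path $e_1 \cdots e_n$ by applying the result to individual generators $S_{e_i}$ and using the relations of $C^\ast(E)$ to achieve cancellations,'' and your induction on $l$ carries this out in detail. The cancellation step you describe (that $S_{\range_\tau(A')}^\ast S_{\source_\tau(\mu)}$ vanishes unless the two length-$(k-1)$ words coincide, and is then the absorbable projection $P_{\range_E(\range_\tau(A'))}$ via Lemma \ref{lem_Etau_properties}\enumref{lem_Etau_properties_L2}) is precisely the intended ``relations of $C^\ast(E)$'' argument.
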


\subsection{Permutation graphs as textile systems.}\label{sec_textile}
A \emph{textile system}, $T = (\Gamma, G, p,q)$, over a graph $G$ consists of a
graph $\Gamma = (\Gamma^0,\Gamma^1,\source_\Gamma,r_\Gamma)$ and a pair of graph homomorphisms $p,q \colon
\Gamma \to G$ such that for each edge $a \in \Gamma^1$, the quadruple
$(\source_\Gamma(a),\range_\Gamma(a),p(a),q(a))$ uniquely determines $a$
\cite[p.~14]{nasu_memoir}. Textile systems were introduced by Nasu to
investigate endomorphisms and automorphisms of shift spaces, and a
powerful set of tools has been developed for this purpose.
A textile system $T = (\Gamma, G, p,q)$ is said to be \emph{in the standard form} if $\Gamma$ and $G$ have no sinks or sources and the graph-homomorphisms $p,q$ are onto \cite[p.~18]{nasu_memoir}. From now on, all textile systems will be assumed to be in the standard form. 

For a textile system $T = (\Gamma, G, p, q)$, define maps $\phi_1, \phi_2 \colon \X_\Gamma \to \X_G$ by
\begin{align*}
\phi_1(\ldots a_{-1} a_0 a_1 \ldots ) &= \ldots p(a_{-1}) p(a_{0}) p(a_{1}) \ldots \\
\phi_2(\ldots a_{-1} a_0 a_1 \ldots ) &= \ldots q(a_{-1}) q(a_{0}) q(a_{1}) \ldots  
\end{align*}
Define $X_0 = \X_G$ and $Z_0 = \X_\Gamma$. For each $l \in \N$, define 
\begin{align*}
& X_l = \phi_1(Z_{l-1}) \cap \phi_2(Z_{l-1})  \\
& Z_l = \phi^{-1}_1(X_{l}) \cap \phi_2^{-1}(X_{l}),
\end{align*}
and let
\begin{displaymath}
\X_T = \bigcap_{l=0}^\infty X_k 
\quad  \textnormal{and}  \quad
\ZZ_T = \bigcap_{l=0}^\infty Z_k.
\end{displaymath}
The textile system $T$ is said to be \emph{non-degenerate} if $\X_T = \X_\Gamma$ (from which it follows that $\ZZ_T = \X_G$) \cite[p.~19]{nasu_memoir}. For a non-degenerate textile system, the maps $\phi_1,\phi_2 \colon \X_\Gamma \to \X_G$ are surjective. In all cases, the restrictions $\phi_1\vert_{\ZZ_T },\phi_2\vert_{\ZZ_T } \colon \ZZ_T \to \X_T$ have image $\X_T$.

A non-degenerate textile system, $T$, is said to be \emph{one-sided 1--1} if $\phi_1$ is injective and \emph{(two-sided) 1--1} if both $\phi_1$ and $\phi_2$ are injective \cite[p.~19]{nasu_memoir}.  If $T_\tau$ is one-sided 1--1, then the map $\phi_{T} = \phi_2 \circ \phi_1^{-1} \colon \X_G \to \X_G$ is an endomorphism of shift spaces, and this endomorphism is said to be \emph{coded by} $T$. If $T$ is two-sided 1--1, then this is an automorphism of shift spaces.


A graph homomorphism $h \colon \Gamma \to G$ is said to be \emph{right-resolving} if for each $\beta \in \Gamma^0$ and $e \in G^1$ with $\source_G(e) = h(\beta)$ there exists a unique $\mu \in \Gamma^1$ such that $\source_\Gamma(\mu) = \beta$ and $h(\mu) = e$ \cite[p.~40]{nasu_memoir}. Note that $(\Gamma, h)$ is a right-resolving labeled graph with alphabet $G^1$ when $h$ is a right-resolving graph homomorphism (but the converse is not always true). Left-resolving graph homomorphisms are defined analogously. A textile system is said to be \emph{LR} if $p$ is a left-resolving graph homomorphism and $q$ is a right-resolving graph homomorphism \cite[p.~40]{nasu_memoir}. It is straightforward to check that an LR textile systems is always non-degenerate \cite[Fact 3.3]{nasu_memoir}.


Given a permutation graph $(E_\tau,\LL_\tau)$ over a graph $E$, define maps $p_\tau,q_\tau \colon E_\tau \to E$ by 
\begin{align*}
p_\tau(\mu) &= \LL_1(\mu)  \qquad p_\tau(\beta) = \source_E(\beta) \\
q_\tau(\mu) &= \LL_2(\mu)  \qquad q_\tau(\beta) = \range_E(\beta)
\end{align*}
for all $\mu \in E_\tau^1$ and $\beta \in E_\tau^0 = E^{k-1}$, and let $T_\tau = (E_\tau, E, p_\tau,q_\tau)$. Lemma \ref{lem_Etau_properties} implies that $p_\tau, q_\tau$ are graph homomorphisms and that $(\source_\tau(\mu),\range_\tau(\mu),p_\tau(\mu),q_\tau(\mu))$ uniquely determines each $\mu \in E_\tau^1$, so $T_\tau$ is a textile system over $E$. Furthermore, Lemma \ref{lem_Etau_properties} implies that this textile system is LR, and hence, non-degenerate. The connection between the shift space endomorphism $\phi_{T_\tau}$ 
encoded by $T_\tau$ when $T_\tau$ is one-sided 1--1 and the endomorphism $\lambda_\tau$ of
$C^\ast(E)$ will be explored in the following sections.

\subsection{Automorphism criteria.}\label{sec_conditions}
Let $\lambda_\tau$ be a permutative endomorphism of $C^\ast(E)$ at
level $k$. Combinatorial conditions for the invertibility of
$\lambda_\tau$ have been developed in
\cite{chs_endomorphisms_of_graph_algebras}. These conditions can be
reformulated in terms of the permutation graph, but here an
independent formulation is given together with a new proof of the
most interesting implication.

\begin{remark}\label{rem_chs_graph}
To make the connection to \cite{chs_endomorphisms_of_graph_algebras} explicit, this remark gives a short discussion of the terminology used in that paper.
In the notation of \cite{chs_endomorphisms_of_graph_algebras}, define for
  each $e \in E^1$ the map $f_e \colon E^{k-1}_{\range(e) \to \ast} \to
  E^{k-1}_{\source(e) \to \ast}$ by $f_e(\alpha) = \source_\tau(e\alpha)$. The
  graph $(E_\tau,\LL_1)$ has an edge labeled $e$ from $\beta \in
  E^{k-1}$ to $\alpha \in E^{k-1}$ if and only if $f_e(\alpha) =
  \beta$. Hence, reversing the edges on $(E_\tau,\LL_1)$ will yield
  the graph considered in 
  \cite{chs_endomorphisms_of_graph_algebras}.
As an example of this, notice how the arrows of the permutation graph considered  in Example \ref{ex_golden_mean} have been reversed compared to the corresponding graph from \cite[Example
6.5]{chs_endomorphisms_of_graph_algebras}.
\end{remark}

The following lemma will help connect the theory of permutative
endomorphisms to the theory of textile systems. An analogous result
holds for right-resolving graphs. This is a standard result in
symbolic dynamics, so the proof is omitted. 

\begin{lem}\label{lem_synch}
Let $(G, \LL_1)$ be a finite left-resolving labeled graph. The following are equivalent:
\begin{enumerate}
\item The one-block code induced by the labeling of $(G, \LL_1)$ is invertible.
\item $(G, \LL_1)$ is left-synchronizing.
\item $(G, \LL_1)$ admits no two distinct cycles with the same label. 
\end{enumerate}
\end{lem}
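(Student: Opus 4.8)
The plan is to prove the cycle of implications $(1)\Rightarrow(3)\Rightarrow(2)\Rightarrow(1)$ (or equivalently $(2)\Leftrightarrow(3)$ together with $(2)\Leftrightarrow(1)$), exploiting throughout that $(G,\LL_1)$ is \emph{left-resolving}: a label together with the \emph{range} of an edge determines the edge. First I would record the basic consequence of left-resolvingness: if $\alpha,\beta\in G^m$ satisfy $\LL_1(\alpha)=\LL_1(\beta)$ and $\range(\alpha)=\range(\beta)$, then reading the edges from right to left and applying the left-resolving property repeatedly forces $\alpha=\beta$ edge by edge. Thus any ambiguity in decoding a bi-infinite label sequence is ``anchored on the right'': two presentations of the same label sequence that ever agree in range at some coordinate must agree everywhere to the left of it.

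For $(3)\Rightarrow(2)$, I would argue by contraposition in the form: if $(G,\LL_1)$ is not left-synchronizing, then it admits two distinct cycles with the same label. Suppose no delay $m$ works, so for every $m$ there are paths $\alpha^{(m)},\beta^{(m)}\in G^m$ with the same label but $\source(\alpha^{(m)})\neq\source(\beta^{(m)})$. Since $G$ is finite, there are only finitely many ordered pairs of vertices, so infinitely many of these pairs $(\source(\alpha^{(m)}),\source(\beta^{(m)}))$ coincide, say equal to $(u,u')$ with $u\neq u'$, and — passing to a further subsequence and using that the label alphabet and edge set are finite — we may extract two distinct biinfinite (or sufficiently long one-sided) label-equal paths starting at $u$ and $u'$ respectively that never synchronize. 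By a standard pumping/pigeonhole argument on the pair of current endpoints (there are finitely many pairs $(v,v')\in G^0\times G^0$), some pair of states repeats along these two parallel non-synchronizing paths, yielding two distinct cycles $c$ at $v$ and $c'$ at $v'$ with $\LL_1(c)=\LL_1(c')$; distinctness follows because throughout the paths the two tracks never agree in state, so in particular $c\neq c'$ as labeled paths or even as vertex sequences. This is essentially the classical construction and I would keep it brief.

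For $(2)\Rightarrow(1)$: if $(G,\LL_1)$ is left-synchronizing with delay $m$, I would define the inverse of the one-block code $\phi_{\LL_1}$ as a sliding block code of memory $m$. Given a bi-infinite label sequence $w=\phi_{\LL_1}(x)$ for some $x\in\X_G$, the block $w_{[i-m,i]}$ determines $\source(x_i)$ by left-synchronization (any path presenting that label block has source equal to $\source(x_i)$), and once the sources of all $x_i$ are known, left-resolvingness recovers each edge $x_i$ from $\LL_1(x_i)$ and $\range(x_i)=\source(x_{i+1})$. Hence $x$ is uniquely determined by $w$, the decoding is given by a fixed finite window, and so $\phi_{\LL_1}$ is a conjugacy; in particular it is invertible. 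For $(1)\Rightarrow(3)$: if $(G,\LL_1)$ had two distinct cycles $c\neq c'$ with $\LL_1(c)=\LL_1(c')$, then the two distinct periodic points $c^\infty$ and $(c')^\infty$ in $\X_G$ (distinct because $c\neq c'$, after matching up period lengths by replacing $c,c'$ with $c^{p'},(c')^{p}$ where $p,p'$ are their lengths) would have the same image under $\phi_{\LL_1}$, contradicting injectivity. I would close with the remark that by symmetry (reversing all edges) the analogous statement holds for right-resolving graphs and right-synchronization, which is the version actually needed for $\LL_2$.

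The main obstacle is the contrapositive direction $(3)\Rightarrow(2)$ — or rather its cleanest presentation: one must turn ``no finite delay works'' into an honest pair of distinct synchronized-never cycles, and the bookkeeping (which endpoints are repeating, why the extracted cycles are genuinely distinct rather than accidentally equal, handling the one-sided versus two-sided distinction) is where a careless argument goes wrong. I expect the paper sidesteps this entirely by citing it as ``a standard result in symbolic dynamics'' — indeed Lemma~\ref{lem_synch} is stated with ``the proof is omitted'' — so in the write-up I would either give the compact pigeonhole argument above or simply reference Lind--Marcus \cite{lind_marcus} for the equivalence of a left-resolving presentation being right-closing (= finite-to-one with retract) with the synchronization and invertibility conditions.
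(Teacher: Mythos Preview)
Your proposal is correct, and your expectation is exactly right: the paper omits the proof entirely, calling it ``a standard result in symbolic dynamics''. So there is no approach to compare against; your argument simply fills in what the paper leaves out.

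Two minor tightenings. First, in $(1)\Rightarrow(3)$ there is no need to match periods: since $\LL_1(c)=\LL_1(c')$ as words, the cycles $c$ and $c'$ already have the same length, and $c^\infty\neq(c')^\infty$ in $\X_G$ because they differ at the coordinate where $c$ and $c'$ first differ. Second, your $(3)\Rightarrow(2)$ is more elaborate than necessary. You do not need sequences of $m$'s or any subsequence extraction: pick a single $m>|G^0|^2$, take $\alpha,\beta\in G^m$ with $\LL_1(\alpha)=\LL_1(\beta)$ and $\source(\alpha)\neq\source(\beta)$, observe (exactly as you note) that left-resolvingness forces $\source(\alpha_i)\neq\source(\beta_i)$ for every $i$ (else backward induction would give $\source(\alpha)=\source(\beta)$), and then pigeonhole on the pairs $(\source(\alpha_i),\source(\beta_i))$ directly yields the two distinct same-label cycles. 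Your version reaches the same conclusion but via an unnecessarily scenic route.
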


%
%

A permutation graph $(E_\tau,\LL_\tau)$ is said to be \emph{synchronizing in the first
  label} if $(E_\tau,\LL_1)$ is left-synchronizing. Similarly, a
permutation graph is said to be \emph{synchronizing in the second
  label} if $(E_\tau,\LL_2)$ is right-synchronizing. In
\cite{chs_endomorphisms_of_graph_algebras}, equivalent conditions were
denoted property (b) and property (d), respectively.

\begin{lem}\label{lem_labeltosr}
If a permutation graph $(E_\tau,\LL_\tau)$ is synchronizing in the first or second label then there exists $n \in \N$ such that for each $A \in E_\tau^n$, $\LL_\tau(A)$ uniquely determines $\source_\tau(A)$ and $\range_\tau(A)$.
\end{lem}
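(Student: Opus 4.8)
The plan is to treat the two cases symmetrically and extract from the synchronization hypothesis a single window length $n$ that simultaneously pins down both the source and the range of any path of that length in $E_\tau$. First I would handle synchronization in the first label. By Corollary \ref{cor_Etau_resolving}, $(E_\tau,\LL_1)$ is left-resolving, so by Lemma \ref{lem_synch} the hypothesis that it is left-synchronizing means there is a delay $m_1$ such that any two paths in $E_\tau^{m_1}$ with the same $\LL_1$-label have the same $\source_\tau$. So $\LL_1(A)$ already determines $\source_\tau(A)$ for $A \in E_\tau^{m_1}$; what remains is to also control $\range_\tau(A)$. For this I would lengthen the window: if $A = A' A''$ with $A'' \in E_\tau^{m_1}$ at the tail, then $\source_\tau(A'')$ is determined by $\LL_1(A'')$, and since $\range_\tau(A') = \source_\tau(A'')$ by the path condition in $E_\tau$, and the prefix–suffix structure of paths in $E_\tau$ means $\range_\tau(A)$ is read off from $\range_\tau$ of the tail — more precisely, since $E_\tau^0 = E^{k-1}$ and consecutive edges of a path in $E_\tau$ overlap in the standard higher-block fashion, the label $\LL_1(A)$ (a path in $E^{|A|}$, by Proposition \ref{prop_same_shift}) together with the synchronizing delay determines the vertex $\source_\tau$ at every position along $A$, in particular at the terminal vertex, which is $\range_\tau(A)$. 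Taking $n = m_1 + k - 1$ (or any length at least $m_1$ past which the tail overlap resolves the terminal vertex) should suffice.

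Symmetrically, in the case of synchronization in the second label, $(E_\tau,\LL_2)$ is right-resolving by Corollary \ref{cor_Etau_resolving}, and the analogue of Lemma \ref{lem_synch} for right-resolving graphs (explicitly invoked in the statement of that lemma) gives a delay $m_2$ such that $\LL_2(A)$ determines $\range_\tau(A)$ for $A \in E_\tau^{m_2}$; reading $\LL_2$ along the path then determines $\range_\tau$ at every position, hence $\source_\tau(A)$ from the head overlap, and one takes $n = m_2 + k - 1$. Since the lemma only claims existence of \emph{some} $n$ that works when \emph{at least one} of the two synchronization properties holds, it is enough to produce the appropriate $n$ in whichever case is assumed; no single $n$ needs to work for both simultaneously.

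The step I expect to require the most care is the bookkeeping that converts "the synchronizing delay determines $\source_\tau$ of a length-$m$ path" into "the label of a long path determines $\source_\tau$ \emph{and} $\range_\tau$." The subtlety is that $\source_\tau(\mu) = \tau(\mu)_1\cdots\tau(\mu)_{k-1}$ is \emph{not} simply a prefix of $\mu$ — it is the image under $\tau$ — so one cannot just say "the source is the first $k-1$ coordinates of the label." What does work is that $\LL_1$ of a path in $E_\tau$ is an honest path over $E$ (Proposition \ref{prop_same_shift}), and a path over $E$ of length $\ell$ visits $\ell+1$ vertices of $E$, each being the $\range_\tau/\source_\tau$ of the corresponding segment; the synchronization hypothesis then fixes these $E^{k-1}$-vertices once the path is long enough on either side of the position in question. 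I would write this out using the overlap identity $\range_\tau(\mu) = \mu_2\cdots\mu_k$ together with $\source_\tau$ of the next edge, so that consecutive vertices along a path in $E_\tau$ are consistent, and then note that a window of length $n = m + k - 1$ places every vertex of $A$ — including the last — within a synchronizing sub-window. The remaining verifications are routine combinatorics on finite paths.
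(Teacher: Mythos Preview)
Your plan has a genuine gap in the step you yourself flag as delicate. In the first-label case you try to pin down $\range_\tau(A)$ using $\LL_1(A)$ alone, by lengthening the window and appealing to ``overlap'' structure. But left-synchronization of $(E_\tau,\LL_1)$ with delay $m_1$ only tells you that $\LL_1$ of a length-$m_1$ block determines the \emph{source} of that block. Sliding this along a path $A=\mu^{(1)}\cdots\mu^{(n)}$ you recover the internal vertices $v_0,v_1,\ldots,v_{n-m_1}$, since each is $\source_\tau$ of some length-$m_1$ suffix; but the terminal vertex $v_n=\range_\tau(A)$ is the source of \emph{no} subpath of $A$, so left-synchronization says nothing about it. Your sentence ``in particular at the terminal vertex, which is $\range_\tau(A)$'' is precisely where the argument breaks. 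The $k-1$ extension does not help: the identity $\range_\tau(\mu)=\mu_2\cdots\mu_k$ relates to $\source_\tau$ of the \emph{next} edge only through $\tau$, and there is no next edge at the end of $A$. Concretely, already for $\tau=\id$ on the full $2$-shift at level $k=2$, the labeled graph $(E_\tau,\LL_1)$ is left-resolving and left-synchronizing with delay $1$, yet the single-letter $\LL_1$-label $0$ is carried by both $00\colon 0\to 0$ and $01\colon 0\to 1$; no amount of lengthening resolves the range from $\LL_1$ alone.

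The fix is to remember that the hypothesis gives you the full label $\LL_\tau(A)=(\LL_1(A),\LL_2(A))$, and to use the second component. By Corollary~\ref{cor_Etau_resolving}, $(E_\tau,\LL_2)$ is right-resolving, so knowing $\source_\tau(A)$ together with $\LL_2(A)$ determines the first edge of $A$, hence the next vertex, and inductively all of $A$ and in particular $\range_\tau(A)$. Thus $n=m_1$ already suffices, with no window-lengthening. This is exactly the paper's argument: $\LL_1$ fixes the source by synchronization, then $\LL_2$ (right-resolving) walks forward to fix the range. The second-label case is symmetric, interchanging the roles of $\LL_1$ and $\LL_2$.
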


\begin{proof}
Assume that $(E_\tau,\LL_\tau)$ is synchronizing in the first label and choose $n \in  \N$ such that for each $A \in E_\tau^n$, $\LL_1(A)$ uniquely determines $\source_\tau(A)$. Since $(E_\tau,\LL_2)$ is right-resolving, $\range_\tau(A)$ is uniquely determined by $\source_\tau(A)$ and $\LL_2(A)$. The other part of the statement is shown analogously.
\end{proof}

\begin{lem}\label{lem_Pmu}
Let $(E_\tau,\LL_\tau)$ be a permutation graph. The following are equivalent:
\begin{enumerate}
\item The labeled graph $(E_\tau,\LL_\tau)$ is synchronizing in the first label.\label{lem_Pmu_synch}
\item $T_\tau$ is one-sided 1--1.\label{lem_Pmu_1-1}
\item The endomorphism $\lambda_\tau \colon C^\ast(E) \to C^\ast(E)$ restricts to an automorphism of $\DD_E$.\label{lem_Pmu_auto}
\end{enumerate}
\end{lem}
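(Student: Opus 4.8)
The plan is to prove the equivalence by establishing a cycle of implications, say \enumref{lem_Pmu_synch} $\Rightarrow$ \enumref{lem_Pmu_1-1} $\Rightarrow$ \enumref{lem_Pmu_auto} $\Rightarrow$ \enumref{lem_Pmu_synch}, using the identifications already set up in the excerpt: the spectrum of $\DD_E$ is $\X^+_E$, the permutation graph presents $\X_E$ in two ways via $\LL_1$ and $\LL_2$ (Proposition \ref{prop_same_shift}), and $T_\tau$ is the LR textile system with $p_\tau = \LL_1$, $q_\tau = \LL_2$ on edges.

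First I would handle \enumref{lem_Pmu_synch} $\Rightarrow$ \enumref{lem_Pmu_1-1}: by Lemma \ref{lem_synch}, synchronizing in the first label means the one-block code induced by $\LL_1$ on $\X_{E_\tau}$ is invertible, i.e.\ $\phi_1 = \phi_{p_\tau}$ is injective on $\X_{E_\tau}$. Since $T_\tau$ is non-degenerate (noted in Section \ref{sec_textile}), $\ZZ_{T_\tau} = \X_E$ and $\X_{T_\tau} = \X_{E_\tau}$, so injectivity of $\phi_1$ is exactly the definition of $T_\tau$ being one-sided 1--1. For \enumref{lem_Pmu_1-1} $\Rightarrow$ \enumref{lem_Pmu_auto}: when $T_\tau$ is one-sided 1--1, $\phi_{T_\tau} = q_\tau \circ p_\tau^{-1} = \phi_{\LL_2}\circ\phi_{\LL_1}^{-1}$ is an endomorphism of $\X_E$; I would check it is the restriction of $\lambda_\tau$ to the spectrum of $\DD_E$. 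Here I would use Proposition \ref{prop_action_on_path}: $\lambda_\tau(S_\gamma) = \sum_i S_{\beta_i}S_{\delta_i}S_{\alpha_i}^\ast$ where the $A_i$ range over $\LL_1^{-1}(\gamma)$, so $\lambda_\tau(P_\gamma) = \lambda_\tau(S_\gamma S_\gamma^\ast) = \sum_i S_{\beta_i}S_{\delta_i}(S_{\beta_i}S_{\delta_i})^\ast = \sum_i P_{\beta_i\delta_i}$ (cross terms vanish because distinct $A_i$ with the same $\LL_1$-label have distinct $\source_\tau$ by Corollary \ref{cor_Etau_resolving}, hence incomparable paths $\beta_i\delta_i$ — I should double check this gives orthogonality). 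Thus $\lambda_\tau(\DD_E)\subseteq\DD_E$ and the induced map on $\X^+_E$ sends a path $z^+$ with prefix $\gamma$ to the concatenation dictated by reading $z^+$ through the $\LL_1$-presentation and re-emitting through $\LL_2$ — which is precisely the (one-sided version of the) sliding block code $\phi_{T_\tau}$. Being invertible as a sliding block code, it induces an automorphism of $\X^+_E$, hence an automorphism of $\DD_E$ by Gelfand duality (using that $\lambda_\tau$ is injective, \cite[Prop.~2.1]{chs_endomorphisms_of_graph_algebras}).

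For the return implication \enumref{lem_Pmu_auto} $\Rightarrow$ \enumref{lem_Pmu_synch}: if $\lambda_\tau|_{\DD_E}$ is an automorphism, then the induced endomorphism of $\X^+_E$ is a homeomorphism, so the map $z^+ \mapsto$ (image prefix sequence) is injective; unwinding this injectivity and passing to the two-sided shift, the labeling $\LL_1$ must separate bi-infinite paths, i.e.\ $\phi_{\LL_1}$ is injective on $\X_E = \X_{E_\tau}$, which by Lemma \ref{lem_synch} is equivalent to $(E_\tau,\LL_1)$ being left-synchronizing, i.e.\ synchronizing in the first label. The one subtlety is moving between one-sided and two-sided pictures: I would either argue directly that a left-resolving labeled graph whose one-block code is non-invertible (admits two cycles with the same label, by Lemma \ref{lem_synch}) produces two distinct points of $\X^+_E$ with the same $\lambda_\tau$-image, contradicting injectivity on the spectrum, or invoke the standard fact that for left-resolving presentations one-sided and two-sided injectivity of the labeling coincide.

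The main obstacle I expect is the bookkeeping in \enumref{lem_Pmu_1-1} $\Rightarrow$ \enumref{lem_Pmu_auto}: precisely matching the abstractly-defined textile code $\phi_{T_\tau} = \phi_2\circ\phi_1^{-1}$ with the concrete action of $\lambda_\tau$ on diagonal projections obtained from Proposition \ref{prop_action_on_path}, and making sure the orthogonality of the summands $P_{\beta_i\delta_i}$ (so that $\lambda_\tau$ really lands in $\DD_E$ and the induced map on the spectrum is well-defined and equals the sliding block code) is fully justified from the resolving properties in Corollary \ref{cor_Etau_resolving}. Everything else is a fairly mechanical translation through the identifications already assembled in Sections \ref{sec_background}--\ref{sec_textile}.
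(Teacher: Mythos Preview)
There is a genuine gap in your implication \enumref{lem_Pmu_1-1} $\Rightarrow$ \enumref{lem_Pmu_auto}. You write that $\phi_{T_\tau}$ is ``invertible as a sliding block code'' and therefore yields an automorphism of $\X^+_E$, but one-sided 1--1 only says $\phi_1$ is injective; it does \emph{not} make $\phi_{T_\tau}=\phi_2\circ\phi_1^{-1}$ invertible. That would require $\phi_2$ to be injective too, i.e.\ two-sided 1--1, which is exactly the extra hypothesis of Theorem~\ref{thm_automorphism}. Example~\ref{ex_cycles} is an instance where \enumref{lem_Pmu_synch}--\enumref{lem_Pmu_auto} hold while $\phi_{T_\tau}$ is a non-invertible endomorphism of $\X_E$. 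So the surjectivity of $\lambda_\tau|_{\DD_E}$ cannot come from invertibility of $\phi_{T_\tau}$.

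What actually makes the spectral map $\phi_\tau^+$ on $\X^+_E$ a bijection is the \emph{right-resolving} property of $(E_\tau,\LL_2)$ from Corollary~\ref{cor_Etau_resolving}, which you never invoke in this step: by Proposition~\ref{prop_one-sided}, $\phi_\tau^+(x^+)=\beta y^+$ with $\beta=\source_\tau(A)$ and $y^+=\LL_2(A)$ for the unique $\LL_1$-lift $A$ of $x^+$; if $\phi_\tau^+(x^+)=\phi_\tau^+(x'^+)$ then the two lifts share source and $\LL_2$-label, hence coincide by right-resolvancy, giving $x^+=x'^+$. With this added, your spectral argument can be made to work, but as written it does not. (A small related slip: in your orthogonality check you say ``distinct $\source_\tau$'', but left-resolvancy of $\LL_1$ forces distinct \emph{ranges} $\alpha_i$, which is what makes $S_{\alpha_i}^\ast S_{\alpha_j}=0$.)

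For comparison, the paper does not argue via the spectrum at all for \enumref{lem_Pmu_synch} $\Rightarrow$ \enumref{lem_Pmu_auto}: it proves surjectivity of $\lambda_\tau|_{\DD_E}$ constructively, using the synchronization delay $l$ to manufacture, for each $P_\mu$, an explicit element $\sum_i P_{\nu\gamma_i}\in\DD_E$ with $\lambda_\tau\big(\sum_i P_{\nu\gamma_i}\big)=P_\mu$ via Proposition~\ref{prop_action_on_path}. The paper also does not run a cycle: it shows \enumref{lem_Pmu_synch} $\Leftrightarrow$ \enumref{lem_Pmu_1-1} from Lemma~\ref{lem_synch}, proves \enumref{lem_Pmu_synch} $\Rightarrow$ \enumref{lem_Pmu_auto} directly, and cites \cite[Lem.~6.1]{chs_endomorphisms_of_graph_algebras} for \enumref{lem_Pmu_auto} $\Rightarrow$ \enumref{lem_Pmu_synch}. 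Your sketch for \enumref{lem_Pmu_auto} $\Rightarrow$ \enumref{lem_Pmu_synch} via two $\LL_1$-identically labeled cycles is plausible but would still need to be turned into an honest argument that the Gelfand-dual map fails to be injective.
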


\begin{figure}
\begin{center}
\begin{tikzpicture}
  [bend angle=10,
   clearRound/.style = {circle, inner sep = 0pt, minimum size = 17mm},
   clear/.style = {rectangle, minimum width = 5 mm, minimum height = 5 mm, inner sep = 0pt},  
   greyRound/.style = {circle, draw, minimum size = 1 mm, inner sep =
      0pt, fill=black!10},
   grey/.style = {rectangle, draw, minimum size = 6 mm, inner sep =
      1pt, fill=black!10},
    white/.style = {rectangle, draw, minimum size = 6 mm, inner sep =
      1pt},
   to/.style = {->, shorten <= 1 pt, >=stealth', semithick}]
  
  \node[grey] (start) at (0,0) {$\mu_{1\ldots k-1}$};
  \node[grey] (alpha) at (3,0) {$\alpha$};

  \node[grey] (betaij) at (6,1) {$\beta_{ij}$};
  \node[grey] (betain) at (6,-1) {$\beta_{ij'}$};

  \draw[to] (start) .. controls (1,-1) and (2, 1) .. node[near end, above] {$[\nu, \mu_{k\ldots\vert \mu \vert}]$} (alpha); 
    \draw[to] (alpha) .. controls (4,2) and (5, 0) .. node[near start, above] {$[\gamma_i, \eta_{ij}]$} (betaij); 
    \draw[to] (alpha) .. controls (4,-2) and (5, 0) .. node[near start, below] {$[\gamma_i, \eta_{ij'}]$} (betain); 
\end{tikzpicture}
\end{center}
\caption{Construction of the paths considered in the proof of Lemma \ref{lem_Pmu}.}
\label{fig_Pmu}
\end{figure}
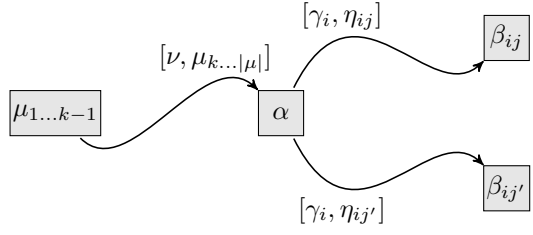

\begin{proof}
  $\enumref{lem_Pmu_synch} \Longleftrightarrow \enumref{lem_Pmu_1-1}$: By Lemma \ref{lem_synch}, $(E_\tau,
  \LL_1)$ is left-synchronizing if and only if the sliding block code
  induced by the labeling is invertible.

  $\enumref{lem_Pmu_synch} \implies \enumref{lem_Pmu_auto}$: Let $\tau$ be an endpoint-fixing permutation
  of $E^k$, and assume that $(E_\tau,\LL_1)$ is left-synchronizing 
  with delay $l$. 
 The permutative endomorphism $\lambda_\tau$ is automatically injective \cite[Prop. 2.1]{chs_endomorphisms_of_graph_algebras}, so it is sufficient to prove surjectivity.
Let $\mu \in E^\ast$ with $\vert \mu \vert \geq
  k$ be given. The aim of the following is to use Proposition \ref{prop_action_on_path} to construct an element $x \in \DD_E$ for which $\lambda_\tau(x) = P_\mu$. This process is illustrated in Figure
  \ref{fig_Pmu}.
By Lemma \ref{lem_Etau_properties}, the vertex $\mu_{1 \ldots k-1} \in E_\tau^0 =
  E^{k-1}$ emits a unique path with second label 
  $\mu_{k \ldots \vert \mu \vert}$. Let $\nu$ be the first label of this path (i.e.\ its image under $\LL_1$)
  and let $\alpha \in E_\tau^0 = E^{k-1}$ be the range.

In order to use Proposition \ref{prop_action_on_path}, it is necessary to find paths in the permutation graph with the required first and second labels.
  By Lemma \ref{lem_Etau_properties}, there is a unique path in
  $E^\ast_\tau$ with source $\alpha$ and second label $\eta \in E^\ast$ precisely when $\source_E(\eta) = \range_E(\nu)$.
   Consider
  the paths in $E^*_\tau$ with source $\alpha$ and length $l$. Let $\{
  \gamma_i \}$ be the set of first labels for all such paths. 
  Left-synchronization implies that any path in $E^*_\tau$ with first label $\gamma_i$ must start at $\alpha$. This is illustrated in Figure \ref{fig_Pmu} for a single $\gamma_i$.
  For each $i$, there may be more than one
  path with first label $\gamma_i$, but the second label uniquely
  identifies the path by Lemma \ref{lem_Etau_properties}. 
   Let $\{
  \eta_{ij} \}$ be the set of second labels of paths with first label
  $\gamma_i$, and let $\beta_{ij}$ be the range of the unique path
  with second label $\eta_{ij}$ in $E_\tau$. Because
  $(E_\tau,\LL_1)$ is left-resolving, $\beta_{ij} \neq \beta_{ij'}$
  whenever $j \neq j'$. The left-synchronization and left-resolvancy
  of $(E_\tau,\LL_1)$ implies that any path with first label $\nu
  \gamma_i$ must have $\mu_{k\ldots \vert \mu \vert}$ as a prefix of
  the second label. By Proposition \ref{prop_action_on_path}, this
  means that
\begin{multline*}
\lambda_\tau \left(\sum_i S_\nu S_{\gamma_i} S_{\gamma_i}^\ast S_\nu^\ast \right) \\
= S_{\mu_{1\ldots k-1}} S_{\mu_{k \ldots \vert \mu \vert}} \sum_{i,j,j'} \left( S_{\eta_{ij}} S_{\beta_{ij}}^\ast S_{\beta_{ij'}} S_{\eta_{ij'}}^\ast \right)   
S_{\mu_{k \ldots \vert \mu \vert}}^\ast S_{\mu_{1\ldots k-1}}^\ast \\
= S_\mu \sum_{ij} \left( S_{\eta_{ij}} S_{\eta_{ij}}^\ast \right) S_\mu^\ast
= S_\mu S_\mu^\ast.  
\end{multline*}
As $\mu$ was arbitrary, this shows that $\lambda_\tau$ restricts to a surjection of $\DD_E$.



$\enumref{lem_Pmu_auto} \implies \enumref{lem_Pmu_synch}$: This is a restatement of a result proved in
\cite[Lem.~6.1]{chs_endomorphisms_of_graph_algebras} under the
identification given in Remark \ref{rem_chs_graph}.
\end{proof}

\begin{thm}\label{thm_automorphism}
The following are equivalent:
\begin{enumerate}
\item $(E_\tau,\LL_\tau)$ is synchronizing in both the first and the second label.\label{thm_auto_synch}
\item $T_\tau$ is two-sided 1--1.\label{thm_auto_1-1}
\item $\lambda_\tau$ is an automorphism of $C^\ast(E)$.\label{thm_auto_auto}
\end{enumerate}
Furthermore, such an automorphism has a permutative inverse.
\end{thm}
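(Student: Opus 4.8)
The plan is to prove a cycle of implications $\enumref{thm_auto_synch}\implies\enumref{thm_auto_1-1}\implies\enumref{thm_auto_auto}\implies\enumref{thm_auto_synch}$, leaning heavily on Lemma \ref{lem_Pmu}, which already handles the "first label" half of the story. Since Lemma \ref{lem_Pmu} gives $\enumref{thm_auto_synch}$-first-label $\Leftrightarrow$ $T_\tau$ one-sided 1--1 $\Leftrightarrow$ $\lambda_\tau|_{\DD_E}$ an automorphism, the new content here is genuinely about the second label and about upgrading an automorphism of $\DD_E$ to an automorphism of all of $C^\ast(E)$.

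For $\enumref{thm_auto_synch}\implies\enumref{thm_auto_1-1}$: synchronizing in the first label makes $\phi_1$ injective (via Lemma \ref{lem_Pmu}, $\enumref{lem_Pmu_synch}\Leftrightarrow\enumref{lem_Pmu_1-1}$), and synchronizing in the second label makes $\phi_2$ injective by the "analogous result for right-resolving graphs" mentioned after Lemma \ref{lem_synch}; since $T_\tau$ is already LR hence non-degenerate, both conditions together give two-sided 1--1. For $\enumref{thm_auto_1-1}\implies\enumref{thm_auto_auto}$: being one-sided 1--1 already gives, via Lemma \ref{lem_Pmu}, that $\lambda_\tau$ restricts to an automorphism of $\DD_E$; I would then show surjectivity of $\lambda_\tau$ on all of $C^\ast(E)$. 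The key point is that it suffices to hit each generator $S_e$: given the automorphism $\psi$ of $\DD_E$, I can write $S_e = (S_e S_e^\ast)\,S_e\,(S_e^\ast S_e) = P_e\,S_e\,P_{\range(e)}$, and using Lemma \ref{lem_image_Se} together with the second-label synchronization (Lemma \ref{lem_labeltosr}, applied to long enough paths so that labels determine sources and ranges in $E_\tau$) one can invert the formula $\lambda_\tau(S_e)=\sum_i S_{\beta_i}S_{f_i}S_{\alpha_i}^\ast$. Concretely, I expect to construct an explicit preimage as a finite sum $\sum S_{\gamma}S_{e'}S_{\gamma'}^\ast$ over appropriate paths, where the second-label synchronization guarantees the cross terms cancel just as they do in the proof of Lemma \ref{lem_Pmu}. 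For $\enumref{thm_auto_auto}\implies\enumref{thm_auto_synch}$: if $\lambda_\tau$ is an automorphism of $C^\ast(E)$ then in particular it restricts to an automorphism of $\DD_E$ (as $\lambda_\tau$ preserves $\DD_E$ and is injective, surjectivity onto $\DD_E$ follows), so Lemma \ref{lem_Pmu} gives synchronization in the first label; synchronization in the second label then follows by applying the same argument to the inverse, or directly by the symmetric version of \cite[Lem.~6.1]{chs_endomorphisms_of_graph_algebras} (property (d) in that paper).

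For the final sentence — that the inverse is permutative — the plan is to use the symmetry of the permutation graph under swapping the two labels. Interchanging $\LL_1$ and $\LL_2$ and reversing edges sends the permutation graph of $\tau$ to the permutation graph of a permutation $\tau^{-1}$-type object at the same level $k$; concretely, if $\tau(e\alpha)=\beta f$ then the "dual" permutation sends $\beta f \mapsto e\alpha$, and Proposition \ref{prop_sufficient} confirms the swapped-and-reversed labeled graph again satisfies the Lemma \ref{lem_Etau_properties} conditions, hence is a genuine permutation graph of some endpoint-fixing $\tau'\in\Perm(E^k)$. Writing $U_\tau = \sum_\alpha S_{\tau(\alpha)}S_\alpha^\ast$, one checks $\lambda_\tau\circ\lambda_{\tau'} = \lambda_{\tau'}\circ\lambda_\tau = \id$ by a direct computation on generators using Lemma \ref{lem_image_Se}, so $\lambda_\tau^{-1}=\lambda_{\tau'}$ is permutative. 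The main obstacle I anticipate is the surjectivity step $\enumref{thm_auto_1-1}\implies\enumref{thm_auto_auto}$: bootstrapping from "$\lambda_\tau$ surjects onto $\DD_E$" to "$\lambda_\tau$ surjects onto $C^\ast(E)$" requires producing preimages of the non-diagonal generators $S_e$, and getting the cross-terms to telescope cleanly will require choosing the synchronization delay large enough and bookkeeping the paths in $E_\tau$ carefully, exactly as in the computation displayed in the proof of Lemma \ref{lem_Pmu}.
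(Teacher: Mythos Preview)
Your plan for the equivalences is essentially what the paper does: $(1)\Leftrightarrow(2)$ via Lemma~\ref{lem_synch} and its right-resolving analogue, $(1)\Rightarrow(3)$ by extending the path-chasing surjectivity argument of Lemma~\ref{lem_Pmu} from projections $P_\mu$ to general elements $S_\mu S_\nu^\ast$, and $(3)\Rightarrow(1)$ by citation to \cite{chs_endomorphisms_of_graph_algebras}. One small caveat on your $(3)\Rightarrow(1)$: the MASA argument does give $\lambda_\tau(\DD_E)=\DD_E$ and hence first-label synchronization via Lemma~\ref{lem_Pmu}, but ``apply the same argument to the inverse'' for the second label is circular---you do not yet know the inverse is permutative---so you really do need the citation there, just as the paper does.

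The genuine gap is your argument for the permutative inverse. The operation ``interchange $\LL_1,\LL_2$ and reverse edges'' does \emph{not} produce a permutation graph for general $E$: if $\beta\llarc{e}{f}\alpha$ is an edge of $E_\tau$ with $e\colon s\to r$, $\alpha\in E^{k-1}_{r\to t}$, $\beta\in E^{k-1}_{s\to q}$, $f\colon q\to t$, then your swapped-reversed edge $\alpha\llarc{f}{e}\beta$ would require $f\beta\in E^k$, i.e.\ $\range_E(f)=\source_E(\beta)$, forcing $t=s$; so Proposition~\ref{prop_sufficient} does not apply. Even in the $\OO_n$ case where this obstruction vanishes, the ``dual'' permutation you write down, $\beta f\mapsto e\alpha$, is simply $\tau^{-1}$, and $\lambda_{\tau^{-1}}$ is \emph{not} the inverse of $\lambda_\tau$: from the composition law $\lambda_\tau\circ\lambda_\pi=\lambda_\sigma$ with $U_\sigma=\lambda_\tau(U_\pi)\,U_\tau$, one sees that $\lambda_\pi=\lambda_\tau^{-1}$ forces $\lambda_\tau(U_\pi)=U_\tau^\ast$; taking $U_\pi=U_{\tau^{-1}}=U_\tau^\ast$ would then require $\lambda_\tau(U_\tau)=U_\tau$, a condition almost never satisfied. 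In fact the inverse is generally not at level $k$ at all---its level is governed by the synchronization delays. The paper obtains the permutative inverse not via any duality on $E_\tau$ but as a byproduct of the surjectivity construction: the explicit preimages of the elements $S_\mu S_\nu^\ast$ built there (for $\lvert\mu\rvert,\lvert\nu\rvert\ge k$, using paths of length $l$ equal to the synchronization delay) are themselves permutative, and assembling the right ones yields a permutative unitary $U_\pi$ at this higher level with $\lambda_\tau(U_\pi)=U_\tau^\ast$, whence $\lambda_\tau^{-1}=\lambda_\pi$.
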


\begin{proof}
  $\enumref{thm_auto_synch} \Longleftrightarrow \enumref{thm_auto_1-1}$: As in the proof of Lemma
  \ref{lem_Pmu}, this follows from Lemma \ref{lem_synch}.

  $\enumref{thm_auto_synch} \implies \enumref{thm_auto_auto}$: By \cite[Prop. 2.1]{chs_endomorphisms_of_graph_algebras}, 
  $\lambda_\tau$ is injective. To prove surjectivity, choose $l$ such that $(E_\tau,\LL_1)$ is
  left-synchronizing with delay $l$ and $(E_\tau,\LL_2)$ is
  right-synchronizing with delay $l$. Let $\mu, \nu \in E^\ast$ have the same endpoint
  $\range(\mu)=\range(\nu)$, and $\vert \mu \vert, \vert \nu \vert \geq k$. 
  As in the proof of Lemma \ref{lem_Pmu}, the aim is to use Proposition \ref{prop_action_on_path} to choose an element of $C^\ast(E)$ defined by a suitable collection of paths in the permutation graph. More specifically, the aim is to construct an element $x$, such that $\lambda_\tau(x) = S_\mu S_\nu^\ast$. This process is illustrated in Figure  \ref{fig_SmuSnu}. Such elements will then be used to construct a permutative unitary $U_\pi$ for which $\lambda_\pi$ is the inverse of $\lambda_\tau$. 

  Consider the unique path in $E^*_\tau$ with source node $\mu_{1\ldots
    k-1}$ and second label $\mu_{k\ldots\vert \mu \vert}$. Let $\bar
  \mu$ be the first label of this path, and let $\alpha \in E_\tau^0
  = E^{k-1}$
  be the range. Similarly, consider the unique path in
  $E^\ast_\tau$ with source $\nu_{1\ldots k-1}$ and second label
  $\nu_{k\ldots\vert \nu \vert}$. Let $\bar \nu$ be the first label of
  this path, and let $\beta \in E_\tau^0 = E^{k-1}$ be the range.

\begin{figure}
\begin{center}
\begin{tikzpicture}
  [bend angle=10,
   clearRound/.style = {circle, inner sep = 0pt, minimum size = 17mm},
   clear/.style = {rectangle, minimum width = 5 mm, minimum height = 5 mm, inner sep = 0pt},  
   greyRound/.style = {circle, draw, minimum size = 1 mm, inner sep =
      0pt, fill=black!10},
   grey/.style = {rectangle, draw, minimum size = 6 mm, inner sep =
      1pt, fill=black!10},
    white/.style = {rectangle, draw, minimum size = 6 mm, inner sep =
      1pt},
   to/.style = {->, shorten <= 1 pt, >=stealth', semithick}]
  
  \node[grey] (mu) at (0,1) {$\mu_{1\ldots k-1}$};
  \node[grey] (nu) at (0,-1) {$\nu_{1\ldots k-1}$};
  \node[grey] (alpha) at (3,1) {$\alpha$};
  \node[grey] (beta) at (3,-1) {$\beta$};
  \node[grey] (zeta) at (6,0) {$\zeta_i$};
  \node[grey] (delta) at (6,2) {$\delta_{ij}$};
  \node[grey] (epsilon) at (6,-2) {$\epsilon_{im}$};

  \draw[to] (mu) .. controls (1,0) and (2, 2) .. node[near end, above] {$[\bar \mu, \mu_{k\ldots\vert \mu \vert}]$} (alpha); 
  \draw[to] (nu) .. controls (1,0) and (2, -2) .. node[near end, below] {$[\bar \nu, \nu_{k\ldots\vert \nu \vert}]$} (beta); 

  \draw[to] (alpha) .. controls (4,0) and (5, 1) .. node[near end, above] {$[\bar  \gamma_i^1, \gamma_i]$} (zeta); 
  \draw[to] (beta) .. controls (4,0) and (5, -1) .. node[near end, below] {$[\bar  \gamma_i^2, \gamma_i]$} (zeta); 

  \draw[to,dashed] (alpha) .. controls (4,1) and (5, 3) .. node[near end, above] {$[\bar  \gamma_i^1, \eta_{ij}]$} (delta); 
  \draw[to,dashed] (beta) .. controls (4,-1) and (5, -3) .. node[near end, below] {$[\bar  \gamma_i^2, \xi_{im}]$} (epsilon); 
  
\end{tikzpicture}
\end{center}
\caption{Construction of the paths considered in the proof of Theorem \ref{thm_automorphism} associated to a given $\gamma_i$.}
\label{fig_SmuSnu}
\end{figure}
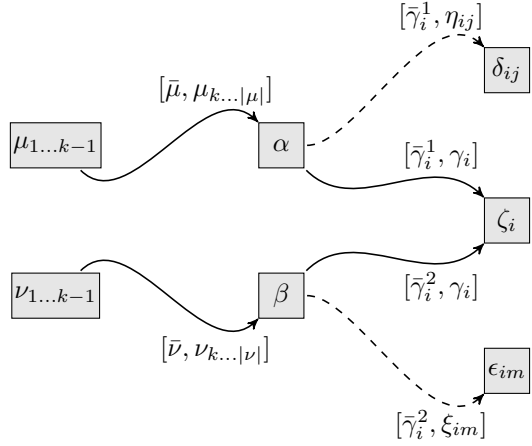

By Lemma \ref{lem_Etau_properties}, for each $\gamma \in E^\ast$ with
$\source_E(\gamma) = \range_E(\mu)$, there is precisely one path in $E^*_\tau$ with
second label $\gamma$ and source $\alpha$. Similarly, there is
precisely one path in $E^*_\tau$ with second label $\gamma$ and source
$\beta$.
Let $\{ \gamma_i \}$ be the paths in $E^l$ with $\source_E(\gamma) = \range_E(\mu) =
\range_E(\nu)$. Given such a $\gamma_i$, let $\bar \gamma_i^1$ be the first
label of the unique path in $E^*_\tau$ with source $\alpha$ and second
label $\gamma_i$. Similarly, let $\bar \gamma_i^2$ be the first label
of the unique path in $E^*_\tau$ with source $\beta$ and second label
$\gamma_i$. Right-synchronization of $(E_\tau,\LL_2)$ implies that
these two paths have the same range $\zeta_i$. This is shown in Figure \ref{fig_SmuSnu} for a single path $\gamma_i$.

Left-synchronization of $(E_\tau,\LL_1)$ implies that any path in
$E^\ast_\tau$ with first label $\bar \gamma_i^1$ must start at $\alpha$
and right-resolvancy of $(E_\tau,\LL_1)$ means that all these
paths have different second labels. Let $\{ \gamma_i \} \cup \{
\eta_{ij} \}$ be the set of second labels of these paths, and let
$\delta_{ij}$ be the range of the path with second label $\eta_{ij}$. This is illustrated by a dashed path in Figure \ref{fig_SmuSnu} for a single value of $j$.
Similarly, any path in $E^*_\tau$ with first label $\bar \gamma_i^2$
must start at $\beta$ and all these paths have unique second
labels. Let $\{ \gamma_i \} \cup \{\xi_{im}\}$ be the the set of
second labels of these paths, and let $\epsilon_{im}$ be the range of
the path with second label $\xi_{im}$. As above, this is illustrated by a dashed path in Figure \ref{fig_SmuSnu} for a single value of $m$.

For each $\gamma_i$ with $\source_E(\gamma_i) = \range_E(\mu) = \range_E(\nu)$ and $\vert
\gamma_i \vert = l$, use Lemma \ref{lem_Pmu} to choose $q_{\mu 
  \gamma_i}$ and $q_{\nu \gamma_i}$ such that $\lambda_\tau(q_{\mu
  \gamma_i}) = P_{\mu \gamma_i}$ and $\lambda_\tau(q_{\nu \gamma_i}) =
P_{\nu \gamma_i}$. By Proposition \ref{prop_action_on_path},
\begin{multline*}
\lambda_\tau \left( \sum_i q_{\mu \gamma_i} 
S_{\bar \mu} S_{\bar \gamma_i^1} S_{\bar \gamma_i^2}^\ast  S_{\bar \nu}^\ast 
q_{\nu \gamma_i} \right) \\
= \sum_i P_{\mu \gamma_i}
S_\mu
\left( S_{\gamma_i} S_{\zeta_i}^\ast + \sum_j S_{\eta_{ij}} S_{\delta_{ij}}^\ast   \right)
\left( S_{\zeta_i} S_{\gamma_i}^\ast + \sum_m S_{\epsilon_{im}} S_{\xi_{im}}^\ast   \right)
S_\nu^\ast
P_{\nu \gamma_i} \\
= \sum_i S_{\mu \gamma_i} S_{\nu \gamma_i}^\ast = S_\mu S_\nu^\ast.
\end{multline*}
Considering a sum of such elements allows the construction of a permutative unitary $U_\pi$ such that $\lambda_\tau(U_\pi) = U_\tau^\ast$.  Hence, $\lambda_\tau$ is invertible with inverse $\lambda_\pi$.

$\enumref{thm_auto_auto} \implies \enumref{thm_auto_synch}$: This was proved in
\cite[Thm.~6.4]{chs_endomorphisms_of_graph_algebras} up to a slight
reformulation similar to the one given in Remark \ref{rem_chs_graph}
which is needed to translate the result into a statement about
permutation graphs.
\end{proof}

\noindent Note how this result shows that for a permutative endomorphism $\lambda_\tau$ of $C^\ast(E)$, invertibility of the induced endomorphism of the
shift space $\X_E$ can be lifted and used to constructively find an inverse to $\lambda_\tau$. 

\begin{example}\label{ex_cycles}
  Consider again the graph $E$ and the permutation $\tau$ from Example
  \ref{ex_golden_mean}. It is straightforward to check that for any path $A$ in $E_\tau^3$, the label $\LL_1(A)$ uniquely determines the source $\source_\tau(A)$,
  so the permutation graph is synchronizing in the first label. On the
  other hand, $(E_\tau, \LL_2)$ contains a cycle labeled $11$ and a
  loop labeled $1$, so by Lemma \ref{lem_synch}, the permutation graph
  is not synchronizing in the second label. Hence, $\lambda_\tau$ is not an automorphism. This fact was also observed in
  \cite{chs_endomorphisms_of_graph_algebras} without reference to the
  permutation graph.
\end{example}

\begin{remark}
\label{rem_sync}
As in Example \ref{ex_cycles}, Lemmas \ref{lem_synch}, \ref{lem_Pmu} and Theorem \ref{thm_automorphism} are
useful in general for automorphism testing.
 A forthcoming paper will present an efficient algorithm that 
 decides whether a permutative endomorphism $\lambda_\tau$ is an automorphism, 
 by way of its permutation graph.
 This is used in the computer program discussed in Section \ref{sec_bowtie}
 together with the algorithm from Section \ref{sec_Otau_alg} to perform
 exhaustive searches for permutative automorphisms at higher levels than 
 otherwise possible.
\end{remark}

\subsection{Induced endomorphism of the one- and two-sided shift.}\label{sec_induced}

When a permutative endomorphism restricts to an automorphism of $\DD_E$, 
i.e.\ when its permutation graph is synchronizing in the first label, 
it induces an endomorphism $\phi_\tau^+$ on the one-sided edge shift $\X_E^+$
through the identification of $z^+ \in \X^+_E$ with the map $\phi_{z^+} \colon
\DD_E \to \C$. This in turn induces an endomorphism $\phi_\tau$ on the
two-sided edge shift $\X_E$.

\begin{prop}\label{prop_one-sided}
  Let $\lambda_\tau$ be a permutative endomorphism at level $k$ for
  which the permutation graph is synchronizing in the first label, and
  let $\phi_\tau^+$ be the induced endomorphism of the one-sided edge
  shift $\X^+_E$. 
  Given $x^+ \in \X^+_E$, let $\beta \in E^{k-1}$ be the
  source of the unique infinite path in $E^*_\tau$ with
  first label $x^+$, and let $y^+$ be the second label of this path. Then 
  $\phi_\tau^+(x^+) = \beta y^+$.
  \end{prop}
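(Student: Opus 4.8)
The plan is to unwind the definitions and trace a point $x^+ \in \X^+_E$ through the chain of identifications $\X^+_E \leftrightarrow \widehat{\DD_E}$, $\lambda_\tau|_{\DD_E} \leftrightarrow \phi^+_\tau$, and the action of $\lambda_\tau$ on diagonal projections computed via the permutation graph in Proposition \ref{prop_action_on_path}. Concretely, $\phi^+_\tau$ is defined so that $\phi_{\phi^+_\tau(x^+)} = \phi_{x^+} \circ \lambda_\tau|_{\DD_E}$ on $\DD_E$ (this is the standard contravariant correspondence between an endomorphism of a commutative $C^\ast$-algebra and the induced map on its spectrum; the direction is fixed by the fact that $\lambda_\tau|_{\DD_E}$ is an automorphism by Lemma \ref{lem_Pmu}, so $\phi^+_\tau$ is a genuine self-map of $\X^+_E$). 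Since the spectrum identification is characterized by the values $\phi_{z^+}(P_\mu)$ for $\mu \in E^\ast$, it suffices to show that for the candidate point $z^+ := \beta y^+$ one has $\phi_{z^+}(P_\mu) = \phi_{x^+}(\lambda_\tau(P_\mu))$ for every $\mu \in E^\ast$, i.e.\ that $\beta y^+$ has $\mu$ as a prefix if and only if $\phi_{x^+}(\lambda_\tau(P_\mu)) = 1$.

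The key computational step is to evaluate $\lambda_\tau(P_\mu)$ using the permutation graph. It is enough to treat $\mu$ with $|\mu| \geq k$, since any shorter prefix condition is implied by (and implies, after extending) the longer ones, and $P_\mu = \sum_{\mu\mu' \in E^{|\mu|+1}} P_{\mu\mu'}$ lets us pass between levels. For $|\mu| \geq k$, write $\mu = \mu_{1\ldots k-1}\,\mu_{k\ldots|\mu|}$, let $\nu$ and $\alpha$ be as in the statement (the first label and range of the unique path in $E^\ast_\tau$ with source $\mu_{1\ldots k-1}$ and second label $\mu_{k\ldots|\mu|}$, which exists and is unique by Lemma \ref{lem_Etau_properties}). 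Exactly as in the computation in the proof of Lemma \ref{lem_Pmu}, Proposition \ref{prop_action_on_path} gives
\begin{displaymath}
\lambda_\tau(P_\mu) = \lambda_\tau(S_\mu S_\mu^\ast) = \sum_{i,j} S_\nu S_{\gamma_i} S_{\eta_{ij}} S_{\beta_{ij}}^\ast S_{\eta_{ij}}^\ast S_{\gamma_i}^\ast S_\nu^\ast,
\end{displaymath}
where $\{\gamma_i\}$ are the first labels of paths of length $l$ (the synchronization delay) out of $\alpha$ in $E_\tau$, and $\{\eta_{ij}\}, \{\beta_{ij}\}$ are as there; equivalently, after cancellation this is a sum of diagonal projections $\sum_i S_{\nu\gamma_i} P_{\alpha_i} S_{\nu\gamma_i}^\ast$ over the paths $\nu\gamma_i$ that are exactly the first labels of length-$(|\mu|-k+1+l)$ paths in $E_\tau$ starting at $\mu_{1\ldots k-1}$ whose second label begins with $\mu_{k\ldots|\mu|}$. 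The point is that $\lambda_\tau(P_\mu)$ is a sum of diagonal projections $P_\rho$ ranging over a set of paths $\rho$ in $E$, and $\phi_{x^+}(\lambda_\tau(P_\mu)) = 1$ iff one of these $\rho$ is a prefix of $x^+$.

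Finally I would match this up with the prefix condition on $\beta y^+$. By left-synchronization and left-resolvingness of $(E_\tau,\LL_1)$, the unique infinite path in $E^\ast_\tau$ with first label $x^+$ (whose source is $\beta$ and whose second label is $y^+$) is compatible with the finite paths just described: $\mu$ is a prefix of $\beta y^+$ precisely when $\beta = \mu_{1\ldots k-1}$ and $y^+_{1\ldots|\mu|-k+1} = \mu_{k\ldots|\mu|}$, which by Lemma \ref{lem_Etau_properties} happens precisely when the length-$(|\mu|-k+1)$ prefix of the infinite $E_\tau$-path determined by $x^+$ has source $\mu_{1\ldots k-1}$ and second label $\mu_{k\ldots|\mu|}$ — i.e.\ precisely when the prefix $\nu$ of $x^+$ arising above actually is a prefix of $x^+$ with the right length. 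Thus $\phi_{x^+}(\lambda_\tau(P_\mu))=1$ iff $\mu$ is a prefix of $\beta y^+$, as needed. The main obstacle is bookkeeping: one must carefully check that the orientation of the induced map $\phi^+_\tau$ is the one that makes $\beta y^+$ (rather than something on the "domain" side) the correct answer, and that the cancellations in the Proposition \ref{prop_action_on_path} computation leave exactly the diagonal projections indexed by the paths compatible with $\beta y^+$; once the level-$k$ case is done, passing to general $\mu$ is routine via the Cuntz–Krieger relation for $P_\mu$.
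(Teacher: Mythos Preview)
Your overall strategy---reduce to Gelfand characters and compute via Proposition~\ref{prop_action_on_path}---is exactly the paper's (two-sentence) approach. But the orientation worry you flag is not just bookkeeping; as written, your argument has the direction wrong in two places, and only by accident do they almost cancel.

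First, the convention $\phi_{\phi^+_\tau(x^+)} = \phi_{x^+}\circ\lambda_\tau$ does \emph{not} yield $\phi^+_\tau(x^+)=\beta y^+$. A quick test in $\mathcal O_2$ with $k=2$ and $\tau=(aa,ab)$: the permutation graph has edges $a\llarc{a}{b}a$, $a\llarc{a}{a}b$, $b\llarc{b}{a}a$, $b\llarc{b}{b}b$, so for $x^+=aaa\cdots$ one finds $\beta=a$, $y^+=bbb\cdots$, hence $\beta y^+=abbb\cdots$. But $\lambda_\tau(P_{abb})=P_{aab}$, so under your convention $\phi_{z^+}(P_{abb})=\phi_{x^+}(P_{aab})=0$, meaning $abb$ is \emph{not} a prefix of $z^+$---contradiction. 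The identity that actually works is $\phi_{x^+}=\phi_{\phi^+_\tau(x^+)}\circ\lambda_\tau$, i.e.\ one should check $\phi_{x^+}(P_\gamma)=\phi_{\beta y^+}(\lambda_\tau(P_\gamma))$.

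Second, your displayed expression for $\lambda_\tau(P_\mu)$ is lifted from the proof of Lemma~\ref{lem_Pmu}, but that proof constructs a \emph{preimage} of $P_\mu$ (an element mapping to $P_\mu$), not its image. The genuine image, straight from Proposition~\ref{prop_action_on_path}, is $\lambda_\tau(P_\gamma)=\sum_i P_{\beta_i\delta_i}$ where $A_i$ ranges over paths in $E_\tau$ with \emph{first label} $\gamma$, $\beta_i=\source_\tau(A_i)$, $\delta_i=\LL_2(A_i)$; there is no role for ``the path with source $\mu_{1\ldots k-1}$ and second label $\mu_{k\ldots|\mu|}$'' in computing the forward image.

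With both corrections in place the verification is clean: $\gamma$ is a prefix of $x^+$ iff the first $|\gamma|$ edges of the unique infinite $E_\tau$-path with first label $x^+$ form a path $A$ with $\LL_1(A)=\gamma$, and then $\source_\tau(A)\,\LL_2(A)=\beta\,y^+_{1\ldots|\gamma|}$ is a prefix of $\beta y^+$; conversely right-resolvingness of $\LL_2$ forces any such $A$ to be this initial segment. This is precisely the content the paper compresses into ``the result follows from Proposition~\ref{prop_action_on_path} and the identification''.
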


\begin{proof}
  Synchronization in the first label guarantees that there is a unique
  right-infinite path in the permutation graph with first label
  $x^+$. The result follows from Proposition \ref{prop_action_on_path}
  and the identification of $z^+ \in \X^+_E$ with the map $\phi_{z^+}
  \colon \DD_E \to \C$.
\end{proof}

\noindent As described in Section \ref{sec_textile}, the textile system $T_\tau$ corresponding to a permutation graph codes an endomorphism $\phi_{T_\tau}$ of $\X_E$. As an immediate consequence of the previous result, this endomorphism can be linked to the natural endomorphism of $\X_E$ induced by $\lambda_\tau$:

\begin{cor}\label{cor_two-sided}
  Let $\lambda_\tau$ be a permutative endomorphism at level $k$ for
  which the permutation graph is synchronizing in the first label, let
  $\phi_\tau$ be the induced endomorphism of the two-sided edge shift
  $\X_E$, and let $\phi_{T_\tau}$ be the
  endomorphism coded by $T_\tau$. Then $\phi_\tau(x) = \sigma^{-k+1}
  \circ \phi_{T_\tau}$.
\end{cor}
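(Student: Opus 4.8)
The plan is to unwind both sides of the identity in Corollary~\ref{cor_two-sided} in terms of the single bi-infinite path over $E_\tau$ attached to a point of $\X_E$, and then to read the shift off from the length of the source-word produced by Proposition~\ref{prop_one-sided}. First I would record what $\phi_{T_\tau}$ is. Since the permutation graph is synchronizing in the first label, $T_\tau$ is one-sided $1$--$1$ by Lemma~\ref{lem_Pmu}, and $(E_\tau,\LL_1)$ is left-resolving by Corollary~\ref{cor_Etau_resolving}, so by Lemma~\ref{lem_synch} the one-block code $\phi_1\colon\X_{E_\tau}\to\X_E$ induced by $\LL_1$ is a conjugacy; hence $\phi_{T_\tau}=\phi_2\circ\phi_1^{-1}$ is defined, with $\phi_2$ the one-block code induced by $\LL_2$ and both codomains equal to $\X_E$ by Proposition~\ref{prop_same_shift}. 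Thus, for $x\in\X_E$, writing $a=\phi_1^{-1}(x)\in\X_{E_\tau}$ for the unique bi-infinite path over $E_\tau$ with $\LL_1(a_i)=x_i$ for all $i$, one has $\phi_{T_\tau}(x)_i=\LL_2(a_i)$ for every $i$.

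Next I would bring in the one-sided map. For each $n$ the right-infinite path $x_n x_{n+1}\cdots$ is the first label of exactly the tail $a_n a_{n+1}\cdots$ of $a$ — this is the uniqueness in the definition of $\phi_1^{-1}$, restricted to a half-line — so Proposition~\ref{prop_one-sided} gives $\phi_\tau^+(x_n x_{n+1}\cdots)=\source_\tau(a_n)\,\LL_2(a_n)\LL_2(a_{n+1})\cdots$. The crucial feature is that the prefix $\source_\tau(a_n)$ is a path in $E^{k-1}$, hence of length exactly $k-1$ regardless of $n$; after deleting it the word read off is precisely the right-infinite second-label word $\LL_2(a_n)\LL_2(a_{n+1})\cdots$. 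In other words $\sigma^{k-1}\circ\phi_\tau^+$ carries the tail of $x$ starting at position $n$ to the tail of $(\LL_2(a_i))_i$ starting at position $n$; in particular $\sigma^{k-1}\circ\phi_\tau^+$ is shift-commuting, and this is exactly what lets $\phi_\tau^+$ pass to a well-defined endomorphism $\phi_\tau$ of the two-sided shift, the passage absorbing the length-$(k-1)$ prefix into the left-hand infinity. Matching this two-sided extension coordinatewise against the displayed formula — again using that $a$ restricts on each half-line to the path supplied by Proposition~\ref{prop_one-sided} — identifies it with the map $i\mapsto\LL_2(a_i)$, i.e.\ with $\phi_2\circ\phi_1^{-1}=\phi_{T_\tau}$. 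Undoing the $\sigma^{k-1}$ then yields $\phi_\tau(x)=\sigma^{-k+1}(\phi_{T_\tau}(x))$ for all $x\in\X_E$.

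The routine content — that $\phi_1$ is a conjugacy, that $\phi_{T_\tau}=\phi_2\circ\phi_1^{-1}$, and the substitution of Proposition~\ref{prop_one-sided} — causes no trouble. The one place that demands care, and the main obstacle, is the index bookkeeping in pulling the one-sided statement up to the two-sided shift: one must fix a convention for the identification of $\X_E$ with the inverse limit $\varprojlim(\X^+_E,\sigma)$, check that the transient length-$(k-1)$ prefixes $\source_\tau(a_n)$ genuinely wash out as $n\to-\infty$ rather than leaving a residue (this is where left-synchronization is doing its work, gluing all the half-line paths into the one bi-infinite path $a$), and verify that the stable word $(\LL_2(a_i))_i$ ends up displaced by exactly $k-1$ coordinates in the direction giving $\sigma^{-k+1}$ rather than $\sigma^{+k-1}$. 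I expect this to be a short but slightly fiddly computation; everything else is a direct unwinding of the definitions.
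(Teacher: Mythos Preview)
Your argument is correct and is precisely the unwinding the paper has in mind: the paper gives no proof at all beyond declaring the corollary ``an immediate consequence of the previous result'' (Proposition~\ref{prop_one-sided}), so your careful identification of $\phi_{T_\tau}(x)_i = \LL_2(a_i)$ and the index bookkeeping for the length-$(k-1)$ prefix is exactly what fills in that gap. Your observation that $\phi_\tau^+$ itself need not commute with $\sigma$---only $\sigma^{k-1}\circ\phi_\tau^+$ does---and that the two-sided extension absorbs the transient prefixes $\source_\tau(a_n)$ is in fact the substance of the corollary, which the paper leaves entirely to the reader.
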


\subsection{Composition of permutation graphs.}\label{sec_composition}
Let $\lambda_\tau$ and $\lambda_\pi$ be permutative endomorphisms at
level $l$ and $k$, respectively. Given $e \in E^1$, consider all
$\alpha_i,\beta_i \in E^{k-1}$ and $f_i \in E^1$ for which $E_\tau$
contains an edge $\beta_i \llarc{e}{f_i} \alpha_i$.
%
%
%
By Lemma \ref{lem_image_Se}, $\lambda_\pi(S_e) = \sum_i S_{\beta_i} S_{f_i} S_{\alpha_i}$.

For each $i$, let $j$ enumerate the pairs of paths $A_{ij}, B_{ij}$ over $(E_\tau,\LL_\tau)$ for which
\begin{itemize}
\item the first label of $A_{ij}$ is $\alpha_i$,
\item the first label of $B_{ij}$ is $\beta_i f_i$, and
\item $A_{ij}$ and $B_{ij}$ have the same range.
\end{itemize}
For each $j$, 
let $\delta_{ij}$ be the source of $A_{ij}$,
let $\gamma_{ij}$ be the source of $B_{ij}$,
let $\alpha_{ij}'$ be the second label of $A_{ij}$, and 
let $\beta_{ij}' g_{ij}$ be the second label of $B_{ij}$.
%
Such a pair of paths is illustrated here:
\begin{center}
  \begin{tikzpicture}
  [bend angle=10,
  clearRound/.style = {circle, inner sep = 0pt, minimum size = 17mm},
  clear/.style = {rectangle, minimum width = 5 mm, minimum height = 5 mm, inner sep = 0pt},  
  greyRound/.style = {circle, draw, minimum size = 1 mm, inner sep =
    0pt, fill=black!10},
  grey/.style = {rectangle, draw, minimum size = 6 mm, inner sep =
    1pt, fill=black!10},
  white/.style = {rectangle, draw, minimum size = 6 mm, inner sep =
    1pt},
  to/.style = {->, shorten <= 1 pt, >=stealth', semithick}]  
  
  \node[grey] (gamma) at (0,-1.5) {$\gamma_{ij}$};  
  \node[grey] (M1) at (4,-1.5) {}; 
  \node[grey] (M2) at (6.2,-1.5) {};  
  \node[grey] (delta) at (10.2,-1.5) {$\delta_{ij}$};  

  \draw[to] (gamma) to node[auto] {$[\beta_i,\beta_{ij}']$} (M1);
  \draw[to] (M1) to node[auto] {$[f_i,g_{ij}]$} (M2);
  \draw[to] (delta) to node[auto,swap] {$[\alpha_i,\alpha_{ij}']$} (M2);
  
\end{tikzpicture}
\end{center}
By Proposition \ref{prop_action_on_path}, 
\begin{displaymath}
\lambda_\tau(\lambda_\pi(S_e))
  = \sum_{i,j} S_{\gamma_{ij}} S_{\beta'_{ij}} 
                     S_{g_{ij}} 
                     S_{\alpha'_{ij}}^\ast S_{\delta_{ij}}^\ast,
\end{displaymath}
so the permutation graph of $\lambda_\tau \circ \lambda_\pi$ will have vertices and edges as illustrated here:
\begin{center}
\begin{tikzpicture}
  [bend angle=10,
  clearRound/.style = {circle, inner sep = 0pt, minimum size = 17mm},
  clear/.style = {rectangle, minimum width = 5 mm, minimum height = 5 mm, inner sep = 0pt},  
  greyRound/.style = {circle, draw, minimum size = 1 mm, inner sep =
    0pt, fill=black!10},
  grey/.style = {rectangle, draw, minimum size = 6 mm, inner sep =
    1pt, fill=black!10},
  white/.style = {rectangle, draw, minimum size = 6 mm, inner sep =
    1pt},
  to/.style = {->, shorten <= 1 pt, >=stealth', semithick}]  
  
  \node[grey] (gammabeta) at (0,-3) {$\gamma_{ij} \beta'_i$};  
  \node[grey] (deltaalpha) at (4,-3) {$\delta_{ij} \alpha'_i$};  
  \draw[to] (gammabeta) to node[auto] {$[e,g_{ij}]$} (deltaalpha);  
  
\end{tikzpicture}
\end{center}
Note that this has the form of a permutation graph for a permutation at
level $k+l-1$. The corresponding permutation is determined by this
permutation graph, but it is not easily computed directly from $\tau$
and $\pi$ without going through this process. The following result
summarizes this discussion:

\begin{prop}
  Let $\lambda_\tau$ and $\lambda_\pi$ be permutative
  endomorphisms. Then the permutation graph of $\lambda_\tau \circ
  \lambda_\pi$ is the labeled graph constructed above.
\end{prop}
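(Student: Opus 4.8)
The plan is to take the computation carried out just above as the heart of the argument and to supply the two things it does not by itself settle: that the labeled graph produced really is a permutation graph, and that the permutative endomorphism it encodes is precisely $\lambda_\tau\circ\lambda_\pi$. First I would rerun the displayed calculation. Applying Lemma~\ref{lem_image_Se} to $\lambda_\pi$ and then Proposition~\ref{prop_action_on_path} to $\lambda_\tau$ on each of the paths $\beta_i f_i$ and $\alpha_i$, and collapsing the resulting double sum by means of $S_\mu^\ast S_\nu=\delta_{\mu,\nu}P_{\range(\mu)}$ for $\mu,\nu\in E^{l-1}$ (together with $P_v S_e=S_e$ when $\source(e)=v$), yields
\[
(\lambda_\tau\circ\lambda_\pi)(S_e)=\sum_{i,j}S_{\gamma_{ij}}S_{\beta'_{ij}}S_{g_{ij}}S_{\alpha'_{ij}}^\ast S_{\delta_{ij}}^\ast,
\]
with $\gamma_{ij}\beta'_{ij},\ \delta_{ij}\alpha'_{ij}\in E^{k+l-2}$ and $g_{ij}\in E^1$. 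This is exactly the shape that Lemma~\ref{lem_image_Se} assigns to the image of $S_e$ under a permutative endomorphism at level $k+l-1$, and it is what forces the labeled graph $F$ of the preceding discussion to have vertex set $E^{k+l-2}$ and an edge $\gamma_{ij}\beta'_{ij}\llarc{e}{g_{ij}}\delta_{ij}\alpha'_{ij}$ for each admissible pair $(i,j)$.

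By Proposition~\ref{prop_sufficient} it then suffices to verify that $F$ satisfies the four conditions of Lemma~\ref{lem_Etau_properties}. The two endpoint conditions, parts~\enumref{lem_Etau_properties_L1} and \enumref{lem_Etau_properties_L2}, are a routine bookkeeping check: the $E$-sources and $E$-ranges of $\gamma_{ij}\beta'_{ij}$, $g_{ij}$, and $\delta_{ij}\alpha'_{ij}$ are read off from the corresponding conditions for $E_\pi$ and $E_\tau$ (equivalently, from $p_\pi,q_\pi,p_\tau,q_\tau$ being graph homomorphisms). The genuine content lies in the bijectivity conditions, parts~\enumref{lem_Etau_properties_receive} and \enumref{lem_Etau_properties_emit}: given $e\colon s\to r$ in $E^1$ and a vertex $A=\delta\alpha'\in E^{k+l-2}$ of $F$ with $\source_E(A)=r$, one must produce a \emph{unique} edge of $F$ with first label $e$ and range $A$, and dually for a fixed source. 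I would unravel this in stages --- first recovering the path $A_{ij}$ in $E_\tau$ from the data $\bigl(\source_\tau(A_{ij}),\LL_2(A_{ij})\bigr)=(\delta,\alpha')$ by iterating Lemma~\ref{lem_Etau_properties}\enumref{lem_Etau_properties_emit}, which also recovers $\alpha_i=\LL_1(A_{ij})$; then using Lemma~\ref{lem_Etau_properties}\enumref{lem_Etau_properties_receive} for $E_\pi$ to turn $(e,\alpha_i)$ into the unique edge $\beta_i\llarc{e}{f_i}\alpha_i$; and finally using Lemma~\ref{lem_Etau_properties} for $E_\tau$ to recover $B_{ij}$ from its first label $\beta_i f_i$ and its range (forced to equal that of $A_{ij}$), hence $\gamma_{ij}\beta'_{ij}$ and $g_{ij}$. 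I expect this multi-stage diagram chase --- where the only real danger is losing track of which label is $\LL_1$ versus $\LL_2$, of $E_\pi$ versus $E_\tau$, and of the fact that the ``same range'' pairing is exactly what makes each count come out to one --- to be the main obstacle; the diagram displayed above is the bookkeeping device that keeps it organized.

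With that in place, Proposition~\ref{prop_sufficient} provides an endpoint-fixing permutation $\rho\in\Perm(E^{k+l-1})$ with $E_\rho=F$, and Lemma~\ref{lem_image_Se} shows that the associated permutative endomorphism satisfies $\lambda_\rho(S_e)=\sum_{i,j}S_{\gamma_{ij}}S_{\beta'_{ij}}S_{g_{ij}}S_{\alpha'_{ij}}^\ast S_{\delta_{ij}}^\ast=(\lambda_\tau\circ\lambda_\pi)(S_e)$ for every $e\in E^1$. Since $E$ has no sources, $P_v=S_e^\ast S_e$ for any $e$ with $\range(e)=v$, so $\{S_e\mid e\in E^1\}$ generates $C^\ast(E)$; two $\ast$-homomorphisms agreeing on a generating set coincide, hence $\lambda_\rho=\lambda_\tau\circ\lambda_\pi$. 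In particular $\lambda_\tau\circ\lambda_\pi$ is permutative at level $k+l-1$ and its permutation graph is the labeled graph $F$ constructed above.
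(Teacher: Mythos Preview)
Your proposal is correct and follows the same line as the paper: the displayed computation of $(\lambda_\tau\circ\lambda_\pi)(S_e)$ is the whole argument, and the proposition is stated simply as a summary of that discussion without a separate proof. You go further than the paper by explicitly verifying, via Proposition~\ref{prop_sufficient} and a careful unwinding of Lemma~\ref{lem_Etau_properties} for $E_\pi$ and $E_\tau$, that the resulting labeled graph really is a permutation graph and that the corresponding endomorphism agrees with $\lambda_\tau\circ\lambda_\pi$ on generators; the paper leaves these checks to the reader with the remark that ``this has the form of a permutation graph for a permutation at level $k+l-1$.''
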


\section{Finding permutative automorphisms}
\label{sec_Etau_alg}

This section describes a recursive algorithm that, given a graph $E$
and a level $k$, finds all level-$k$ permutative automorphisms of $C^\ast(E)$.  Only
a high-level description is given here; a detailed description of the
algorithm will be published separately. 
The overall structure follows two simple mutually recursive functions
$\completeEtau$ and $\completeletter$, defined in Equation \eqref{eq:compute-Etau}, that
build all valid extensions to a partial graph $\GG$ edge by edge.
It will be shown below that $\completeEtau\left(0,\GG_0\right)$ precisely constructs 
the set of permutation graphs of level $k$ when $\GG_0$ is the empty labeled graph with vertex set $E^{k-1}$.
For each $r\in E^0$, write $E_{r\to\ast}^{k-1} = \setof{\alpha^n_r}{ 0\le n \le |E^{k-1}_{r\to\ast}|-1}$
and $E^1 = \setof{e_m}{0\le m \le |E^1|-1}$. Let $r_m = \range_E(e_m)$ for
each $m$, and let $\GG = (G,\LL)$ denote a partially 
completed permutation graph. 
\begin{equation}
  \label{eq:compute-Etau}
  \begin{split}
    \completeEtau(\vert E^1 \vert,\GG) &= \{ \GG \}\\
    \completeEtau(m,\GG) &= \completeletter(m,0,\GG)\\
    \\
    \completeletter(m,|E^{k-1}_{r\to\ast}|,\GG) &= \completeEtau(m+1,\GG)\\
    \completeletter(m,n,\GG) &= \disjointunion_{
      \begin{smallmatrix}
      {\mu\colon \beta\,\llarc{e_m}{f}\,\alpha_{r_m}^n}\\
      {\isvalid{\addedge{\GG}{\mu}}}
    \end{smallmatrix}
  }
    \completeletter(m,n+1,\addedge{\GG}{\mu}) 
  \end{split}
\end{equation} 
In the above, ``$\uplus$'' denotes disjoint set union, and $\GG\oplus\mu$ is the labeled
graph resulting from extending $\GG$ with the labeled edge $\mu$. Each edge $\mu$ is added only if $\GG\oplus\mu$ satisfies a predicate $\isvalid{\cdot}$ defined below, 
which guarantees that the construction only results in permutation graphs for automorphisms. 
%
%
By Lemma \ref{lem_Etau_properties}, each vertex $\alpha\in
E^{k-1}_{r\to\ast}$ must receive exactly one edge with first label $e$
for each $e\colon s\to r$ in $E^1$. In the algorithm defined above, the graphs are constructed one
label $e_m\colon s_m\to r_m$ at a time, placing for each destination
vertex $\alpha^n_{r_m}$ the $e_m$-labeled edge incident to it: For
each $\beta$ and $f$ for which adding the edge
$\mu\colon\beta\llarc{e_m}{f}\alpha^n_{r_m}$ results in a valid
subgraph of a permutation graph, the recursion proceeds.  Dead ends result in the empty set,
but if all labels are successfully completed, the singleton set
containing the completed graph is returned. At each recursion level,
the result is the disjoint union of the results from the levels below.

The predicate $\isvalid{\cdot}$ ensures that it is exactly the
permutative automorphisms that are constructed. It guarantees that only
edges that satisfy two concurrent conditions are placed: The resulting
graph must be a subgraph of a permutation graph and, by Theorem
\ref{thm_automorphism}, it must also synchronize in both labels in
order to complete to an automorphism. 
The following proposition, which is an easy corollary to Proposition
\ref{prop_sufficient}, characterizes the subgraphs of permutation graphs:

\begin{prop}\label{prop_Etau_subgraph}
  Let $(G,\LL)$ be a labeled graph with node set $E^{k-1}$ and label alphabet 
  $E^1\times E^1$. $(G,\LL)$ is subgraph of a permutation graph $(E_\tau,\LL_\tau)$ for some endpoint-fixing
  permutation $\tau\in\Perm(E^k)$ if and only if
  \begin{enumerate}
  \item \label{prop_Etau_subgraph_i}
    $(G,\LL)$ satisfies parts (\ref{lem_Etau_properties_L1}) and (\ref{lem_Etau_properties_L2}) of  Lemma \ref{lem_Etau_properties}.
  \item \label{prop_Etau_subgraph_ii}
    For every edge $e\colon s\to r$ in $E^1$ and $\alpha\in E^{k-1}_{r\to\ast}$, there is at most one
    edge $\mu\colon \beta \llarc{e}{f} \alpha$ in $(G,\LL)$ with $f\in E^1$ and $\beta\in E^{k-1}$.
  \item \label{prop_Etau_subgraph_iii}
    For every edge $f\colon q\to t$ in $E^1$ and $\beta\in E^{k-1}_{\ast\to q}$, there is at most one 
    edge $\mu\colon \beta \llarc{e}{f} \alpha$ in $(G,\LL)$ with $e\in E^1$ and $\alpha\in E^{k-1}$.
  \end{enumerate}
\end{prop}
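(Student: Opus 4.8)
The plan is to derive the statement from Proposition~\ref{prop_sufficient}: I would read a partial endpoint-fixing permutation of $E^k$ off the edges of $(G,\LL)$, extend it to an honest permutation $\tau\in\Perm(E^k)$, and check that $(G,\LL)$ sits inside the permutation graph $(E_\tau,\LL_\tau)$.

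The forward implication costs nothing. If $(G,\LL)$ is a subgraph of some $(E_\tau,\LL_\tau)$ --- and since both graphs have vertex set $E^{k-1}$ this just means $G^1\subseteq E_\tau^1$ compatibly --- then condition \enumref{prop_Etau_subgraph_i} is exactly parts \enumref{lem_Etau_properties_L1} and \enumref{lem_Etau_properties_L2} of Lemma~\ref{lem_Etau_properties} read off on those edges of $E_\tau$ that lie in $G$, while conditions \enumref{prop_Etau_subgraph_ii} and \enumref{prop_Etau_subgraph_iii} are the weakenings of parts \enumref{lem_Etau_properties_receive} and \enumref{lem_Etau_properties_emit} obtained by passing from $E_\tau$ to the subgraph $G$ (``exactly one'' edge in $E_\tau$ forces ``at most one'' in $G$).

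For the converse, assume the three listed conditions. Mimicking Definition~\ref{def_Etau}, I would set $\tau_0(e\alpha)=\beta f$ for every edge $\mu\colon\beta\llarc{e}{f}\alpha$ of $G$, obtaining a partial map on $E^k$, and then check that it is a well-defined endpoint-preserving injection. Condition \enumref{prop_Etau_subgraph_i} does three jobs here: it gives $\source_E(\alpha)=\range_E(e)$ and $\range_E(\beta)=\source_E(f)$, so $e\alpha$ and $\beta f$ really are paths in $E^k$; and it gives $\source_E(\beta)=\source_E(e)$ and $\range_E(\alpha)=\range_E(f)$, so $e\alpha$ and $\beta f$ lie in the same block $E^k_{u\to v}$, i.e.\ $\tau_0$ preserves endpoints on its domain. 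Condition \enumref{prop_Etau_subgraph_ii} says an edge $\mu$ of $G$ is determined by its first label $e$ together with its range vertex $\alpha$, which makes $\tau_0$ single-valued; condition \enumref{prop_Etau_subgraph_iii} says $\mu$ is determined by its second label $f$ together with its source vertex $\beta$, that is by $\beta f$, which makes $\tau_0$ injective.

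Finally, since $E$ is finite each block $E^k_{u\to v}$ is finite, so the partial injection obtained by restricting $\tau_0$ to $E^k_{u\to v}$ extends to a bijection of that set; doing this for all pairs $(u,v)$ produces an endpoint-fixing $\tau\in\Perm(E^k)$ with $\tau_0\subseteq\tau$. Tracing Definition~\ref{def_Etau} once more, the edge $e\alpha\in E^k=E_\tau^1$ runs from $\source_\tau(e\alpha)=\beta$ to $\range_\tau(e\alpha)=\alpha$ in $E_\tau$ and carries the label $[\,\LL_1(e\alpha),\LL_2(e\alpha)\,]=[e,f]$, so it is precisely the edge $\mu$ of $G$; as the two labeled graphs also share the vertex set $E^{k-1}$, this exhibits $(G,\LL)$ as a subgraph of $(E_\tau,\LL_\tau)$, which is a permutation graph by Proposition~\ref{prop_sufficient} (or directly by Definition~\ref{def_Etau}). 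I do not foresee a genuine obstacle: the only thing requiring care is the bookkeeping in the previous two paragraphs, matching each hypothesis to the precise well-typedness, single-valuedness, or injectivity fact it supplies, together with the elementary observation just used --- a partial injection between two finite sets of equal cardinality extends to a bijection between them.
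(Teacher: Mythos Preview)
Your proposal is correct and follows exactly the route the paper indicates: it states the proposition as ``an easy corollary to Proposition~\ref{prop_sufficient}'' without giving further details, and your argument---reading a partial endpoint-fixing injection off the edges of $(G,\LL)$, extending it blockwise to a permutation, and then invoking Definition~\ref{def_Etau}/Proposition~\ref{prop_sufficient}---is precisely the intended unpacking. The bookkeeping you flag (matching each hypothesis to well-definedness, injectivity, and endpoint preservation, and using finiteness of the blocks $E^k_{u\to v}$ to extend) is all that is needed.
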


This yields a simple test for whether a partially completed graph is
the subgraph of some permutation graph that corresponds to an automorphism:
\begin{definition}
  \label{def_isvalid}
  Let $\GG = (G,\LL)$ be a labeled graph with node set $E^{k-1}$ and label alphabet $E^1\times E^1$.
  $\isvalid{\GG}$ is true if $\GG$ synchronizes in both
  labels and satisfies the conditions
  of Proposition \ref{prop_Etau_subgraph}.
\end{definition}

The correctness of the recursive construction of all permutative automorphisms
depends on the following property of the $\isvalid{\cdot}$ predicate:
\begin{definition} 
  \label{def_local_property}
  A predicate $\predicate{p}$ on the set of labeled graphs is said to
  be \emph{inherited} if $\predicate{p}(H,\LL) \implies 
  \predicate{p}(G,\LL) $ for every labeled graph $(H,\LL)$ and 
  subgraph $G$ of $H$.
\end{definition}
\noindent%
That is, the property is invariant throughout an edge-by-edge
construction of the labeled graph. As soon as a labeled edge is placed
that makes the partially constructed graph invalid, the entire search
tree below it may be safely discarded as it can be contained in no
valid completed graph. Conversely, any valid completed graph is reached
from the empty graph (or any other subgraph) by a sequence of locally
valid edge placements, the order of which is unimportant.

It is easy to verify that Properties
\enumref{prop_Etau_subgraph_i}--\enumref{prop_Etau_subgraph_iii} of
Proposition \ref{prop_Etau_subgraph} are inherited properties. Left- and
right-synchronization are inherited properties by way of Lemma
\ref{lem_synch}, since the set of cycles over $H$ is a subset of the
cycles over $G$ when $H$ is a subgraph of $G$.

\begin{thm}
  \label{thm_Pgraph_construct}
  Let  $\GG_0$ be the labeled graph with vertex set $E^{k-1}$ and no edges. Then
  $\completeEtau\left(0,\GG_0\right)$ is the set of permutation
  graphs corresponding to all level $k$ permutative automorphisms.
\end{thm}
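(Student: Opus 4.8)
The plan is to show that the recursion $\completeEtau(0,\GG_0)$ enumerates exactly the permutation graphs of level-$k$ permutative automorphisms, by separating the argument into two inclusions: completeness (every such permutation graph is produced) and soundness (everything produced is such a permutation graph). Both directions hinge on the fact, already noted in the excerpt, that $\isvalid{\cdot}$ is an inherited predicate (Definition \ref{def_local_property}) and that $\isvalid{\cdot}$ holds for a labeled graph $\GG$ precisely when $\GG$ is a subgraph of some permutation graph corresponding to an automorphism --- i.e.\ when $\GG$ satisfies the conditions of Proposition \ref{prop_Etau_subgraph} and synchronizes in both labels. The key structural observation is that the recursion visits, in a fixed order, every ``slot'' $(e_m,\alpha^n_{r_m})$ --- one for each pair consisting of an edge $e_m\colon s_m\to r_m$ of $E^1$ and a vertex $\alpha^n_{r_m}\in E^{k-1}_{r_m\to\ast}$ --- and at each slot either places exactly one labeled edge $\mu\colon\beta\llarc{e_m}{f}\alpha^n_{r_m}$ into the graph (if the result passes $\isvalid{\cdot}$) or prunes that branch.

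For \textbf{soundness}, I would argue that any graph $\GG$ appearing in a singleton output $\{\GG\}=\completeEtau(|E^1|,\GG)$ satisfies $\isvalid{\GG}$ --- this follows because the only way to reach the base case $\completeEtau(|E^1|,\cdot)$ is through a chain of edge additions each of which preserved validity, and $\isvalid{\cdot}$ was re-checked at every step. Moreover, the recursion structure forces that, upon reaching the base case, \emph{every} slot $(e_m,\alpha^n_{r_m})$ has received \emph{exactly one} edge: the inner recursion $\completeletter(m,n,\GG)$ advances from $n$ to $n+1$ only after placing a $\mu$ with range $\alpha^n_{r_m}$ and first label $e_m$, and the outer recursion advances from $m$ to $m+1$ only when all $n$ for that $m$ are exhausted. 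Combined with part \enumref{prop_Etau_subgraph_ii} of Proposition \ref{prop_Etau_subgraph} (at most one such edge), this gives \emph{exactly} one edge per slot, which is precisely condition \enumref{lem_Etau_properties_receive} of Lemma \ref{lem_Etau_properties}. Together with the parts of Lemma \ref{lem_Etau_properties} already guaranteed by $\isvalid{\cdot}$ (via Proposition \ref{prop_Etau_subgraph}), the graph $\GG$ satisfies all conditions of Lemma \ref{lem_Etau_properties}, so by Proposition \ref{prop_sufficient} it is the permutation graph $(E_\tau,\LL_\tau)$ of an endpoint-fixing permutation $\tau\in\Perm(E^k)$; and since $\GG$ synchronizes in both labels, Theorem \ref{thm_automorphism} says $\lambda_\tau$ is an automorphism.

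For \textbf{completeness}, let $(E_\tau,\LL_\tau)$ be the permutation graph of a level-$k$ permutative automorphism. By Theorem \ref{thm_automorphism} it synchronizes in both labels, and by Lemma \ref{lem_Etau_properties} every slot $(e,\alpha)$ receives exactly one edge; hence $\isvalid{(E_\tau,\LL_\tau)}$ holds, and since $\isvalid{\cdot}$ is inherited, so does $\isvalid{\GG}$ for every subgraph $\GG\subseteq(E_\tau,\LL_\tau)$. I would then trace the recursion: starting from $\GG_0$, at each slot $(e_m,\alpha^n_{r_m})$ there is a unique edge $\mu$ of $(E_\tau,\LL_\tau)$ incident to $\alpha^n_{r_m}$ with first label $e_m$, the partial graph $\GG\oplus\mu$ is a subgraph of $(E_\tau,\LL_\tau)$ hence passes $\isvalid{\cdot}$, so that branch of the disjoint union survives and the recursion follows it. Since the recursion iterates over \emph{all} slots exactly once, after $|E^1|$ rounds we have reconstructed all of $(E_\tau,\LL_\tau)$ and reach the base case, which outputs $\{(E_\tau,\LL_\tau)\}$. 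Thus $(E_\tau,\LL_\tau)\in\completeEtau(0,\GG_0)$, and since distinct permutation graphs are assembled along distinct (disjoint) branches, the union is genuinely a disjoint union with no repetitions.

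The main obstacle --- or rather the point requiring the most care --- is the bookkeeping that the recursion visits each slot $(e_m,\alpha^n_{r_m})$ \emph{exactly once} and that reaching the base case is equivalent to \emph{every} slot having been filled, so that the "exactly one edge per slot" condition \enumref{lem_Etau_properties_receive} is delivered by the control flow rather than by $\isvalid{\cdot}$ (which only supplies the "at most one" half, condition \enumref{prop_Etau_subgraph_ii}). One must also confirm that condition \enumref{lem_Etau_properties_emit} --- exactly one edge per pair $(f,\beta)$ --- comes for free: the "at most one" half is condition \enumref{prop_Etau_subgraph_iii} inside $\isvalid{\cdot}$, and the "at least one" half follows from a counting argument, since filling every receive-slot places $|E^1|\cdot|E^{k-1}|$ edges in total, which is also the number of emit-slots, forcing each to be hit exactly once. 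Once this counting is pinned down, both inclusions are routine applications of Propositions \ref{prop_sufficient} and \ref{prop_Etau_subgraph} and Theorem \ref{thm_automorphism}.
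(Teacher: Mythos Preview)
Your proposal is correct and follows essentially the same two-inclusion strategy as the paper's proof: soundness via Proposition~\ref{prop_Etau_subgraph} and a counting argument to upgrade ``at most one'' to ``exactly one'' per slot, then Proposition~\ref{prop_sufficient} and Theorem~\ref{thm_automorphism}; completeness via inheritance of $\isvalid{\cdot}$ and tracing the recursion along the edges of a given permutation graph.

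One small slip to fix: the total number of edges placed is not $|E^1|\cdot|E^{k-1}|$ in general (that only holds when $E$ has a single vertex). The correct count, as in the paper, is
\[
\sum_{e\colon s\to r}\bigl|E^{k-1}_{r\to\ast}\bigr| \;=\; \sum_{f\colon q\to t}\bigl|E^{k-1}_{\ast\to q}\bigr| \;=\; |E^k|,
\]
the two sums being equal because both count $E^k$ (as $e\alpha$ and as $\beta f$ respectively). Your counting argument for condition~\enumref{lem_Etau_properties_emit} goes through once you replace the formula accordingly.
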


\begin{proof}
  Assume that $\GG\in\completeEtau\left(0,\GG_0\right)$. 
  It is immediately apparent from Equation \eqref{eq:compute-Etau} that the total number of edges in $\GG = (G,\LL)$ must be
  \[
  \left|G^1\right| 
  = 
    \sum_{
      \begin{smallmatrix}
      e\colon s\to r\\
      e \in E^1 
      \end{smallmatrix} 
    } \left|E^{k-1}_{r\to\ast}\right| 
  = 
    \sum_{
      \begin{smallmatrix}
      f\colon q\to t\\
      f \in E^1 
      \end{smallmatrix} 
    } \left|E^{k-1}_{\ast\to q}\right|. 
  \]
  Proposition
  \ref{prop_Etau_subgraph}\enumref{prop_Etau_subgraph_ii} ensures that
  for each $e\colon s\to r$, every $\alpha\in E^{k-1}_{r\to\ast}$ receives at most one edge with
  first label $e$, and the full edge count can only be achieved if
  each $\alpha$ receives exactly one such edge, fulfilling Lemma
  \ref{lem_Etau_properties}\enumref{lem_Etau_properties_receive}. Similarly,
  Proposition \ref{prop_Etau_subgraph}\enumref{prop_Etau_subgraph_iii},
  together with the second equality above, ensures that Lemma
  \ref{lem_Etau_properties}\enumref{lem_Etau_properties_emit} is
  satisfied.
  Consequently, $\GG$ satisfies all properties
  of Lemma \ref{lem_Etau_properties}, and by Proposition
  \ref{prop_sufficient}, it is a permutation graph. Since it also
  synchronizes in both labels, it represents a permutative
  automorphism by Theorem \ref{thm_automorphism}.

  Conversely, let $(E_\tau,\LL_\tau)$ be a permutative
  automorphism at level $k$.  By Proposition \ref{prop_Etau_subgraph} and the fact that $\isvalid{\cdot}$ is inherited, the edge set $E^1_\tau$ traversed in any order
  constitutes a sequence of locally valid edge placements that
  incrementally extends $\GG_0$ until $(E_\tau,\LL_\tau)$ is
  completed.  Hence, each time the last line in Equation
  \eqref{eq:compute-Etau} is reached, there is a valid edge placement to
  pick from $E^1_\tau$.  Since the procedure terminates after 
  $\sum_{e\in E} \left|E^{k-1}_{\range_E(e)\to\ast}\right| = \left|E^1_\tau\right|$
  steps, all the edges of $E^1_\tau$ are eventually placed, and thus
  $(E_\tau,\LL_\tau)\in\completeEtau\left(0,\GG_0\right)$.

  Consequently, the procedure constructs exactly the set of all level-$k$ permutative automorphisms.    
\end{proof}
From the recursion structure in Equation \eqref{eq:compute-Etau}, it is apparent that each 
permutation graph is built at most once. Because all level-$k$ permutative automorphisms are reached,
each is constructed exactly once.

It is instructive to note that the permutation graph property (Lemma
\ref{lem_Etau_properties}) and the automorphism property (Theorem
\ref{thm_automorphism}) are separate tests: Simply omitting the synchronization
test in the above procedure would instead yield the set of all
permutative endomorphisms at level $k$. While this set becomes too
large to practically compute even for small $k$, other properties than
automorphism may be filtered for, just so long as they can
be expressed in an inherited form as defined in Definition
\ref{def_local_property}.

\section{Inner equivalence, shift space equivalence, and order}
\label{sec_inner_order}
One is most often not interested in every automorphism, but only in
those that are sufficiently different to warrant distinct
consideration. Commonly, endomorphisms are considered up to inner
equivalence, i.e.\ modulo conjugation by a unitary. The present
section introduces \emph{shift space equivalence}, which groups
endomorphisms that have identical properties as dynamical systems:

\begin{definition}[Shift space equivalence]\label{def:shiftspace-eq}
  The labeled graphs $(E,\LL_E)$ and $(F,\LL_F)$ are said to be shift space equivalent if they present the same shift space $\X_{(E,\LL_E)} = \X_{(F,\LL_E)}$.
\end{definition}

Definition \ref{def:shiftspace-eq} induces an equivalence
relation on $\Perm(E^k)$ and on the corresponding permutative
endomorphisms: Two level-$k$ endpoint-fixing permutations $\tau$ and $\tau'$
are said to be shift space equivalent when their respective permutation graphs
are equivalent, i.e.\ when $\X_{(E_\tau,\LL_\tau)} = \X_{(E_{\tau'},\LL_{\tau'})}$.
For permutative endomorphisms that are synchronizing in the first label, it will be shown that 
shift space equivalence is the same as inner equivalence via a permutative unitary.


\subsection{Permuting permutations.}
Let $\tau\in\Perm(E^k)$ be endpoint-fixing. Given an endpoint-fixing
permutation $\pi\in\Perm(E^{k-1})$, the aim of the following is to
rigorously construct a new permutation $g_\pi(\tau)$, the permutation
graph of which is the graph obtained from
$(E_\tau,\LL_\tau)$ by permuting the vertices through $\pi$ while
maintaining the remaining structure. Later, it will be shown that when
$\pi$ ranges over the endpoint-fixing permutations in
$\Perm(E^{k-1})$, this construction exactly reaches all of the
level-$k$ permutations that are shift space equivalent to $\tau$ whenever $(E_\tau,\LL_\tau)$ is synchronizing in the first label.

Let $\tau \in \Perm(E^{k})$ and $\pi \in \Perm(E^{k-1})$ be endpoint-fixing. Given $e \in E^1$ and $\alpha \in E^{k-1}$ with $e \alpha \in E^k$, consider the unique $\beta \in E^{k-1}$ and $f \in E^1$ for which $\tau(e\alpha) = \beta f$. Define $g_\pi(\tau) \colon E^k \to E^k$ by 
\begin{equation}\label{eq_f_pi}
g_\pi(\tau)(e \pi(\alpha)) = \pi(\beta)f.
\end{equation}
%
The paths used in this definition are sketched in Figure \ref{fig_fpi}.

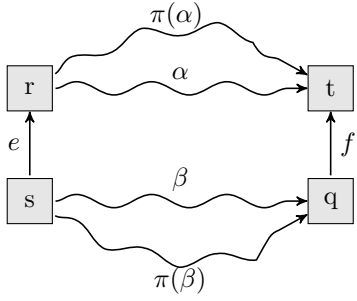
\begin{figure}
\begin{center}
\begin{tikzpicture}
  [bend angle=10,
   clearRound/.style = {circle, inner sep = 0pt, minimum size = 17mm},
   clear/.style = {rectangle, minimum width = 5 mm, minimum height = 5 mm, inner sep = 0pt},  
   greyRound/.style = {circle, draw, minimum size = 1 mm, inner sep =
      0pt, fill=black!10},
   grey/.style = {rectangle, draw, minimum size = 6 mm, inner sep =
      1pt, fill=black!10},
    white/.style = {rectangle, draw, minimum size = 6 mm, inner sep =
      1pt},
   to/.style = {->, shorten <= 1 pt, >=stealth', semithick}]
  
  \node[grey] (se) at (0,0) {s};
  \node[grey] (re) at (0,1.5) {r};
  \node[grey] (stau) at (4,0) {q};
  \node[grey] (rtau) at (4,1.5) {t};

  \draw[to] (se) to node[auto] {$e$} (re);

  \draw[to,snake it] (re) to node[auto] {$\alpha$} (rtau);
  \draw[to,snake it,bend left=30] (re) to node[auto,anchor=south] {$\pi(\alpha)$\;} (rtau); 

  \draw[to,snake it] (se) to node[auto] {$\beta$} (stau);
  
    \draw[to,snake it, bend right=30] (se) to node[auto,swap] (pibeta) {
    } (stau); 
    
    \node[clear,anchor=north] at (pibeta) {$\pi (\beta)$};

    \draw[to] (stau) to node[auto,swap] {$f$} (rtau);  
\end{tikzpicture}
\end{center}
\caption{Paths and edges in $E$ used in the construction of $g_\pi(\tau)$ and in the proof of Lemma \ref{lem_fpi}. Compare to Figure \ref{fig_Etau}. }
\label{fig_fpi}
\end{figure}

\begin{lem}\label{lem_fpi}
For each endpoint-fixing $\tau \in \Perm(E^{k})$ and $\pi\in\Perm(E^{k-1})$, $g_\pi(\tau)$ is also an endpoint-fixing permutation of  $E^{k}$, and the edges of $(E_{g_\pi(\tau)},\LL_{g_\pi(\tau)})$ are $\mu_\pi \colon \pi(\beta)\llarc{e}{f}\pi(\alpha)$ for each edge
$\mu\colon \beta\llarc{e}{f}\alpha$ of $(E_\tau,\LL_\tau)$.
\end{lem}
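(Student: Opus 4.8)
The plan is to verify in two stages: first that $g_\pi(\tau)$ is a well-defined endpoint-fixing permutation of $E^k$, and then that its permutation graph is the claimed relabeling of $(E_\tau,\LL_\tau)$ by $\pi$. For well-definedness, note that the assignment in Equation \eqref{eq_f_pi} is indexed by pairs $(e,\pi(\alpha))$ with $e\pi(\alpha)\in E^k$; since $\pi$ is a bijection of $E^{k-1}$ that preserves endpoints, as $\alpha$ ranges over all elements of $E^{k-1}_{\range_E(e)\to\ast}$ so does $\pi(\alpha)$, so every element of $E^k$ is hit exactly once as a left argument. The output $\pi(\beta)f$ lies in $E^k$ because $\range_E(\pi(\beta))=\range_E(\beta)=\source_E(f)$, using that $\beta f\in E^k$ and that $\pi$ preserves ranges. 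So $g_\pi(\tau)$ is a well-defined function $E^k\to E^k$.

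Next I would check it is an endpoint-fixing bijection. For the endpoint condition: by Lemma \ref{lem_Etau_properties}\enumref{lem_Etau_properties_L1}, $\source_E(\beta)=\source_E(e)$ and $\source_E(\alpha)=\range_E(e)$; since $\pi$ preserves sources, $\source_E(e\pi(\alpha))=\source_E(e)=\source_E(\pi(\beta))=\source_E(\pi(\beta)f)$. For the range, $\tau$ endpoint-fixing gives $\range_E(\beta f)=\range_E(e\alpha)$, i.e.\ $\range_E(f)=\range_E(\alpha)=\range_E(\pi(\alpha))$, so $\range_E(e\pi(\alpha))=\range_E(\pi(\beta)f)$. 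Thus $g_\pi(\tau)$ maps each $E^k_{u\to v}$ into itself. For bijectivity, I would exhibit the inverse explicitly: since $\tau$ is a bijection, $g_{\pi^{-1}}(\tau^{-1})$ is defined, and a direct substitution shows $g_{\pi^{-1}}(\tau^{-1})\circ g_\pi(\tau)=\id$ and vice versa; alternatively, since $g_\pi(\tau)$ maps the finite set $E^k$ into itself and is injective (distinct $(e,\pi(\alpha))$ give distinct outputs because $\tau$ is injective and $\pi$ is injective on the $\beta$-coordinate), it is a bijection.

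Finally, for the permutation graph identification I would unwind Definition \ref{def_Etau}: the edge of $(E_{g_\pi(\tau)},\LL_{g_\pi(\tau)})$ corresponding to the element $e\pi(\alpha)\in E^k=E^1_{g_\pi(\tau)}$ has range $\range_{g_\pi(\tau)}(e\pi(\alpha))=(e\pi(\alpha))_2\cdots(e\pi(\alpha))_k=\pi(\alpha)$, source $\source_{g_\pi(\tau)}(e\pi(\alpha))=g_\pi(\tau)(e\pi(\alpha))_1\cdots g_\pi(\tau)(e\pi(\alpha))_{k-1}=(\pi(\beta)f)_1\cdots(\pi(\beta)f)_{k-1}=\pi(\beta)$, first label $\LL_1(e\pi(\alpha))=e$, and second label $\LL_2(e\pi(\alpha))=g_\pi(\tau)(e\pi(\alpha))_k=(\pi(\beta)f)_k=f$. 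Hence this edge is exactly $\mu_\pi\colon\pi(\beta)\llarc{e}{f}\pi(\alpha)$, and since $e\mapsto(e,\alpha)$ with $\tau(e\alpha)=\beta f$ enumerates all edges $\mu\colon\beta\llarc{e}{f}\alpha$ of $(E_\tau,\LL_\tau)$, this is a bijection between the two edge sets of the claimed form. I do not anticipate a serious obstacle here; the only point requiring a little care is confirming that the indexing in \eqref{eq_f_pi} genuinely covers all of $E^k$ exactly once, which is the well-definedness argument above, and keeping straight which coordinate of $\pi$ acts on the source versus the range in the permutation graph.
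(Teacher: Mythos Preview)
Your proof is correct and follows the same approach as the paper's, namely a direct verification from the definitions that $g_\pi(\tau)$ is a well-defined endpoint-fixing bijection and that its permutation graph has the stated edges. The paper's own proof is a one-line appeal to Figures~\ref{fig_Etau} and~\ref{fig_fpi} and to the fact that $\pi$ and $\tau$ are endpoint-fixing; you have simply written out in full the checks that the figures are meant to make evident, including the well-definedness, injectivity, and the unwinding of Definition~\ref{def_Etau} for the permutation graph of $g_\pi(\tau)$.
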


\begin{proof}
Consider Figures \ref{fig_Etau} and \ref{fig_fpi}, and notice that $g_\pi(\tau)$ is an endpoint-fixing permutation since both $\pi$ and $\tau$ are endpoint-fixing. 
\end{proof}

\noindent In this way, $(E_\tau,\LL_\tau)$ and $(E_{g_\pi(\tau)}, \LL_{g_\pi(\tau)})$ are constructed to have the same structure, and this immediately yields the following:

\begin{cor}\label{lem_orderandsynch}
For endpoint-fixing $\tau \in \Perm(E^k)$ and $\pi \in \Perm(E^{k-1})$, $(E_\tau,\LL_\tau)$ is synchronizing in the first/second label if and only if $(E_{g_\pi(\tau)},\LL_{g_\pi(\tau)})$ is synchronizing in the first/second label.
\end{cor}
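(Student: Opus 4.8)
The plan is to show that the construction in Lemma \ref{lem_fpi} exhibits $(E_{g_\pi(\tau)},\LL_{g_\pi(\tau)})$ as a relabeling of $(E_\tau,\LL_\tau)$ by the bijection $\pi$ on the common vertex set $E^{k-1}$, and then to invoke the characterization of synchronization via cycles from Lemma \ref{lem_synch}. Since relabeling vertices by a bijection is an isomorphism of labeled graphs, and left/right-synchronization is an isomorphism invariant, the equivalence follows.

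First I would record the isomorphism explicitly. By Lemma \ref{lem_fpi}, the map sending each vertex $\beta\in E_\tau^0 = E^{k-1}$ to $\pi(\beta)$ and each edge $\mu\colon\beta\llarc{e}{f}\alpha$ of $(E_\tau,\LL_\tau)$ to $\mu_\pi\colon\pi(\beta)\llarc{e}{f}\pi(\alpha)$ is a bijection on both vertices and edges between $(E_\tau,\LL_\tau)$ and $(E_{g_\pi(\tau)},\LL_{g_\pi(\tau)})$ that commutes with $\source_\tau,\range_\tau$ (up to applying $\pi$) and preserves the edge labels $\LL_1$ and $\LL_2$ exactly. In particular it restricts to a length-preserving bijection between the cycles of $E_\tau$ and the cycles of $E_{g_\pi(\tau)}$ which preserves both $\LL_1$-labels and $\LL_2$-labels, and sends distinct cycles to distinct cycles.

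Next I would reduce to cycle data. Since $g_\pi(\tau)$ is itself an endpoint-fixing permutation of $E^k$ by Lemma \ref{lem_fpi}, Corollary \ref{cor_Etau_resolving} applies to it as well, so $(E_{g_\pi(\tau)},\LL_1)$ is left-resolving and $(E_{g_\pi(\tau)},\LL_2)$ is right-resolving, just as for $\tau$. Thus Lemma \ref{lem_synch} (for the first label) and its right-resolving analogue (for the second label) are available for both permutation graphs. By Lemma \ref{lem_synch}, $(E_\tau,\LL_1)$ is left-synchronizing if and only if $(E_\tau,\LL_1)$ admits no two distinct cycles with the same $\LL_1$-label; by the isomorphism of the previous paragraph, this holds if and only if $(E_{g_\pi(\tau)},\LL_1)$ admits no two distinct cycles with the same $\LL_1$-label, which by Lemma \ref{lem_synch} again is equivalent to $(E_{g_\pi(\tau)},\LL_1)$ being left-synchronizing. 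The identical argument with $\LL_2$ and the right-resolving version of Lemma \ref{lem_synch} handles the second label, completing the proof.

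There is essentially no analytic obstacle here; the only point requiring a moment of care is making sure the hypotheses of Lemma \ref{lem_synch} (left-resolvingness, finiteness) transfer to $(E_{g_\pi(\tau)},\LL_\tau)$, which they do because $g_\pi(\tau)$ is an endpoint-fixing permutation of the finite set $E^k$ and hence Corollary \ref{cor_Etau_resolving} applies verbatim. One could alternatively phrase the whole argument by observing directly that the bijection $\mu\mapsto\mu_\pi$ is a labeled-graph isomorphism and that being left-synchronizing with delay $m$ is preserved under such isomorphisms, bypassing the cycle characterization; I would mention this as a remark but carry out the cycle-based version since Lemma \ref{lem_synch} is already in hand.
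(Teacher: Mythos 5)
Your proof is correct and matches the paper's approach: the paper derives this corollary immediately from Lemma \ref{lem_fpi}, observing that $(E_\tau,\LL_\tau)$ and $(E_{g_\pi(\tau)},\LL_{g_\pi(\tau)})$ are ``constructed to have the same structure,'' i.e.\ are isomorphic as labeled graphs under the vertex relabeling $\pi$. Your detour through the cycle characterization of Lemma \ref{lem_synch} is sound but unnecessary; the direct observation you make at the end (that being synchronizing with delay $m$ is invariant under such a label-preserving isomorphism) is exactly the paper's one-line argument.
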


The following proposition shows that the construction given above
captures shift space equivalence of permutative endomorphisms which are synchronizing in the first label; and that shift space equivalence of such endomorphisms is the same as inner equivalence
through a permutative unitary $U_\pi$ with $\pi\in\Perm(E^{k-1})$.
This relation to inner equivalence has already been considered elsewhere. In particular, \cite[Sec. 4.2]{conti_szymanski_labeled_trees} contains a discussion of the action of such inner automorphisms in the context of Cuntz algebras.

\begin{prop}\label{prop_SSE}
Let $\tau, \tau' \in \Perm(E^k)$ be  endpoint-fixing, and assume that the corresponding permutation graphs are synchronizing in the first label. The following are equivalent:
\begin{enumerate}
\item $\tau$ and $\tau'$ are shift space equivalent, i.e.\ $\X_{(E_\tau, \LL_\tau)} = \X_{(E_{\tau'}, \LL_{\tau'})}$.\label{vlabels_shift}
\item There exists $\pi\in\Perm(E^{k-1})$ such that $\tau' = g_\pi(\tau)$. \label{vlabels_fpi}
\item There exists an endpoint-fixing $\pi \in  \Perm(E^{k-1})$ such that $\Ad(U_\pi) \circ \lambda_\tau = \lambda_{\tau'}$.\label{vlabels_Upi}
\item The shift space automorphisms coded by the textile systems are equal, i.e.\ $\phi_{T_\tau} = \phi_{T_{\tau'}}$. \label{vlabels_phiTtau}
\item The shift space automorphisms induced by $\lambda_\tau$ and $\lambda_\tau'$ are equal, i.e.\ $\phi_{\tau} = \phi_{\tau'}$. \label{vlabels_phitau}
\end{enumerate}  
\end{prop}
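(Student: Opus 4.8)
The plan is to establish the equivalences by a cycle of implications, exploiting the fact that the construction $g_\pi$ was designed precisely so that $(E_{g_\pi(\tau)},\LL_{g_\pi(\tau)})$ is obtained from $(E_\tau,\LL_\tau)$ by relabeling the vertices via $\pi$ while leaving every edge label $[e,f]$ intact (Lemma \ref{lem_fpi}). I would prove \enumref{vlabels_fpi}$\implies$\enumref{vlabels_shift}$\implies$\enumref{vlabels_phiTtau}$\implies$\enumref{vlabels_phitau}, then close the loop with \enumref{vlabels_phitau}$\implies$\enumref{vlabels_fpi}, and finally bring in \enumref{vlabels_Upi} separately by showing \enumref{vlabels_fpi}$\Longleftrightarrow$\enumref{vlabels_Upi} via a direct computation with the permutative unitaries.

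First, \enumref{vlabels_fpi}$\implies$\enumref{vlabels_shift} is immediate from Lemma \ref{lem_fpi}: the map $\mu \mapsto \mu_\pi$ is a label-preserving bijection of edges carrying paths to paths, hence $\LL_{g_\pi(\tau)}(E_{g_\pi(\tau)}^\ast) = \LL_\tau(E_\tau^\ast)$, so the two labeled graphs have the same language and therefore present the same shift space. For \enumref{vlabels_shift}$\implies$\enumref{vlabels_phiTtau}: by Proposition \ref{prop_same_shift} both textile systems live over $\X_E$, and the shift space $\X_{(E_\tau,\LL_\tau)}$ (with the two-component alphabet $E^1\times E^1$) records, block by block, exactly the pairs $(\phi_1\text{-image}, \phi_2\text{-image})$ that the textile system can produce; since $T_\tau$ is one-sided $1$--$1$ by Lemma \ref{lem_Pmu} (here the synchronization-in-the-first-label hypothesis is essential), this pair data is exactly the graph of the coded map $\phi_{T_\tau}=\phi_2\circ\phi_1^{-1}$, so equality of the shift spaces gives $\phi_{T_\tau}=\phi_{T_{\tau'}}$. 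The step \enumref{vlabels_phiTtau}$\implies$\enumref{vlabels_phitau} is then just Corollary \ref{cor_two-sided}, which expresses $\phi_\tau = \sigma^{-k+1}\circ\phi_{T_\tau}$.

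The substantive direction, and the one I expect to be the main obstacle, is closing the loop back to \enumref{vlabels_fpi}, i.e.\ showing that equality of the induced dynamics forces $\tau'$ to be a vertex-relabeling of $\tau$. The idea is: given $\phi_\tau=\phi_{\tau'}$ (equivalently $\phi_{T_\tau}=\phi_{T_{\tau'}}$), I would use Proposition \ref{prop_one-sided} on the one-sided shift. For each right-infinite path $x^+\in\X_E^+$, synchronization in the first label gives a unique path in $E_\tau^\ast$ with first label $x^+$; write $\beta_\tau(x^+)\in E^{k-1}$ for its source and $y^+$ for its second label, so $\phi_\tau^+(x^+)=\beta_\tau(x^+)y^+$, and similarly $\phi_{\tau'}^+(x^+)=\beta_{\tau'}(x^+)y'^+$. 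Equality of $\phi_\tau^+$ and $\phi_{\tau'}^+$ forces $y^+=y'^+$ and $\beta_\tau(x^+)=\beta_{\tau'}(x^+)$ for every $x^+$. Now the key point is that $\beta_\tau(x^+)$ depends only on the length-$k$ prefix $e\alpha$ of $x^+$ (it is $\source_\tau(e\alpha)$), and likewise $y^+$ is determined block-by-block, with its $j$-th symbol depending only on $x^+_{j\ldots j+k-1}$. Reading off the first block: for every $e\alpha\in E^k$ we get $\source_\tau(e\alpha)$ determines, together with the first output symbol, what $\source_{\tau'}(e\alpha)$ must be, and comparing these over all extensions pins down $\tau$ and $\tau'$ up to the bijection $\pi$ of $E^{k-1}$ defined by $\pi := \source_{\tau'}\circ\source_\tau^{-1}$ on sources — one must check this $\pi$ is well-defined, endpoint-fixing, and bijective (using that $\source_\tau,\source_{\tau'}$ restrict to bijections $E^k_{u\to\ast}\to E^{k-1}$-type fibers via Lemma \ref{lem_Etau_properties}), and that with this $\pi$ the defining relation \eqref{eq_f_pi} holds, i.e.\ $\tau'=g_\pi(\tau)$. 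Packaging the block-code bookkeeping cleanly — turning ``$\phi_\tau=\phi_{\tau'}$ as maps'' into ``$\tau$ and $\tau'$ agree after a vertex permutation'' — is where the real care is needed; Lemma \ref{lem_labeltosr} (under synchronization, labels determine sources and ranges of long blocks) is the tool that makes the extraction of $\pi$ rigorous.

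Finally, \enumref{vlabels_fpi}$\Longleftrightarrow$\enumref{vlabels_Upi}: assuming $\tau'=g_\pi(\tau)$, compute $(\Ad(U_\pi)\circ\lambda_\tau)(S_e) = U_\pi U_\tau S_e U_\pi^\ast$ and use $U_\pi^\ast = \sum_{\alpha}S_\alpha S_{\pi(\alpha)}^\ast$ together with Lemma \ref{lem_image_Se} and the explicit edge description of $E_{g_\pi(\tau)}$ from Lemma \ref{lem_fpi} to verify that this equals $\lambda_{\tau'}(S_e) = \sum_i S_{\pi(\beta_i)}S_{f_i}S_{\pi(\alpha_i)}^\ast$; since both sides are endomorphisms agreeing on the generators $S_e$ (and trivially on the $P_v$), they coincide on $C^\ast(E)$. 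For the converse, $\Ad(U_\pi)\circ\lambda_\tau=\lambda_{\tau'}$ applied to $S_e$ and matched against Lemma \ref{lem_image_Se} recovers exactly the edge correspondence of Lemma \ref{lem_fpi}, hence $\tau'=g_\pi(\tau)$ by the uniqueness of a permutation given its permutation graph. One subtlety to flag: in \enumref{vlabels_Upi} the permutation $\pi$ must be endpoint-fixing for $U_\pi$ to be defined, so when extracting $\pi$ in the \enumref{vlabels_phitau}$\implies$\enumref{vlabels_fpi} step one should verify the endpoint-fixing property there (it follows because $\source_\tau,\source_{\tau'}$ are endpoint-compatible by Lemma \ref{lem_Etau_properties}\enumref{lem_Etau_properties_L1}).
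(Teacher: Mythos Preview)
Your treatment of \enumref{vlabels_fpi}$\Rightarrow$\enumref{vlabels_shift}, \enumref{vlabels_shift}$\Leftrightarrow$\enumref{vlabels_phiTtau}, \enumref{vlabels_phiTtau}$\Leftrightarrow$\enumref{vlabels_phitau}, and \enumref{vlabels_fpi}$\Leftrightarrow$\enumref{vlabels_Upi} is essentially the paper's argument. The divergence is in how you close the cycle: you attempt \enumref{vlabels_phitau}$\Rightarrow$\enumref{vlabels_fpi} through the one-sided dynamics and Proposition~\ref{prop_one-sided}, whereas the paper goes directly \enumref{vlabels_shift}$\Rightarrow$\enumref{vlabels_fpi} via Lemma~\ref{lem_labeltosr}.

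Your sketch of \enumref{vlabels_phitau}$\Rightarrow$\enumref{vlabels_fpi} contains genuine errors. The claim that $\beta_\tau(x^+)=\source_\tau(e\alpha)$ for $e\alpha$ the length-$k$ prefix of $x^+$ is false: the first edge of the unique $E_\tau$-path with first label $x^+$ is $x_1\gamma$ for some $\gamma\in E^{k-1}$, and $\gamma$ (the range of that edge) must equal the \emph{source} of the second edge, namely $\tau(x_2\gamma')_{1\ldots k-1}$ for some further $\gamma'$; in general $\gamma\neq x_2\cdots x_k$, so $\beta_\tau(x^+)$ depends on $x^+$ out to the synchronization delay, not merely on its first $k$ symbols. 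Consequently the proposed $\pi:=\source_{\tau'}\circ\source_\tau^{-1}$ is not well-defined (and $\source_\tau\colon E^k\to E^{k-1}$ is not a bijection anyway). Worse, once you correctly read off Proposition~\ref{prop_one-sided} you get $\beta_\tau(x^+)=\beta_{\tau'}(x^+)$, i.e.\ the \emph{same} vertex is the source in both graphs, so there is no nontrivial vertex permutation visible at this level; the bookkeeping you outline cannot produce $\pi$ from this data.

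The paper's route is shorter and avoids these issues. Since you already argue \enumref{vlabels_shift}$\Leftrightarrow$\enumref{vlabels_phiTtau}$\Leftrightarrow$\enumref{vlabels_phitau}, it suffices to prove \enumref{vlabels_shift}$\Rightarrow$\enumref{vlabels_fpi}. Use Lemma~\ref{lem_labeltosr} to pick $n$ so that the $\LL_\tau$-label of any length-$n$ path in $E_\tau$ (or $E_{\tau'}$) determines its source; then each vertex $\alpha\in E^{k-1}$ is identified with the set of length-$n$ labels of paths it emits. Equality of languages (which is \enumref{vlabels_shift}) makes these label-sets match up bijectively between $E_\tau^0$ and $E_{\tau'}^0$, giving $\pi\in\Perm(E^{k-1})$ directly; endpoint-fixing and $\tau'=g_\pi(\tau)$ then follow from Lemma~\ref{lem_Etau_properties}. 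You already name Lemma~\ref{lem_labeltosr} as the key tool, so this fix is within reach; the point is to invoke it at the level of the shift space \enumref{vlabels_shift} rather than after the detour through the induced one-sided maps.
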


\begin{proof}\ 

\enumref{vlabels_fpi} $\Rightarrow$ \enumref{vlabels_shift}: 
This follows from Lemma \ref{lem_fpi}.

\enumref{vlabels_shift} $\Rightarrow$ \enumref{vlabels_fpi}: 
By Lemma \ref{lem_labeltosr}, there exists an $n \in \N$ such that 
for a path of length $n$ in $(E_\tau,\LL_\tau)$ or
$(E_{\tau'},\LL_{\tau'})$, its label uniquely determines the range and source of
the path. Hence, a vertex in $E_\tau^0$ or $E_{\tau'}^0$ can be
identified with the set of labels for length $n$ paths that it
emits.  Since the languages presented by $(E_\tau,\LL_\tau)$ and 
$(E_{\tau'},\LL_{\tau'})$ are equal by assumption, this induces a
bijection $\pi$ from $E^{k-1} = E_\tau^0$ to $E^{k-1} =
E_{\tau'}^0$. By construction, $\pi$ is endpoint-fixing and $\tau' =
g_\pi(\tau)$.

\enumref{vlabels_fpi} $\Leftrightarrow$ \enumref{vlabels_Upi}: 
Assume that $g_\pi(\tau) = \tau'$. Let $e \in E^1$ be given and use
Lemma \ref{lem_image_Se} to obtain $\alpha_i, \beta_i \in E^{k-1}$ and
$f_i \in E^1$ such that $\lambda_\tau(S_e) = \sum_i S_{\beta_i}
S_{f_i} S_{\alpha_i}$. Then,
\[
  \begin{split} 
   (\Ad(U_\pi) \circ \lambda_\tau) (S_e) 
       &= \big( \sum_{\alpha} S_{\pi(\alpha)} S_\alpha^\ast \big)
          \big( \sum_i S_{\beta_i} S_{f_i} S_{\alpha_i}^\ast \big)
          \big( \sum_{\alpha} S_\alpha S_{\pi(\alpha)}^\ast \big) \\
      &=  \sum_i S_{\pi(\beta_i)} S_{f_i} S_{\pi(\alpha_i)}^\ast    \\
      &= \lambda_{\tau'}(S_e). 
  \end{split}
\]
The reverse implication follows by an analogous argument.

\enumref{vlabels_shift} $\Leftrightarrow$ \enumref{vlabels_phiTtau}: 
Let $x \in \X_E$. By definition, $\phi_{T_\tau}(x)$ is the unique $y \in \X_E$ for which $(x_i,y_i)_{i \in \Z} \in \X_{(E_\tau,\LL_\tau)}$. Hence, $\phi_{T_\tau} = \phi_{T_{\tau'}}$ if and only if $\X_{(E_\tau, \LL_\tau)} = \X_{(E_{\tau'}, \LL_{\tau'})}$. Finally, \enumref{vlabels_phiTtau} and \enumref{vlabels_phitau}  are equivalent by Corollary \ref{cor_two-sided}.
\end{proof}

\subsection{Vertex order and shift space equivalence.}
Proposition \ref{prop_SSE}\enumref{vlabels_fpi} shows that the
shift space equivalence relation is a refinement of the inner equivalence relation,
corresponding to inner equivalence through permutative unitaries. 
Thus, every class at level $k$ has the same size, namely the
number of endpoint-fixing permutations at level $(k-1)$. This number
grows combinatorially in $k$, which prompts two questions: How
can one recognize when endpoint-fixing permutations
$\tau,\tau'\in\Perm(E^k)$ belong to the same shift space equivalence
class without testing every endpoint-fixing $\pi\in\Perm(E^{k-1})$?
And is it possible to define a succinct representation of the
equivalence classes that can be constructed directly, 
without considering their many individual elements?


Assume that there exists a strict total ordering -- i.e.\ an
enumeration -- of the vertices in a permutation graph, for which the
position in the order of any given vertex depends only on the labeled
paths emitted and/or received by this vertex.  Such an order is
determined by the language of the permutation graph. Assuming that
$\X_{(E_\tau,\LL_\tau)} = \X_{(E_{\tau'},\LL_{\tau'})}$, this allows
direct construction of $\pi$ from Proposition \ref{prop_SSE} by pairing the vertices that have the
same number in the order. 
In fact, replacing each vertex
in a permutation graph with its number in the order will be shown to
yield a canonical representation for an entire shift space equivalence
class. The resulting structure is called an \emph{ordered permutation graph},
and a precise definition will be given below.

From a computational viewpoint, it is greatly desirable if the ordered
permutation graphs can be constructed directly, such that the equivalence
classes can be studied without first considering the many individual
members, and this motivates the definition of the concrete ordering
introduced below. The aim is to order the vertices of a permutation
graph in a way that facilitates recursive construction of collections
of shift space equivalence classes using an algorithm structurally similar
to the one introduced in Section \ref{sec_Etau_alg}. Since different
equivalence classes have different languages and hence different
orders, this requires that the order can be built incrementally 
alongside the ordered permutation graph. The concrete order
given below makes it easy to construct ordered permutation
graphs by adding each edge \emph{in order}, building at the same
time the ordered permutation graph and its corresponding total order. An algorithm carrying out this  construction will be detailed in Section \ref{sec_Otau_alg}.

\subsubsection{The order of minimal emitted sequences.}

Begin by ordering the vertices of $E$, and introduce a strict total
order on the edges of $E$ for which $e \leq f$ for $e,f \in E^1$
if $\source_E(e) < \source_E(f)$ or $\source_E(e) = \source_E(f)$ and $\range_E(e) \leq \range_E(f)$.
These properties uniquely determine the order if and only if $E$ has
no parallel edges, but in general, the edge order involves an
arbitrary choice. This order on vertices and edges of $E$ is then
used to construct a special family of orderings of the vertices and 
edges of the permutation graphs $E_\tau$ by the following construction.

\begin{definition}[Minimal emitted sequence]
\label{def_order}
Let $\tau\in\Perm(E^k)$ be endpoint-fixing. Given $\alpha \in E_\tau^0$, define $\MESv{\tau}(\alpha) =
e_1\range_E(f_1)e_2\range_E(f_2)\cdots$ to be the lexicographically minimal sequence for which there exists
an infinite path  
\[
 A \colon \alpha \llarc{e_1}{f_1} \alpha_1 \llarc{e_2}{f_2} \alpha_2\llarc{e_3}{f_3} \cdots
\]
over $(E_\tau,\LL_\tau)$.
Such a path is called a \emph{minimal emitted path}, and the sequence $\MESv{\tau}(\alpha)$ is called the \emph{minimal emitted sequence} of $\alpha$.
Lexicographic order of the minimal emitted sequences defines a total preorder on $E_\tau^0$,
the \emph{preorder of minimal emitted sequences}, given by $\alpha \leq \beta$ if and only if $\MESv{\tau}(\alpha) \leq \MESv{\tau}(\beta)$.
\end{definition}

The preorder of minimal emitted sequences is lifted to permutation graph edges in the obvious way:
\begin{definition}\label{def_edge_order}
Given a labeled edge $\mu\colon \beta\llarc{e}{f}\alpha$ of $(E_\tau,\LL_\tau)$, let $\MESe{\tau}(\mu) = e \range_E(f) \MESv{\tau}(\alpha)$. 
Define a total preorder on the edges $E_\tau^1$ by $\mu \leq \nu$ if and only if $\MESe{\tau}(\mu) \leq \MESe{\tau}(\nu)$ lexicographically. 
\end{definition}
\noindent


\begin{lem}\label{lem_enumeration}
If $(E_\tau,\LL_\tau)$ is synchronizing in the first label, then the preorders given in definitions \ref{def_order} and \ref{def_edge_order} are strict and total orders (i.e.\ enumerations of the vertices and edges, respectively.)
\end{lem}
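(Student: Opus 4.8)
The plan is to show that the preorder of minimal emitted sequences is antisymmetric, since it is automatically a total preorder by construction; the edge order then follows almost for free. Concretely, I would argue that if $(E_\tau,\LL_\tau)$ is synchronizing in the first label, then distinct vertices $\alpha \neq \beta$ in $E_\tau^0$ have distinct minimal emitted sequences, so $\MESv{\tau}(\alpha) = \MESv{\tau}(\beta)$ forces $\alpha = \beta$. The key observation is that by Lemma \ref{lem_synch} and Lemma \ref{lem_labeltosr}, synchronization in the first label means there is some delay $n$ such that the first-label word $\LL_1(A)$ of any length-$n$ path $A$ uniquely determines $\source_\tau(A)$. So I would first extract from $\MESv{\tau}(\alpha)$ the first-label prefix $e_1 e_2 \cdots e_n$ of a minimal emitted path, and note that this word pins down $\alpha$.

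The first step is to record that the minimal emitted sequence is well-defined: by Lemma \ref{lem_Etau_properties}\enumref{lem_Etau_properties_receive} (or rather the "emit" direction — each vertex emits at least one edge since $E$ has no sinks and $(E_\tau,\LL_1)$ presents $\X_E$ by Proposition \ref{prop_same_shift}), every vertex of $E_\tau$ emits at least one infinite path, so the set of emitted sequences is nonempty and has a lexicographic minimum (the alphabet $E^1 \times E^0$ is finite, so lexicographic minima of sets of infinite sequences exist — one builds the minimal sequence greedily, and finiteness of out-degree plus a König's-lemma argument guarantees an actual infinite path realizes it). Second, I would take the delay $n$ from Lemma \ref{lem_labeltosr}, and suppose $\MESv{\tau}(\alpha) = \MESv{\tau}(\beta)$. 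Reading off the first $n$ letters of the first-label component gives a common word $e_1 \cdots e_n \in E^n$ that labels a length-$n$ path emitted from both $\alpha$ and from $\beta$; by the defining property of $n$, that word determines the source, hence $\alpha = \beta$. This establishes antisymmetry, so the vertex preorder is a strict total order.

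For the edge order, suppose $\MESe{\tau}(\mu) = \MESe{\tau}(\nu)$ for edges $\mu\colon\beta\llarc{e}{f}\alpha$ and $\nu\colon\beta'\llarc{e'}{f'}\alpha'$. Equality of the first two letters gives $e = e'$ and $\range_E(f) = \range_E(f')$, and equality of the tails gives $\MESv{\tau}(\alpha) = \MESv{\tau}(\alpha')$, whence $\alpha = \alpha'$ by the vertex case. Now $e$ and $\alpha$ together determine $\mu$ uniquely by Lemma \ref{lem_Etau_properties}\enumref{lem_Etau_properties_receive}, so $\mu = \nu$; in particular $\beta = \beta'$ and $f = f'$. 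Hence the edge preorder is also a strict total order.

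The main obstacle I anticipate is the well-definedness step — making precise that a set of one-sided infinite sequences over a finite alphabet, arising as emitted label-sequences of infinite paths from a fixed vertex in a finite graph with bounded out-degree, actually attains its lexicographic infimum by a genuine infinite path. This is a standard compactness/König's lemma argument (greedily choosing the minimal next edge at each step, using that only finitely many edges leave each vertex and that $E$ has no sinks so the process never terminates), but it is the one place where a little care is needed rather than a one-line appeal to an earlier result. Everything after that is bookkeeping with the synchronization delay from Lemma \ref{lem_labeltosr} and the uniqueness clauses of Lemma \ref{lem_Etau_properties}.
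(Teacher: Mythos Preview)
Your proposal is correct and follows essentially the same route as the paper: use the synchronization delay so that a finite prefix of $\MESv{\tau}(\alpha)$ (specifically, the first-label letters $e_1\cdots e_n$ read off from the first $2n$ symbols) determines the source vertex, giving antisymmetry; the edge case then follows by recovering $e$ and $\alpha$ and invoking Lemma~\ref{lem_Etau_properties}\enumref{lem_Etau_properties_receive}. The paper's proof is a three-sentence sketch of the same idea; your version is more detailed, and in particular your explicit treatment of well-definedness of the lexicographic minimum (which the paper takes for granted in Definition~\ref{def_order}) is a worthwhile addition rather than a divergence in strategy.
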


\begin{proof}
Let $(E_\tau,\LL_\tau)$ be synchronizing in the first label. Then there exists $m \in \N$ such that each $\alpha \in E_\tau^0$ is uniquely determined by the first $2m$ elements of $\MESv{\tau}(\alpha)$. An analogous argument proves the other statement.
\end{proof}

Notice that, rather than defining one global order on $E^{k-1}$ and $E^k$, each order of minimal emitted sequences orders the permutation 
graph vertices and edges according to a \emph{particular} permutation $\tau$, yielding different orders for different $\tau$. 
\begin{lem}
  \label{lem_ordv}
  Let $\tau,\tau'\in Perm(E^k)$ be endpoint-fixing permutations that synchronize in the first label. If
  $\tau' = g_\pi(\tau)$, then 
  $\MESv{\tau'} = \MESv{\tau}\circ \pi^{-1}$.
\end{lem}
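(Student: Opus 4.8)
The plan is to trace through the definitions and show that the minimal emitted path structure is carried along by $g_\pi$ exactly as the vertices are relabeled by $\pi$. The starting point is Lemma~\ref{lem_fpi}, which tells us that the edges of $(E_{g_\pi(\tau)},\LL_{g_\pi(\tau)})$ are precisely $\mu_\pi\colon\pi(\beta)\llarc{e}{f}\pi(\alpha)$ for each edge $\mu\colon\beta\llarc{e}{f}\alpha$ of $(E_\tau,\LL_\tau)$; in particular the two labeled graphs are isomorphic via $\pi$ on vertices and the identity on the labels $[e,f]$. I would first observe that this isomorphism sets up a label-preserving bijection between infinite paths out of $\alpha$ in $(E_\tau,\LL_\tau)$ and infinite paths out of $\pi(\alpha)$ in $(E_{g_\pi(\tau)},\LL_{g_\pi(\tau)})$: an infinite path $A\colon\alpha\llarc{e_1}{f_1}\alpha_1\llarc{e_2}{f_2}\cdots$ over $(E_\tau,\LL_\tau)$ corresponds to the path $A_\pi\colon\pi(\alpha)\llarc{e_1}{f_1}\pi(\alpha_1)\llarc{e_2}{f_2}\cdots$ over $(E_{g_\pi(\tau)},\LL_{g_\pi(\tau)})$, and these have the same associated sequence $e_1\range_E(f_1)e_2\range_E(f_2)\cdots$.

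Next I would conclude from this bijection that the \emph{sets} of emitted sequences coincide: $\{\text{emitted sequences of }\alpha\text{ in }(E_\tau,\LL_\tau)\} = \{\text{emitted sequences of }\pi(\alpha)\text{ in }(E_{g_\pi(\tau)},\LL_{g_\pi(\tau)})\}$. Since $\MESv{\tau}(\alpha)$ and $\MESv{g_\pi(\tau)}(\pi(\alpha))$ are, by Definition~\ref{def_order}, the lexicographically minimal elements of these two equal sets, they must be equal. Writing $\beta = \pi(\alpha)$, i.e.\ $\alpha = \pi^{-1}(\beta)$, this says exactly $\MESv{g_\pi(\tau)}(\beta) = \MESv{\tau}(\pi^{-1}(\beta))$ for all $\beta\in E^{k-1}$, which is the claimed identity $\MESv{\tau'} = \MESv{\tau}\circ\pi^{-1}$ with $\tau' = g_\pi(\tau)$.

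There is one point that needs a small amount of care: I should make sure that an infinite emitted path actually exists, so that the minimal emitted sequence is well-defined — this is guaranteed because $E$ has no sinks, hence by Lemma~\ref{lem_Etau_properties}\enumref{lem_Etau_properties_receive} every vertex of $E_\tau$ emits at least one edge, so infinite paths exist from every vertex. The synchronization hypothesis on $\tau$ (equivalently, by Corollary~\ref{lem_orderandsynch}, on $\tau'$) is what makes the preorder of minimal emitted sequences an honest total order by Lemma~\ref{lem_enumeration}, so the statement is about comparing two genuine enumerations; however, the equality of the underlying sequence functions $\MESv{\tau'}$ and $\MESv{\tau}\circ\pi^{-1}$ as maps into sequence space does not itself require synchronization — it follows purely from the path bijection above. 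The main (and only real) obstacle is the bookkeeping of matching up the labels and the vertices correctly under $\pi$; once the path correspondence in the first paragraph is set up cleanly, the rest is immediate.
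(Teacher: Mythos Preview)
Your proof is correct and follows essentially the same approach as the paper: use Lemma~\ref{lem_fpi} to obtain a label-preserving edge bijection, lift it to a label-preserving bijection of (infinite) paths, and conclude that the sets of emitted sequences from $\alpha$ and $\pi(\alpha)$ coincide, hence so do their lexicographic minima. Your added remarks on the existence of infinite paths and on the synchronization hypothesis being inessential for the raw equality of $\MESv{\tau'}$ and $\MESv{\tau}\circ\pi^{-1}$ are accurate refinements, but the core argument is the same as the paper's.
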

\begin{proof}
  By Lemma \ref{lem_fpi}, there is exactly one labeled edge
  $\pi(\alpha)\llarc{e}{f}\pi(\beta)$ of $(E_{\tau'},\LL_{\tau'})$ for
  each labeled edge $\alpha\llarc{e}{f}\beta$ of $(E_\tau,\LL_\tau)$.
  This lifts to a label preserving bijection between the paths over
  $(E_{\tau},\LL_{\tau})$ and the paths over
  $(E_{\tau'},\LL_{\tau'})$. Since the minimal emitted sequences
  depend only on the path labels, this implies that
  $\MESv{\tau}(\alpha) = \MESv{\tau'}(\pi(\alpha))$ for every
  $\alpha\in E_{\tau}^0$.
\end{proof}

Let $(E_\tau,\LL_\tau)$ be a permutation graph of level $k$ that is synchronizing in the first label.
For $u,v \in E^0$ let $O_{u \to v} = \setof{o^i_{u \to v} }{0 \leq i <
\left | E^{k-1}_{u \to v } \right|}$. Define
\begin{equation}\label{eq_def_O}
O^0 = \bigcup_{u,v\in E^0} O_{u \to v}
\qquad \textnormal{ and } \quad 
O^1 = \{ \mu_0 , \ldots, \mu_{\vert E^{k} \vert -1} \}.
\end{equation}
Since $(E_\tau,\LL_\tau)$ is synchronizing in the first label, the orders of the vertices and edges of $E_\tau$ induced by the minimal emitted sequences are strict and total by Lemma \ref{lem_enumeration}. Hence, it is possible to define bijections $\ordv{\tau} \colon E_\tau^0 \to O^0$ and $\orde{\tau} \colon O^1 \to E_\tau^1$ that number the permutation graph vertices and edges according to the ordering. Note that the edges are numbered globally while the vertices are ordered within each class such that the $i\supth$ element of $E^{k-1}_{u \to v}$ is mapped to $o^i_{u \to v} \in O_{u \to v} \subseteq O^0$.

\begin{definition}[Ordered permutation graph]
  \label{def_Otau}
  Given an endpoint-fixing permutation $\tau\in\Perm(E^k)$ that synchronizes in
  the first label, define its \emph{ordered permutation graph} as the
  labeled graph $(O_\tau,\LL_{O_\tau})$, where
\begin{displaymath}
\begin{array}{l c l}
O_\tau^0 = O^0 
&\quad &
\source_{O_\tau} = \ordv{\tau} \circ \source_\tau \circ \orde{\tau} \\ 
O_\tau^1 = O^1   
&\quad & \range_{O_\tau} = \ordv{\tau} \circ \range_\tau \circ \orde{\tau}
\end{array},
\end{displaymath}
and $\LL_{O_\tau} = \LL_\tau \circ \orde{\tau}$. 
\end{definition}
\noindent In other words, $\ordv{\tau}(\alpha) \llarc{e}{f} \ordv{\tau}(\beta)$ is a labeled edge in $(O_\tau,\LL_{O_\tau})$ precisely when
$\alpha \llarc{e}{f} \beta$ is a labeled edge in $(E_\tau,\LL_\tau)$. In particular, the ordered permutation graph $(O_\tau,\LL_{O_\tau})$ presents the same shift space as $(E_\tau, \LL_\tau)$. In fact, all mutually shift space equivalent permutations at the same level yield the same ordered permutation graph,
as shown in the following proposition:

\begin{prop}\label{prop_vlabels}
Let $\tau, \tau' \in \Perm(E^k)$ be  endpoint-fixing. 
If $(E_\tau,\LL_\tau)$ and $(E_{\tau'},\LL_{\tau'})$ are synchronizing in the first label, then the following is equivalent to the statements given in Proposition \ref{prop_SSE}:
\begin{enumerate}
\setcounter{enumi}{5}
\item $(O_\tau, \LL_{O_\tau}) = (O_{\tau'}, \LL_{O_{\tau'}})$.\label{vlabels_Otau}
\end{enumerate}
\end{prop}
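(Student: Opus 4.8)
The plan is to show that statement \enumref{vlabels_Otau} is equivalent to statement \enumref{vlabels_fpi} of Proposition \ref{prop_SSE}, namely the existence of an endpoint-fixing $\pi\in\Perm(E^{k-1})$ with $\tau'=g_\pi(\tau)$; since Proposition \ref{prop_SSE} already establishes the mutual equivalence of \enumref{vlabels_shift}--\enumref{vlabels_phitau}, this suffices. Throughout, Lemma \ref{lem_enumeration} guarantees that the preorders of Definitions \ref{def_order} and \ref{def_edge_order} are genuine enumerations, so that the numbering maps $\ordv{\tau},\orde{\tau}$ (and $\ordv{\tau'},\orde{\tau'}$) of Definition \ref{def_Otau} are well defined.

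First suppose $\tau'=g_\pi(\tau)$. By Lemma \ref{lem_fpi}, $\mu\mapsto\mu_\pi$ is a label-preserving bijection from the edges of $(E_\tau,\LL_\tau)$ onto the edges of $(E_{\tau'},\LL_{\tau'})$ that moves the source $\beta$ and range $\alpha$ of an edge to $\pi(\beta)$ and $\pi(\alpha)$. Lemma \ref{lem_ordv} gives $\MESv{\tau'}=\MESv{\tau}\circ\pi^{-1}$, so $\alpha$ and $\pi(\alpha)$ occupy the same position in the respective orders of minimal emitted sequences; since $\pi$ is endpoint-fixing it preserves each block $E^{k-1}_{u\to v}$, and therefore $\ordv{\tau'}=\ordv{\tau}\circ\pi^{-1}$. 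Substituting $\MESv{\tau'}(\pi(\alpha))=\MESv{\tau}(\alpha)$ into Definition \ref{def_edge_order} yields $\MESe{\tau'}(\mu_\pi)=\MESe{\tau}(\mu)$, so the edge bijection is order-preserving and $\orde{\tau'}(j)=(\orde{\tau}(j))_\pi$ for every index $j$. Writing $\orde{\tau}(j)\colon\beta\llarc{e}{f}\alpha$, one then computes directly that $\source_{O_{\tau'}}(j)=\ordv{\tau'}(\pi(\beta))=\ordv{\tau}(\beta)=\source_{O_\tau}(j)$, and identically for ranges and for labels; hence $(O_\tau,\LL_{O_\tau})=(O_{\tau'},\LL_{O_{\tau'}})$.

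Conversely, suppose $(O_\tau,\LL_{O_\tau})=(O_{\tau'},\LL_{O_{\tau'}})$ and set $\pi=(\ordv{\tau'})^{-1}\circ\ordv{\tau}$. Since both $\ordv{\tau}$ and $\ordv{\tau'}$ send the $i\supth$ element of $E^{k-1}_{u\to v}$ to $o^i_{u\to v}$, the bijection $\pi$ preserves each block $E^{k-1}_{u\to v}$ and is thus endpoint-fixing. For an edge $\mu\colon\beta\llarc{e}{f}\alpha$ of $(E_\tau,\LL_\tau)$ with index $j=\orde{\tau}^{-1}(\mu)$, the edge numbered $j$ in $O_\tau=O_{\tau'}$ runs from $\ordv{\tau}(\beta)$ to $\ordv{\tau}(\alpha)$ with label $[e,f]$; reading this off inside $O_{\tau'}$ and applying $(\ordv{\tau'})^{-1}$ shows that the edge $\orde{\tau'}(j)$ of $(E_{\tau'},\LL_{\tau'})$ runs from $\pi(\beta)$ to $\pi(\alpha)$ with label $[e,f]$. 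As $|E_\tau^1|=|E_{\tau'}^1|=|E^k|$, this exhausts the edges of $(E_{\tau'},\LL_{\tau'})$; comparing with the description of the edges of $(E_{g_\pi(\tau)},\LL_{g_\pi(\tau)})$ in Lemma \ref{lem_fpi}, and using that a permutation is determined by its permutation graph, we conclude $\tau'=g_\pi(\tau)$.

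I expect the only real difficulty to be bookkeeping rather than conceptual: one must keep straight the three layers of bijections --- $\pi$ on $E^{k-1}$, the edge bijection $\mu\mapsto\mu_\pi$ of Lemma \ref{lem_fpi}, and the numbering maps $\ordv{\tau},\orde{\tau}$ --- and carefully verify that $\pi$ actually lands among the endpoint-fixing permutations. The latter rests on the fact, built into Definition \ref{def_Otau}, that the order of minimal emitted sequences refines the partition of $E^{k-1}$ by endpoints (each $O_{u\to v}$ is numbered separately), together with Lemma \ref{lem_enumeration}. Everything else is a mechanical unwinding of Definitions \ref{def_order}, \ref{def_edge_order}, and \ref{def_Otau} combined with Lemmas \ref{lem_fpi} and \ref{lem_ordv}.
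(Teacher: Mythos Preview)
Your argument is correct and, for the forward direction \enumref{vlabels_fpi}$\Rightarrow$\enumref{vlabels_Otau}, essentially matches the paper's proof (indeed you are a bit more careful: you explicitly verify via $\MESe{\tau'}(\mu_\pi)=\MESe{\tau}(\mu)$ that the edge numberings $\orde{\tau}$ and $\orde{\tau'}$ are compatible, whereas the paper leaves this implicit in its appeal to Lemma~\ref{lem_ordv} and Definition~\ref{def_Otau}).

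For the converse you take a genuinely different route. The paper closes the loop by proving \enumref{vlabels_Otau}$\Rightarrow$\enumref{vlabels_shift}: since $\orde{\tau}$ is a label-preserving bijection between paths over $(E_\tau,\LL_\tau)$ and paths over $(O_\tau,\LL_{O_\tau})$, equal ordered permutation graphs immediately give equal languages and hence equal shift spaces. You instead prove \enumref{vlabels_Otau}$\Rightarrow$\enumref{vlabels_fpi} directly, defining $\pi=(\ordv{\tau'})^{-1}\circ\ordv{\tau}$ and reading off from the shared ordered graph that the edges of $(E_{\tau'},\LL_{\tau'})$ are exactly those of $(E_{g_\pi(\tau)},\LL_{g_\pi(\tau)})$. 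Your approach is more constructive---it exhibits the permutation $\pi$ explicitly---and makes transparent that $\pi$ is endpoint-fixing because the numbering maps respect the blocks $O_{u\to v}$. The paper's approach is shorter and avoids the bookkeeping entirely, at the cost of passing through the shift-space characterization rather than landing on $\pi$ directly.
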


\begin{proof}
Proposition \ref{prop_SSE}\enumref{vlabels_fpi} $\Rightarrow$ \enumref{vlabels_Otau}: 
Assume that $\tau' = g_\pi(\tau)$ for some $\pi\in Perm(E^{k-1})$. 
Then by Lemma \ref{lem_fpi}, there is precisely one edge $\pi(\alpha)\llarc{e}{f}\pi(\beta)$ 
in $(E_{\tau'},\LL_{\tau'})$ for each edge $\alpha\llarc{e}{f}\beta$ of $(E_{\tau},\LL_{\tau})$.
By Definition \ref{def_Otau}, the edges of $(O_\tau,\LL_{O_\tau})$ and $(O_{\tau'},\LL_{O_{\tau'}})$ are then respectively 
$\ordv{\tau}(\alpha) \llarc{e}{f} \ordv{\tau}(\beta)$ and
$\ordv{\tau'}(\pi(\alpha)) \llarc{e}{f} \ordv{\tau'}(\pi(\beta))$ for each of the edges
$\alpha\llarc{e}{f}\beta$ in $(E_\tau,\LL_\tau)$. But $\ordv{\tau'}\circ\pi = \ordv{\tau}$ by Lemma \ref{lem_ordv} and Definition \ref{def_Otau}, 
whereby 
$(O_\tau,\LL_{O_\tau}) = (O_{\tau'},\LL_{O_{\tau'}})$.

\enumref{vlabels_Otau} $\Rightarrow$ Proposition \enumref{prop_SSE}(\ref{vlabels_shift}): 
Whenever $\tau$ is synchronizing in the first label, $\orde{\tau}$ lifts to a label-preserving bijection between the
paths over $(E_\tau,\LL_\tau)$ and the paths over $(O_\tau,\LL_{O_\tau})$, whence the language of the shift space presented
by a permutation graph is the same as the language of the corresponding ordered permutation graph. 
The result then follows trivially.
\end{proof}


%
%

\begin{remark}
\label{rem_computations}
A construction analogous to the one presented in Section \ref{sec_composition} 
allows the ordered permutation graph of $\lambda_\tau \circ  \lambda_\eta$ to be
 computed directly from $(O_\tau,\LL_{O_\tau})$ and $(O_\eta,\LL_{O_\eta})$. 
This gives a straightforward way to find
$\lambda_\tau \circ \lambda_\eta$ up to adjunction by a permutative
unitary when $\lambda_\tau$ and $\lambda_\eta$ are know up to
adjunction by permutative unitaries. In this setting, the construction
is simply a composition of the corresponding textile systems
\cite[p.~18]{nasu_memoir}, and this can be calculated as a matrix 
multiplication in an appropriate semiring. Tools from the theory of textile
systems can hence be applied when studying products of permutative automorphisms, and efficient
algorithms for this purpose will be examined in a forthcoming paper. In this paper, 
tools for automatic detection of automorphism order are presented as well.
\end{remark}

\section{Finding outer permutative automorphisms}
\label{sec_Otau_alg}

Because the number of equivalent automorphisms in a shift space equivalence
class at level $k$ is the same as the number of all endpoint-fixing permutations
at level $(k-1)$, it is infeasible in practice to first
construct all automorphisms before picking a canonical one from each class. 
The great benefit of the order introduced in the previous section is
that it not only facilitates direct construction of a particular equivalence class,
but also direct construction of collections of them. 

The following details the recursive construction of all shift space
equivalence classes for the permutative automorphism at level $k$. The
structure is similar to that of Equation \eqref{eq:compute-Etau},
except that a third recursion layer is added to keep track of the
numbering within each vertex class $O_{u \to v}$. 
In addition, edges are now placed in increasing order of 
minimal emitted sequences, incrementally defining the order at
the same time as building the graph. To achieve this, each
added edge $\mu$ to a partially completed ordered permutation graph $\GG$
is required to satisfy an extra ordering condition compared to
Definition \ref{def_isvalid}:

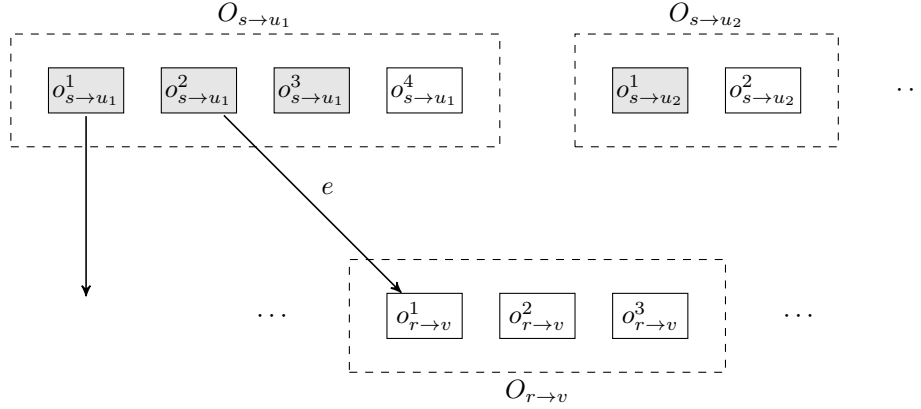
\begin{figure}
  \centering
  \begin{tikzpicture}
  [bend angle=10,
   clearRound/.style = {circle, inner sep = 0pt, minimum size = 17mm},
   clear/.style = {rectangle, minimum width = 10 mm, minimum height = 5 mm, inner sep = 0pt},  
   greyRound/.style = {circle, draw, minimum size = 1 mm, inner sep =
      0pt, fill=black!10},
   grey/.style = {rectangle, draw, minimum width = 10 mm, minimum height=6mm, inner sep =
      1pt, fill=black!10},
    white/.style = {rectangle, draw, minimum width = 10 mm,minimum height=6mm, inner sep =
      1pt},
   to/.style = {->, shorten <= 1 pt, >=stealth', semithick}]

  \node[grey] (s11) at (-6,0) {$o^1_{s \to u_1}$};
  \node[grey] (s12) at (-4.5,0) {$o^2_{s \to u_1}$};
  \node[grey] (s13) at (-3,0) {$o^3_{s \to u_1}$};
  \node[white] (s14) at (-1.5,0) {$o^4_{s \to u_1}$};  
  \node[grey] (s21) at (1.5,0) {$o^1_{s \to u_2}$};
  \node[white] (s22) at (3,0) {$o^2_{s \to u_2}$};

\node (text1) at (-3.75,1) {$O_{s \to u_1}$};
\node (text2) at (2.25,1) {$O_{s \to u_2}$};
\node (text2) at (0,-4) {$O_{r \to v}$};

\node  at (5,0) {$\cdots$};
\node at (3.5,-3) {$\cdots$};
\node at (-3.5,-3) {$\cdots$};

  \node[clear] (r0) at (-6,-3) {};
  \node[white] (r1) at (-1.5,-3) {$o^1_{r \to v}$};
  \node[white] (r2) at (0,-3) {$o^2_{r \to v}$};
  \node[white] (r3) at (1.5,-3) {$o^3_{r \to v}$};

\draw[dashed] (-0.5, 0.75) -- (-7,0.75) -- (-7,-0.75) -- (-0.5,-0.75) -- cycle;
\draw[dashed] (0.5, 0.75) -- (4,0.75) -- (4,-0.75) -- (0.5,-0.75) -- cycle;
\draw[dashed] (-2.5, -2.25) -- (-2.5,-3.75) -- (2.5,-3.75) -- (2.5,-2.25) -- cycle;
 
\draw[to] (s11) to (r0);
\draw[to] (s12) to node[auto] {$e$} (r1);

\end{tikzpicture}
  \caption{ Restriction on the placement of edges caused by Definition \ref{def_isvalid_iii} \enumref{def_isvalid_outdeg}. Previously placed
    edges are as shown. 
    The edge to be placed has first label $e\colon s\to r$ and range
    $o^2_{r\to v}$. The legal sources
    according to Definition
    \ref{def_isvalid_iii}\enumref{def_isvalid_outdeg} are marked with
    gray.
 }
  \label{fig_alg}
\end{figure}

\begin{definition}
  \label{def_isvalid_iii}
  Let $\GG$ be a labeled graph with vertex set $O^0$ and edge set contained in $O^1$.
  Let $e\colon s\to r$ and $f\colon q\to t$ be edges in $E^1$ and
  $\mu\colon o^i_{s\to q} \llarc{e}{f} o^j_{r\to t}$.
  The predicate $\isvalidb{\GG,\mu}$ is true if
  \begin{enumerate}
  \item In $\GG$, $i \le \min\setof{j}{ \mathsf{outdeg}(o^j_{s\to q}) = 0}$.\label{def_isvalid_outdeg} 
  \item $\addedge{\GG}{\mu}$ synchronizes in both labels.\label{def_isvalid_synch}
  \item $\addedge{\GG}{\mu}$ obeys the properties of Proposition \ref{prop_Etau_subgraph}
    when replacing $E^{k-1}_{u \to v}$ everywhere by $O_{u \to v}$.\label{def_isvalid_perm}
  \end{enumerate}  
\end{definition}
\noindent Condition (\ref{def_isvalid_outdeg}) is sketched in Figure \ref{fig_alg}.

In Equation \eqref{eq:compute-Otau}, which is analogous to Equation \eqref{eq:compute-Etau},
$E^0$ is enumerated as $\{v_0,\ldots,v_{|E^0|-1}\}$,
$E^1$ as $\{e_0,\ldots,e_{|E^1|-1}\}$, and $O_{u \to v}$ is defined as in Equation \eqref{eq_def_O}. For each $0 \leq m \leq \vert E^1 \vert-1$, let $r_m$ and $s_m$ denote respectively the range and the source of $e_m$.
Define three mutually recursive functions $\Psi_1$, $\Psi_r$, and $\Psi_s$ by
\begin{equation}
  \label{eq:compute-Otau}
  \begin{split}
  \Psi_1(\vert E^1 \vert,\GG) &= \{\GG\} \\
  \Psi_1(m,\GG) &= \Psi_r(m,0,\GG) \\
  \\
  \Psi_r(m,\vert E^0\vert,\GG) &= \Psi_1(m+1,\GG) \\
  \Psi_r(m,n,\GG) &= \Psi_s\left(m,n,0,\GG\right)\\
  \\
  \Psi_s(m,n,|E^{k-1}_{r_m\to v_n}|,\GG)  &= \Psi_r(m,n+1,\GG)\\
  \Psi_s(m,n,i,\GG)  &= \!\!\! \!\!\! \disjointunion_{
      \begin{smallmatrix}
      {\mu_j\colon \beta\,\llarc{e_m}{f}\,o_{r_m\to v_n}^i}\\
      {\isvalidb{\GG,\mu_j}}
    \end{smallmatrix}
  }
  \!\!\! \!\!\! \Psi_s(m,n,i+1,\GG\oplus\mu_j), \quad j = \left \vert G^1 \right \vert
  \end{split}
\end{equation}
The algorithm proceeds as follows:
For each $e_m\colon s_m\to r_m$ in order,
together with each range $o^i_{r_m\to v_n} \in O_{r_m\to v_n}$ in
order, all possible ways of placing edges of the form $\mu_j\colon
\beta\llarc{e_m}{f} o^i_{r_m\to v_n}$ that satisfy Definition
\ref{def_isvalid_iii} are tried, taking the disjoint union of 
the results from the recursion level below. It will be shown in the following that 
when $\GG_0$ is the empty labeled graph with vertex set $O^0$,
$\completeOtau{0,\GG_0}$ is exactly the set of ordered permutation
graphs of every shift space equivalence class for permutative 
automorphisms at level $k$, and that each equivalence class is 
constructed exactly once.

\begin{lem}\label{lem_construction_ranges}
Let $(G,\LL) \in\completeOtau{0,\GG_0}$ be a labeled graph constructed by the algorithm presented above
Let $\mu_i, \mu_{i'} \in G^1$ with labels $[e,f]$ and $[e',f']$, respectively. Then $i < i'$ if and only if one of the following three conditions holds:
\begin{enumerate}
\item $e < e'$.
\item $e = e'$ and $\range_E(f) < \range_E(f')$.
\item $e = e'$, $\range_E(f) = \range_E(f')$ and $j < j'$ when $\range_G(\mu_i) = o^j_{\range_E(e) \to \range_E(f)}$ and $\range_G(\mu_{i'}) = o^{j'}_{\range_E(e) \to \range_E(f)}$. 
\end{enumerate}
\end{lem}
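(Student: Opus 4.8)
The plan is to trace through the recursion in Equation \eqref{eq:compute-Otau} and read off the order in which edges are placed, then match that order against the three conditions in the statement. The key observation is that the index $i$ of an edge $\mu_i \in G^1$ records exactly when $\mu_i$ was added to the graph during the run of $\completeOtau{0,\GG_0}$, since each call to the last line of \eqref{eq:compute-Otau} sets $j = |G^1|$ to be the current number of edges placed so far. So I need only argue that the algorithm places edges in precisely the order dictated by conditions (1)--(3), lexicographically in $(e,\range_E(f),j)$.

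First I would peel off the outer recursion layer $\Psi_1$: it iterates $m$ from $0$ to $|E^1|-1$, so all edges with first label $e_{m}$ are placed before any edge with first label $e_{m'}$ for $m < m'$. Since $E^1$ is enumerated as $\{e_0,\dots,e_{|E^1|-1}\}$ with $e_m \leq e_{m'}$ iff $m \leq m'$ under the edge order on $E$, this gives condition (1): $e < e'$ implies $i < i'$, and if the first labels are equal we move to the next layer. Next I would examine $\Psi_r$, which for fixed $m$ iterates $n$ from $0$ to $|E^0|-1$, placing (via $\Psi_s$) all edges with first label $e_m$ and range in $O_{r_m \to v_n}$ before those with range in $O_{r_m \to v_{n'}}$ for $n < n'$. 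An edge $\mu\colon \beta \llarc{e_m}{f} o^i_{r_m \to v_n}$ has $\range_G(\mu) = o^i_{r_m\to v_n}$, and by Lemma~\ref{lem_Etau_properties}\enumref{lem_Etau_properties_L2} (transported to the ordered setting via Definition~\ref{def_isvalid_iii}\enumref{def_isvalid_perm}) we have $\range_E(f) = v_n$; so ordering by $n$ is exactly ordering by $\range_E(f)$, giving condition (2). Finally, within fixed $m$ and $n$, the innermost layer $\Psi_s$ iterates $i$ from $0$ to $|E^{k-1}_{r_m\to v_n}|-1$, placing all edges with range $o^i_{r_m\to v_n}$ before those with range $o^{i'}_{r_m\to v_n}$ for $i < i'$; writing $\range_G(\mu_i) = o^j_{\range_E(e)\to\range_E(f)}$ this $j$ is precisely the inner loop index, yielding condition (3).

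One subtlety to address carefully: within a single call $\Psi_s(m,n,i,\GG)$, several edges $\mu_j$ with the same first label $e_m$ and the same range $o^i_{r_m\to v_n}$ may be placed (the disjoint union ranges over all valid $\mu_j$), but these are placed into \emph{different} branches of the search tree, not simultaneously into the same graph -- so in any single output graph $(G,\LL)$, for fixed $(e,\range_E(f),j)$ there is at most one edge, by Proposition~\ref{prop_Etau_subgraph}\enumref{prop_Etau_subgraph_ii}--\enumref{prop_Etau_subgraph_iii} (again transported via Definition~\ref{def_isvalid_iii}\enumref{def_isvalid_perm}). Hence the triple $(e,\range_E(f),j)$ is a well-defined injective key on $G^1$, and since the algorithm places edges in increasing lexicographic order of this key while assigning consecutive indices $0,1,2,\dots$, the index $i$ of $\mu_i$ is an order-isomorphism onto this lexicographic order. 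Reading the equivalence ``$i < i'$ iff the key of $\mu_i$ precedes the key of $\mu_{i'}$'' back out gives exactly conditions (1)--(3).

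The main obstacle I anticipate is purely bookkeeping: one must be scrupulous that the loop variables $m$, $n$, $i$ in \eqref{eq:compute-Otau} correspond to the quantities $e$, $\range_E(f)$, $j$ named in the statement, and in particular that $r_m$ (the range of $e_m$) equals the source-class index of the destination vertex so that the $\Psi_s$ loop really does range over the correct set $O_{r_m\to v_n}$. There is no deep difficulty, but the three-layer recursion with its off-by-one base cases (e.g.\ $\Psi_s(m,n,|E^{k-1}_{r_m\to v_n}|,\GG) = \Psi_r(m,n+1,\GG)$) must be unwound precisely to confirm that the enumeration is lexicographic with no gaps or repetitions.
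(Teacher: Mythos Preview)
Your proposal is correct and follows essentially the same approach as the paper: both arguments observe that the recursion in Equation~\eqref{eq:compute-Otau} visits the triplets $(m,n,i)$ in ascending lexicographic order, and then identify these coordinates with $e$, $\range_E(f)$, and the index $j$ of the range vertex. Your write-up is more detailed than the paper's (which dispatches the lemma in two sentences), and your remark about uniqueness of the key $(e,\range_E(f),j)$ within a single output graph is a useful clarification, but there is no substantive difference in strategy.
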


\begin{proof}
It is easy to see from the structure of Equation \eqref{eq:compute-Etau} that the triplets $(m,n,i)$ are visited in ascending lexicographic order, whereby edges are placed ordered first by first label $e_m$, then by $\range_E(f) = v_n$, and lastly by the position $i$ within $O_{r_m\to v_n}$. In case $e\range_E(f) = e'\range_E(f')$, it follows that $\mu_i$ was placed before $\mu_{i'}$ if and only if the range of $\mu_i$ has a lower number than the range of $\mu_{i'}$ in $O_{\range_E(e) \to \range_E(f)}$.
\end{proof}

For a labeled graph $\GG = (G,\LL)$ constructed by the algorithm presented above, i.e.\ for a $\GG\in\completeOtau{0,\GG_0}$, the $i\supth$ edge $\mu_i \in G^1$ is said to be a \emph{first edge} if there is no $j < i$ such that $\source_G(\mu_j) = \source_G(\mu_i)$. That is, $\mu_i$ was the first edge with source $\source_G(\mu_i)$ added in the algorithmic construction. Clearly, there is precisely one first edge associated to each vertex.

\begin{lem}\label{lem_construction_sources}
Let $(G,\LL) \in\completeOtau{0,\GG_0}$, and
let $\mu_i, \mu_{i'} \in G^1$ be first edges with labels $[e,f]$ and $[e',f']$, respectively. Assume that $\source_E(e) = \source_E(e')$ and  $\source_E(f) = \source_E(f')$. Let $\source_G(\mu_i) = o^j_{\source(e) \to \source(f)}$ and $\source_G(\mu_{i'}) = o^{j'}_{\source(e) \to \source(f)}$.
Then $i < i'$ if and only if $j < j'$.
\end{lem}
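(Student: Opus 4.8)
The plan is to show that condition \enumref{def_isvalid_outdeg} of Definition \ref{def_isvalid_iii} forces the vertices in any one class $O_{u\to v}$ to receive their first outgoing edge in order of index, and then to read off the claim. First I would fix the setup: since $\source_E(e)=\source_E(e')$ and $\source_E(f)=\source_E(f')$, and since every graph produced by the recursion \eqref{eq:compute-Otau} satisfies parts \enumref{lem_Etau_properties_L1} and \enumref{lem_Etau_properties_L2} of Lemma \ref{lem_Etau_properties} (imposed through $\isvalidb{\cdot}$ via Proposition \ref{prop_Etau_subgraph}\enumref{prop_Etau_subgraph_i}), the source of an edge labeled $[e,f]$ lies in $O_{\source_E(e)\to\source_E(f)}$; hence $\source_G(\mu_i)$ and $\source_G(\mu_{i'})$ lie in one and the same class, and the indices $j,j'$ in the statement are well defined. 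Because $\mu_i$ and $\mu_{i'}$ are first edges, $\mu_i$ is the edge with source $o^j_{\source_E(e)\to\source_E(f)}$ that is appended earliest by the recursion, and likewise for $\mu_{i'}$ and $o^{j'}_{\source_E(e)\to\source_E(f)}$; recall that the $\ell\supth$ appended edge is $\mu_\ell$, so ``appended earlier'' is the same as ``smaller index''.

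The core of the argument is the following observation about \eqref{eq:compute-Otau}: when the recursion appends an edge with source $o^p_{u\to v}$, the predicate $\isvalidb{\cdot}$ is evaluated on the partial graph $\GG$ that exists immediately before the append, and condition \enumref{def_isvalid_outdeg} then requires $p\le\min\{p' : \mathsf{outdeg}_\GG(o^{p'}_{u\to v})=0\}$; consequently every vertex $o^{p'}_{u\to v}$ with $p'<p$ already emits an edge in $\GG$. Applying this when $\mu_{i'}$ is appended — its source being $o^{j'}_{\source_E(e)\to\source_E(f)}$ — shows that at that moment every $o^{p}_{\source_E(e)\to\source_E(f)}$ with $p<j'$ already emits an edge. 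The forward implication is now immediate: if $j<j'$, then $o^j_{\source_E(e)\to\source_E(f)}$ already emits an edge before $\mu_{i'}$ is appended, so its earliest outgoing edge $\mu_i$ was appended before $\mu_{i'}$, i.e.\ $i<i'$. For the converse, observe that $j=j'$ is impossible, as it would make $\mu_i$ and $\mu_{i'}$ both the unique first edge with source $o^j_{\source_E(e)\to\source_E(f)}$ and hence force $i=i'$; and $j>j'$ is impossible, since the forward implication with the roles of the two edges interchanged would give $i'<i$. Therefore $i<i'$ forces $j<j'$, and the equivalence follows.

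I expect the only non-routine step to be isolating the core observation — recognizing that \enumref{def_isvalid_outdeg} is exactly the device that pins down, class by class, the order in which vertices acquire a first outgoing edge. The remaining technical care is minor: one must be precise that $\isvalidb{\GG,\mu}$ in \eqref{eq:compute-Otau} is tested on $\GG$ before $\mu$ is adjoined, so that the out-degrees in \enumref{def_isvalid_outdeg} are those of the graph prior to the adjunction, and one must invoke the (earlier, and structurally transparent) fact that edges are adjoined one at a time in the order $\mu_0,\mu_1,\ldots$, so that index order coincides with adjunction order.
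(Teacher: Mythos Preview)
Your argument is correct and follows the same route as the paper: the proof in the paper is the one-liner ``This follows immediately from Definition \ref{def_isvalid_iii}\enumref{def_isvalid_outdeg},'' and you have simply unpacked why condition \enumref{def_isvalid_outdeg} forces, within each class $O_{u\to v}$, the vertices to acquire their first outgoing edge in increasing index order. Your care about when $\isvalidb{\GG,\mu}$ is evaluated (on $\GG$ before adjoining $\mu$) and your symmetry argument for the converse are both sound.
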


\begin{proof}
This follows immediately from Definition \ref{def_isvalid_iii}\enumref{def_isvalid_outdeg}. 
\end{proof}

\begin{lem}\label{lem_first_added}
Let $(G,\LL) \in\completeOtau{0,\GG_0}$, let $o \in O^0$, and
let $M = \mu_{i_1} \mu_{i_2} \cdots $ be the unique path with source $o$ consisting only of first edges.
Then $M$ is the unique minimal emitted path of $o$.
\end{lem}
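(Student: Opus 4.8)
The plan is to show two things: first, that the path $M$ built from first edges is a minimal emitted path of $o$, and second, that it is the only one. For the first part I would argue by induction on the edges of $M$. Write $M = \mu_{i_1} \mu_{i_2} \cdots$ with $\mu_{i_1} \colon o \llarc{e_1}{f_1} o^{(1)}$, and suppose for contradiction that $M$ is not minimal, so there is an infinite path $A \colon o \llarc{e'_1}{f'_1} \cdots$ over $(O_\tau,\LL_{O_\tau})$ whose label sequence $e'_1 \range_E(f'_1) e'_2 \range_E(f'_2) \cdots$ is lexicographically strictly smaller than that of $M$. Let the first position of disagreement occur at the edge in position $n$. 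By Lemma \ref{lem_construction_ranges}, among all edges emitted by the vertex $o^{(n-1)}$ (the common source of the $n$-th edges of $M$ and of $A$), the one with the lexicographically least label $e\range_E(f)$ carries the least global index, and by Lemma \ref{lem_construction_sources} the \emph{first} edge emitted by $o^{(n-1)}$ is precisely the one that was added earliest — which, because $o^{(n-1)}$ is reached along $M$ by first edges, is the first edge in the algorithmic sense. The key point to nail down is therefore that \emph{the first edge out of a vertex always has the lexicographically smallest label among all edges out of that vertex}; this follows from Definition \ref{def_isvalid_iii}\enumref{def_isvalid_outdeg} together with the ascending traversal order established in the proof of Lemma \ref{lem_construction_ranges}, since an edge can be placed with source $o^{(n-1)}$ only after that vertex already has outdegree reached by strictly earlier edges, and the label ordering of edges placed at a vertex matches the order in which labels are tried. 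Hence the $n$-th label of $M$ is $\le$ the $n$-th label of $A$, contradicting minimality of $A$ at position $n$. So $M$ is a minimal emitted path.

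For uniqueness, suppose $M'$ is another minimal emitted path of $o$, so $\MESe{}$-type label sequences of $M$ and $M'$ coincide (both equal $\MESv{\tau}(o)$). I would show $M = M'$ edge by edge: given that $M$ and $M'$ agree on their first $n-1$ edges, they share the vertex $o^{(n-1)}$; their $n$-th edges have the same label $[e,f]$ by equality of the label sequences. Since $(E_\tau,\LL_\tau)$ is synchronizing in the first label — which is a standing hypothesis here, as ordered permutation graphs are only defined in that case — and since $(E_\tau,\LL_1)$ is left-resolving (Corollary \ref{cor_Etau_resolving}), the property transfers to $(O_\tau,\LL_{O_\tau})$, which presents the same shift space; in particular there is at most one edge out of $o^{(n-1)}$ with a given first label $e$, so the $n$-th edges of $M$ and $M'$ coincide. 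Actually one should be slightly careful: left-resolvingness controls edges by \emph{range} and first label, not by source; but the source $o^{(n-1)}$ together with first label $e$ determines the edge by Lemma \ref{lem_Etau_properties}\enumref{lem_Etau_properties_receive} lifted to $O_\tau$ (which is exactly Proposition \ref{prop_Etau_subgraph}\enumref{prop_Etau_subgraph_ii} read as an equality for a completed graph). Hence $M = M'$.

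The main obstacle I anticipate is the first step: carefully establishing that the algorithm's notion of ``first edge'' out of a vertex coincides with ``edge with minimal label out of that vertex,'' and doing so uniformly across all vertices reached along $M$, not just the root $o$. This requires combining the global traversal order (Lemma \ref{lem_construction_ranges}), the outdegree restriction (Definition \ref{def_isvalid_iii}\enumref{def_isvalid_outdeg}, via Lemma \ref{lem_construction_sources}), and an induction that keeps track of which vertices have been ``activated'' (received an outgoing edge) at each stage of the construction. Once that correspondence is clear, the lexicographic minimality argument and the uniqueness argument are routine.
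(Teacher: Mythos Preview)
Your plan has a genuine gap in the minimality step. You assert that at position $n$ the $n$-th edges of $M$ and of the competing path $A$ share ``the common source $o^{(n-1)}$''. This is not justified: even though $M$ and $A$ have identical label data $e_j\range_E(f_j)$ for $j<n$, they need not traverse the same edges or vertices. From a given vertex there can be several outgoing edges with the same first label $e$ (Lemma~\ref{lem_Etau_properties}\enumref{lem_Etau_properties_receive} pins down an edge by its \emph{range} and first label, not by its source), and indeed several with the same pair $(e,\range_E(f))$. So $A$ may have branched away from $M$ well before position $n$ while keeping identical labels, and your local observation that the first edge out of $o^{(n-1)}$ has smallest label among edges emitted \emph{there} tells you nothing about the $n$-th edge of $A$.

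The paper sidesteps this by comparing $M$ not to an arbitrary $A$ but specifically to the (already known to be unique) minimal emitted path $M'$, and by maintaining a stronger inductive invariant than label equality: the global index $i_n$ of the $n$-th edge of $M$ is at most the index $i_n'$ of the $n$-th edge of $M'$. Minimality of $M'$ gives $e_n'\range_E(f_n') \le e_n\range_E(f_n)$; Lemma~\ref{lem_construction_ranges} applied to $i_n \le i_n'$ gives the reverse inequality, forcing equality and simultaneously ordering the two range vertices within their common class $O_{\range_E(e_n)\to\range_E(f_n)}$; Lemma~\ref{lem_construction_sources} then transfers this vertex ordering to the indices of their first edges, yielding $i_{n+1}\le i_{n+1}'$. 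This index-tracking is exactly the ``activation'' bookkeeping you flag as the main obstacle, but it only works against a single competitor whose edges can themselves be ordered, not against an arbitrary path.

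Your uniqueness argument also needs repair. Equality of minimal emitted sequences yields only $e_n=e_n'$ and $\range_E(f_n)=\range_E(f_n')$, not the full label $[e_n,f_n]$; and Proposition~\ref{prop_Etau_subgraph}\enumref{prop_Etau_subgraph_ii} (like Lemma~\ref{lem_Etau_properties}\enumref{lem_Etau_properties_receive}) determines an edge from its \emph{range} and first label, so ``source $o^{(n-1)}$ together with first label $e$'' does not determine the edge. The paper instead invokes left-synchronization of $(E_\tau,\LL_1)$: once $\LL_1(M)=\LL_1(M')$ as infinite sequences, synchronization forces the vertices along the two paths to agree, after which left-resolvancy identifies the edges.
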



\begin{proof}
Let $o \in O^0$. By Definition \ref{def_isvalid_iii}\enumref{def_isvalid_synch}, $\GG$ is synchronizing in the first label, so the minimal emitted path of $o$ is unique by Lemma \ref{lem_enumeration}. Let $M' = \mu_{i_1'} \mu_{i_2'} \cdots $ be this minimal emitted path  

Consider $\mu = \mu_{i_1}$ and $\mu' = \mu_{i_1'}$. Let $\LL(\mu) = [e,f]$ and let $\LL(\mu') = [e',f']$.
Since $M'$ is the minimal emitted path of $o$, $e' \leq e$. Since $\mu$ was the first edge added with source $o$, Lemma \ref{lem_construction_ranges} implies $e \leq e'$ and hence $e = e'$. Similarly,  $\range_E(f') \leq \range_E(f)$ because $M'$ is a minimal emitted path, and Lemma \ref{lem_construction_ranges} implies $\range_E(f) \leq \range_E(f')$, whereby $\range_E(f') = \range_E(f)$.

Furthermore, Lemma \ref{lem_construction_ranges} implies that the number of $\range_G(\mu)$ in $O_{\range(e) \to \range(f)}$ is lower than or equal to the number of $\range_G(\mu')$. Hence, Lemma \ref{lem_construction_sources} implies that the first edge with source $\range_G(\mu)$ was added before the first edge with source $\range_G(\mu')$, i.e.\ $i_2 \leq i_2'$. Repeated applications of this argument proves that for any $n$, $\LL_1(\mu_{i_1} \mu_{i_2} \cdots \mu_{i_n}) = \LL_1(\mu_{i_1'} \mu_{i_2'} \cdots \mu_{i_n'})$. Since $\GG$ is synchronizing in the first label, this implies that $M = M'$.
\end{proof}

%

\begin{lem}\label{lem_construction_order}
Let $(G,\LL) \in\completeOtau{0,\GG_0}$,
let $o,o' \in O^0$, 
and let their minimal emitted paths be $\mu_{i_1} \mu_{i_2} \cdots$ and $\mu_{i_1'} \mu_{i_2'} \cdots$, respectively. Then $o < o'$ in the order of minimal emitted sequences if and only if $i_1 < i_1'$.
\end{lem}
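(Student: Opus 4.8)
The plan is to prove that the assignment $o\mapsto i_1$, attaching to each vertex the index of its first edge, is strictly order-preserving for the order of minimal emitted sequences. This assignment is injective, since distinct vertices have distinct first edges, and by Lemma~\ref{lem_enumeration} both orders on $O^0$ are strict total orders; an injective strictly order-preserving map between total orders automatically reflects the order, so it suffices to prove that $o<o'$ implies $i_1<i_1'$. Hence I would simply fix a pair with $o<o'$ and argue.

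First I would set up notation along the two minimal emitted paths. Write $\LL(\mu_{i_\ell})=[e_\ell,f_\ell]$ and $\LL(\mu_{i_\ell'})=[e_\ell',f_\ell']$, and put $\alpha_\ell=\range_G(\mu_{i_\ell})$, $\alpha_\ell'=\range_G(\mu_{i_\ell'})$, so that $\alpha_0=o$ and $\alpha_0'=o'$. By Lemma~\ref{lem_first_added} each $\mu_{i_\ell}$ is a first edge, and $\mu_{i_{\ell+1}}$ is the unique first edge with source $\alpha_\ell=\source_G(\mu_{i_{\ell+1}})$; the same holds for the primed path. Since the two minimal emitted sequences are $e_1\range_E(f_1)e_2\range_E(f_2)\cdots$ and $e_1'\range_E(f_1')e_2'\range_E(f_2')\cdots$, the assumption $o<o'$ says precisely that at the least index $n$ with $(e_n,\range_E(f_n))\neq(e_n',\range_E(f_n'))$ one has either $e_n<e_n'$, or $e_n=e_n'$ and $\range_E(f_n)<\range_E(f_n')$, while $e_\ell=e_\ell'$ and $\range_E(f_\ell)=\range_E(f_\ell')$ for all $\ell<n$.

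The main step is to show that the trichotomy between $i_\ell$ and $i_\ell'$ is the same for every $\ell\in\{1,\dots,n\}$. Fix $\ell<n$. Because $e_\ell=e_\ell'$ and $\range_E(f_\ell)=\range_E(f_\ell')$, the endpoint conditions for permutation graphs (Lemma~\ref{lem_Etau_properties}\enumref{lem_Etau_properties_L1} and \enumref{lem_Etau_properties_L2}, inherited by $(G,\LL)$ via Proposition~\ref{prop_Etau_subgraph}\enumref{prop_Etau_subgraph_i}) place both $\alpha_\ell$ and $\alpha_\ell'$ in the same class $O_{u\to v}$ with $u=\range_E(e_\ell)$, $v=\range_E(f_\ell)$; write $\alpha_\ell=o^{j}_{u\to v}$ and $\alpha_\ell'=o^{j'}_{u\to v}$. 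Applying Lemma~\ref{lem_construction_ranges} to $\mu_{i_\ell}$ and $\mu_{i_\ell'}$, whose first labels agree and whose second labels have equal range so that only the third alternative can hold, shows that $i_\ell$ stands to $i_\ell'$ exactly as $j$ stands to $j'$. On the other hand $\mu_{i_{\ell+1}}$ and $\mu_{i_{\ell+1}'}$ are the first edges with sources $\alpha_\ell=o^{j}_{u\to v}$ and $\alpha_\ell'=o^{j'}_{u\to v}$, and by the same endpoint conditions the first labels of these edges both have source $u$ while their second labels both have source $v$; hence Lemma~\ref{lem_construction_sources} applies and shows that $i_{\ell+1}$ stands to $i_{\ell+1}'$ exactly as $j$ stands to $j'$. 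Thus $i_\ell$ and $i_{\ell+1}$ bear the same relation to their primed counterparts, and by induction on $\ell$ so do $i_1$ and $i_n$. Finally, at $\ell=n$ the first or second alternative of Lemma~\ref{lem_construction_ranges} yields $i_n<i_n'$, and therefore $i_1<i_1'$, as required.

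The step I expect to be most delicate is the index bookkeeping in the main step: one must use $\range_G(\mu_{i_\ell})=\source_G(\mu_{i_{\ell+1}})$ so that the same number $j$ occurs as a \emph{range} position in Lemma~\ref{lem_construction_ranges} and as a \emph{source} position in Lemma~\ref{lem_construction_sources}, and one must check that the $\source_E$-hypotheses needed to invoke Lemma~\ref{lem_construction_sources} are furnished by the endpoint conditions once $\alpha_\ell,\alpha_\ell'$ are known to lie in $O_{u\to v}$. Everything else is a direct application of the quoted lemmas.
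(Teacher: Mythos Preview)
Your proof is correct and follows essentially the same approach as the paper: both arguments alternate Lemma~\ref{lem_construction_ranges} and Lemma~\ref{lem_construction_sources} along the two minimal emitted paths to propagate the order relation between $i_\ell$ and $i_\ell'$ until the first position $n$ where the emitted sequences differ. Your presentation is slightly more economical---reducing to one implication via injectivity between total orders and phrasing the inductive step as ``trichotomy preservation''---whereas the paper proves each implication separately, but the substance is the same.
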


\begin{proof}
Assume $o \neq o'$. For each $i$, let $[e_i,f_i]$ and $[e_j',f_j']$ be the labels of $\mu_{i_j}$ and $\mu_{i_j'}$, respectively. Note that the edges $\mu_{i_j}$ and $\mu_{i'_j}$ are all first edges by Lemma \ref{lem_first_added}.
By Definition \ref{def_isvalid_iii}\enumref{def_isvalid_synch}, $\GG$ is synchronizing in the first label, so by Lemma \ref{lem_enumeration}, the two minimal emitted sequences are different. Choose the minimal $l \in \N$ for which $e_l \range_E(f_l) \neq e_l' \range_E(f_l')$. 
If $l = 1$%
, then Lemma \ref{lem_construction_ranges} trivially implies the result, so assume that $l >  1$.

Assume that $i_1 < i_1'$. Since $e_1\range_E(f_1) = e_1'\range_E(f_1')$, Lemma \ref{lem_construction_ranges} implies that $\range_G(\mu)$ has a strictly lower number in $O_{\range_E(e) \to \range_E(f)}$ than $\range_G(\mu')$. Hence, Lemma \ref{lem_construction_sources} implies that $i_2 < i_2'$. Repeated applications of this argument leads to $i_l < i_l'$. Since $e_l \range_E(f_l) \neq e_l' \range_E(f_l')$, it follows that Lemma \ref{lem_construction_ranges} yields $e_l \range_E(f_l) < e_l' \range_E(f_l')$.

For the converse implication, assume that the minimal emitted sequence of $o$ is strictly smaller than the minimal emitted sequence of $o'$. Then $e_l \range_E(f_l) < e_l' \range_E(f_l')$, so $i_l < i_l'$ by Lemma \ref{lem_construction_ranges}.  Since $e_j \range_E(f_j) = e_j' \range_E(f_j')$ for all $j < l$, Lemmas \ref{lem_construction_ranges} and \ref{lem_construction_sources} yield $i_1 < i_1'$. 
%
\end{proof}

\begin{cor}\label{cor_order_of_output}
Let $(G,\LL) \in\completeOtau{0,\GG_0}$.
The following hold:
\begin{enumerate}
\item For $o=o^j_{u \to v},o'= o^{j'}_{u \to v} \in O_{u \to v}$, $o < o'$ in the order of minimal emitted sequences if and only if $j < j'$.
\item For $\mu_i, \mu_i' \in G^1$, $\mu_i < \mu_{i'}$ in the order of minimal emitted sequences if and only if $i < i'$. 
\end{enumerate}
\end{cor}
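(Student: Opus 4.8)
The plan is to read off both statements from Lemmas \ref{lem_construction_ranges}--\ref{lem_construction_order}, so the proof will amount to chaining these together; there is no real obstacle beyond careful bookkeeping. Before anything else I would record that $\GG=(G,\LL)$ synchronizes in the first label, by Definition \ref{def_isvalid_iii}\enumref{def_isvalid_synch}: this is what makes the order of minimal emitted sequences a genuine strict total order on the vertex set $O^0$ and on the edge set $G^1$ (Lemma \ref{lem_enumeration}), lets one speak of \emph{the} first edge emitted by a vertex, and identifies, via Lemma \ref{lem_first_added}, the minimal emitted path of any vertex with the path formed by its first edges. The two parts must be proved in the stated order, because the edge statement will be reduced to the vertex statement.

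For the vertex statement I would fix $o=o^{j}_{u\to v}$ and $o'=o^{j'}_{u\to v}$ (the case $j=j'$ is trivial), let $\mu_{i_1}$ be the unique first edge emitted by $o$ and $\mu_{i_1'}$ the one emitted by $o'$, and use Lemma \ref{lem_first_added} to see that these begin the minimal emitted paths of $o$ and $o'$. Then Lemma \ref{lem_construction_order} turns ``$o<o'$ in the order of minimal emitted sequences'' into ``$i_1<i_1'$''. Writing $[e,f]$ and $[e',f']$ for the labels of $\mu_{i_1}$ and $\mu_{i_1'}$, the $O$-versions of parts \enumref{lem_Etau_properties_L1} and \enumref{lem_Etau_properties_L2} of Lemma \ref{lem_Etau_properties}, which hold for $\GG$ by Definition \ref{def_isvalid_iii}\enumref{def_isvalid_perm} and Proposition \ref{prop_Etau_subgraph}\enumref{prop_Etau_subgraph_i}, force the source $o^{j}_{u\to v}$ of $\mu_{i_1}$ to lie in the class $O_{\source_E(e)\to\source_E(f)}$, so $\source_E(e)=u=\source_E(e')$ and $\source_E(f)=v=\source_E(f')$. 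Since $\mu_{i_1}$ and $\mu_{i_1'}$ are first edges, Lemma \ref{lem_construction_sources} now applies and yields ``$i_1<i_1'$ iff $j<j'$''. Combining the two equivalences finishes this part.

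For the edge statement I would observe that the order of minimal emitted sequences on $G^1$ and the index order $0<1<\cdots$ are two strict total orders on the same finite set $G^1$, so it suffices to prove the single implication $i<i'\implies\mu_i<\mu_{i'}$. Assuming $i<i'$, write $[e,f],[e',f']$ for the labels and $\alpha,\alpha'$ for the ranges of $\mu_i,\mu_{i'}$, and recall from Definition \ref{def_edge_order} that $\MESe{\tau}(\mu_i)=e\,\range_E(f)\,\MESv{\tau}(\alpha)$ and similarly for $\mu_{i'}$. Lemma \ref{lem_construction_ranges} leaves three cases: (i) $e<e'$; (ii) $e=e'$ and $\range_E(f)<\range_E(f')$; (iii) $e=e'$, $\range_E(f)=\range_E(f')$, and the two ranges lie in a common class $O_{\range_E(e)\to\range_E(f)}$ with $\range_G(\mu_i)$ at index $j$ and $\range_G(\mu_{i'})$ at index $j'$, $j<j'$. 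In cases (i) and (ii) the sequences $\MESe{\tau}(\mu_i)$ and $\MESe{\tau}(\mu_{i'})$ differ already in one of their first two entries, with the $\mu_i$-entry the smaller; in case (iii) they agree there, but the vertex statement just proved gives $\MESv{\tau}(\alpha)<\MESv{\tau}(\alpha')$, so the tails decide it the same way. In every case $\MESe{\tau}(\mu_i)<\MESe{\tau}(\mu_{i'})$, i.e.\ $\mu_i<\mu_{i'}$, which is what is needed.

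The one point that needs genuine thought is case (iii) of the edge statement: there the ordering of two \emph{edges} has to be converted into the ordering of their ranges \emph{within a single vertex class}, and that conversion is exactly what the vertex statement delivers --- hence the order in which the two parts are established. The only other thing to be careful about is to invoke synchronization in the first label at the very start, so that the minimal emitted sequences really enumerate (rather than merely preorder) the vertices and edges and so that ``first edge emitted by a vertex'' and ``minimal emitted path'' are well defined.
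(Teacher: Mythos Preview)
Your proof is correct and follows essentially the same route as the paper's: part~(1) chains Lemmas~\ref{lem_first_added}, \ref{lem_construction_order}, and \ref{lem_construction_sources} exactly as the paper does, and part~(2) reduces to Lemma~\ref{lem_construction_ranges} together with part~(1). You have simply spelled out more of the bookkeeping (e.g.\ verifying the hypotheses of Lemma~\ref{lem_construction_sources} and handling the three cases of Lemma~\ref{lem_construction_ranges} explicitly), which the paper leaves implicit.
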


\begin{proof}\
\begin{enumerate}
\item Let $\mu_i$ and $\mu_{i'}$ be the first edge emitted by respectively $o$ and $o'$. By Lemma \ref{lem_first_added} and Lemma \ref{lem_construction_order}, $o < o'$ if and only if $i < i'$. Since $\mu_i$ and $\mu_{i'}$ are first edges, the result then follows from Lemma \ref{lem_construction_sources}.
\item This follows from Lemma \ref{lem_construction_ranges} and the previous statement.
\end{enumerate}
\end{proof}

\begin{thm}
  \label{thm_outer_construction}
  Let $\GG_0$ be the labeled graph with vertex set $O^0$ and no edges.
  $\completeOtau{0 ,\GG_0}$ is the set of ordered permutation graphs
  corresponding to the shift space equivalence classes for every permutative automorphism at level $k$. Furthermore, each ordered permutation graph is constructed only once. 
\end{thm}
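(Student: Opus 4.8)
The plan is to imitate the proof of Theorem~\ref{thm_Pgraph_construct}, adding one recursion layer to account for the vertex numbering and for the ordering clause \enumref{def_isvalid_outdeg} of Definition~\ref{def_isvalid_iii}. I would split the argument into three parts: \emph{soundness} (every $\GG\in\completeOtau{0,\GG_0}$ is the ordered permutation graph of some level-$k$ permutative automorphism), \emph{completeness} (every such ordered permutation graph appears in $\completeOtau{0,\GG_0}$), and \emph{uniqueness} (the recursion \eqref{eq:compute-Otau} builds each graph at most once). Since, by Theorem~\ref{thm_automorphism}, level-$k$ permutative automorphisms synchronize in both labels, Proposition~\ref{prop_vlabels} tells us that two of them have equal ordered permutation graphs exactly when they are shift space equivalent; together with soundness, completeness, and uniqueness this yields the stated description, with each equivalence class produced exactly once.

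For soundness I would first reuse the edge count of Theorem~\ref{thm_Pgraph_construct}: the recursion visits a fixed list of slots, placing at most one edge per slot, so any $\GG=(G,\LL)\in\completeOtau{0,\GG_0}$ has $|G^1|=\sum_{e\colon s\to r}|E^{k-1}_{r\to\ast}|=|E^k|$; by clause \enumref{def_isvalid_perm} (Proposition~\ref{prop_Etau_subgraph}) this maximal count forces the ``exactly one'' properties \enumref{lem_Etau_properties_receive} and \enumref{lem_Etau_properties_emit} of Lemma~\ref{lem_Etau_properties}, hence all of Lemma~\ref{lem_Etau_properties}, with $O_{u\to v}$ in place of $E^{k-1}_{u\to v}$. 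After fixing a class-preserving bijection $O^0\cong E^{k-1}$, Proposition~\ref{prop_sufficient} identifies $G$ with $(E_\tau,\LL_\tau)$ for a unique endpoint-fixing $\tau\in\Perm(E^k)$, and clause \enumref{def_isvalid_synch} with Theorem~\ref{thm_automorphism} makes $\lambda_\tau$ an automorphism. Finally, Corollary~\ref{cor_order_of_output} says that in $G$ the built-in numbering of the vertices within each class and of the edges is precisely the order of minimal emitted sequences, which by Definition~\ref{def_Otau} is exactly the assertion $G=(O_\tau,\LL_{O_\tau})$.

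For completeness, given a level-$k$ permutative automorphism $\tau$, I would list $O_\tau^1=\{\mu_0,\dots,\mu_{|E^k|-1}\}$ in increasing index order. By Definitions~\ref{def_edge_order} and \ref{def_Otau} this is the order of minimal emitted sequences on edges, and because $\MESe{\tau}(\mu)=e\,\range_E(f)\,\MESv{\tau}(\alpha)$ for $\mu\colon\beta\llarc{e}{f}\alpha$ --- where $\alpha$ lies in the class $O_{\range_E(e)\to\range_E(f)}$ by Lemma~\ref{lem_Etau_properties} --- it coincides with the lexicographic order of the slots $(m,n,i)$ of \eqref{eq:compute-Otau}. I would then follow the branch that at slot $(m,n,i)$ picks the unique (Lemma~\ref{lem_Etau_properties}\enumref{lem_Etau_properties_receive}) edge of $(O_\tau,\LL_{O_\tau})$ with first label $e_m$ and range $o^i_{r_m\to v_n}$, and check $\isvalidb{\cdot}$ at each step. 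Clauses \enumref{def_isvalid_synch} and \enumref{def_isvalid_perm} hold because $(O_\tau,\LL_{O_\tau})$ is a relabeling of a permutation graph synchronizing in both labels, and both properties are inherited by subgraphs (Lemma~\ref{lem_synch}, Proposition~\ref{prop_Etau_subgraph}). The delicate step is clause \enumref{def_isvalid_outdeg}: at slot $(m,n,i)$ the partial graph $\GG$ consists exactly of the lower-indexed edges; writing the new edge as $\mu\colon o^{i_0}_{s_m\to q}\llarc{e_m}{f}o^i_{r_m\to v_n}$ I must show every $o^{j'}_{s_m\to q}$ with $j'<i_0$ already emits an edge in $\GG$, and if $\nu$ denotes the first edge of the minimal emitted path of $o^{j'}_{s_m\to q}$ then $\MESe{\tau}(\nu)=\MESv{\tau}(o^{j'}_{s_m\to q})<\MESv{\tau}(o^{i_0}_{s_m\to q})\le\MESe{\tau}(\mu)$ --- strict because $j'<i_0$ and the minimal-emitted-sequence order is a strict total order by Lemma~\ref{lem_enumeration}, and the last inequality because $\mu$ is emitted by $o^{i_0}_{s_m\to q}$ --- so $\nu$ has smaller index and hence lies in $\GG$. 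Thus the branch is never pruned, places all $|E^k|$ edges, and terminates at $\{(O_\tau,\LL_{O_\tau})\}$.

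Uniqueness is the same observation as for Theorem~\ref{thm_Pgraph_construct}: the recursion fills a fixed list of slots, and for a completed graph $\GG$ the edge placed at slot $(m,n,i)$ is forced to be the unique edge of $\GG$ with first label $e_m$ and range $o^i_{r_m\to v_n}$, so $\GG$ determines its own generating branch and occurs in at most one leaf; with completeness this gives ``exactly once''. I expect the main obstacle to be precisely the verification of clause \enumref{def_isvalid_outdeg} above --- i.e.\ establishing that the algorithm's slot order coincides with the minimal-emitted-sequence order on the edges of the target ordered permutation graph, and that an ``earlier'' vertex must already emit an edge before a ``later'' vertex with the same source in $E$ receives its turn. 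Everything else reduces to the edge count of Theorem~\ref{thm_Pgraph_construct}, the permutation-graph characterizations of Propositions~\ref{prop_sufficient} and \ref{prop_Etau_subgraph}, Corollary~\ref{cor_order_of_output}, and the inheritance of the synchronization and subgraph conditions.
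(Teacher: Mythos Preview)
Your proposal is correct and follows essentially the same route as the paper: the three-part split into soundness, completeness, and uniqueness, the reuse of the edge-count argument from Theorem~\ref{thm_Pgraph_construct} together with Corollary~\ref{cor_order_of_output} for soundness, the induction along the edge list in minimal-emitted-sequence order for completeness, and the observation that the slot sequence forces the generating branch for uniqueness all match the paper's proof. Your verification of clause~\enumref{def_isvalid_outdeg} via the inequality $\MESe{\tau}(\nu)=\MESv{\tau}(o^{j'}_{s_m\to q})<\MESv{\tau}(o^{i_0}_{s_m\to q})\le\MESe{\tau}(\mu)$ is exactly the argument the paper uses, only spelled out more explicitly.
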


\begin{proof}
Let $\GG\in\completeOtau{0,\GG_0}$ be a labeled graph constructed by the algorithm. The aim is to show that $\GG$ is an ordered permutation graph. By an argument analogous to the one used in the proof of Theorem \ref{thm_Pgraph_construct}, Definition \ref{def_isvalid_iii}\enumref{def_isvalid_perm} implies that any assignment of the elements of $E^{k-1}_{u \to v}$ to $O_{u \to v}$ will map $\GG$ to a permutation graph. Furthermore, by Definition \ref{def_isvalid_iii}\enumref{def_isvalid_synch}, the resulting permutation graph is synchronizing in both labels and hence corresponds to an automorphism. By Corollary \ref{cor_order_of_output}, the vertices and edges of $\GG$ are ordered correctly, and consequently $\GG$ is an ordered permutation graph for an automorphism. 

Next, let $\GG = (G, \LL)$ be an ordered permutation graph, corresponding to the shift space equivalence class for a permutative automorphism. The aim is to show that $\GG\in\completeOtau{0,\GG_0}$.
Given $0 \leq j \leq |G^1|$, let $\GG_j$ denote the subgraph of $\GG$ obtained by including precisely the first $j$ edges of $G^1$ in the order of minimal emitted sequences. Note that for $j >0$, the $j$ edges in $\GG_j$ are $\mu_0, \ldots, \mu_{j-1}$.
The following proves by induction that for each $j$, the subgraph $\GG_j$ is constructed by an intermediate step of the algorithm. 
Note that the algorithm starts from $\GG_0$. 

For the induction step, assume that $\GG_{j}$ has been constructed at an intermediate step.
Let $\mu_j \in G^1$ with $\mu_j\colon o^l_{s \to u} \llarc{e_m}{f} o^i_{r \to v_n}$. 
Consider another edge $\mu_{j'} \in G^1$ for which $\mu_{j'}\colon o^{l'}_{s' \to u'} \llarc{e_{m'}}{f'} o^{i'}_{r' \to v_{n'}}$.
Since $\GG$ is an ordered permutation graph, $j' < j$ if and only if $(n',m',i') < (n,m,i)$ lexicographically. Specifically, the edges in $\GG_{j}$ are precisely the $\mu_{j'} \in G^1$ for which $(n',m',i') < (n,m,i)$. Hence, the next step in the algorithm, following the construction of $\GG_{j}$ will add an edge with the same first label and range as $\mu_j$. The specific edge $\mu_j$ can be added by the algorithm if and only if $(\GG_{j},\mu_j)$ satisfies Definition \ref{def_isvalid_iii}. Since $\GG$ is an ordered permutation graph, $(\GG_{j},\mu_j)$ satisfies Definition \ref{def_isvalid_iii} \enumref{def_isvalid_synch} and \enumref{def_isvalid_perm}.

It remains to be shown that $(\GG_{j},\mu_j)$ satisfies Definition \ref{def_isvalid_iii} \enumref{def_isvalid_outdeg}, i.e.\ that $o^l_{s \to u}$ is a valid source in the construction. Assume there exists a $o^{l'}_{s \to u}$ with $l' < l$ (the condition is trivially satisfied if no such $l'$ exists). Since $\GG$ is an ordered permutation graph, $o^{l'}_{s \to u} < o^{l}_{s \to u}$ in the order of minimal emitted sequences. Hence, there is an edge $\mu_{j'} \in G^1$ with source $o^{l'}_{s \to u}$ and $j' < j$, so  $o^{l'}_{s \to u}$ emits at least one edge in $\GG_{j}$. Consequently, $(\GG_{j},\mu_j)$ satisfies Definition \ref{def_isvalid_iii} \enumref{def_isvalid_outdeg},
whereby $\GG_{j} \oplus \mu_j = \GG_{j+1}$ can be constructed by an intermediate step of the algorithm. Because $\GG = \GG_{|G^1|}$, the procedure terminates, and the desired result follows.

Let $\GG$ be an ordered permutation graph. To see that the algorithm does not construct $\GG$ twice, note that it places edges in ascending order. Given a subgraph $\GG_j$ as above, there is precisely one way to place the next edge to obtain $\GG_{j+1}$.  
\end{proof}

\section{Example: Constructing automorphisms} 
\label{sec_bowtie}
Consider the graph $E$ shown in Figure \ref{fig_bowtie}. In this
section, the techniques developed in the previous sections will be
used to investigate the permutative endomorphisms and automorphisms of
$C^\ast(E)$. For each $k$, there are clearly $3\cdot2^k$ paths of
length $k$ in $E$. For small values of $k$, Table \ref{table_bowtie}
summarizes the number of paths, permutative endomorphisms, permutative
automorphisms, and classes of permutative automorphisms equivalent up
to adjunction by permutative unitaries as described in Proposition
\ref{prop_vlabels}.
The numbers in the first three columns are easily computed, while
the last two columns require finding all the automorphisms. Notice the rapid combinatorial growth with $k$. Note also that the
quotient of the numbers in columns 4 and 5 is the number of
permutative endomorphisms of the previous level, as can be understood from Proposition \ref{prop_SSE}.

\begin{figure}
\begin{center}
\begin{tikzpicture}
  [bend angle=10,
  clearRound/.style = {circle, inner sep = 0pt, minimum size = 17mm},
  clear/.style = {rectangle, minimum width = 5 mm, minimum height = 5 mm, inner sep = 0pt},  
  greyRound/.style = {circle, draw, minimum size = 1 mm, inner sep =
    0pt, fill=black!10},
  grey/.style = {rectangle, draw, minimum size = 6 mm, inner sep =
    1pt, fill=black!10},
  white/.style = {rectangle, draw, minimum size = 6 mm, inner sep =
    1pt},
  to/.style = {->, shorten <= 1 pt, >=stealth', semithick}]  
  
  \node[grey] (v1) at (0,0) {$v_1$};
  \node[grey] (v2) at (2,0) {$v_2$}; 
  \node[grey] (v3) at (4,0) {$v_3$};  

  \draw[to,loop left] (v1) to node[auto] {$a$} (v1);
  \draw[to, bend left] (v1) to node[auto] {$b$} (v2);
  \draw[to, bend left] (v2) to node[auto] {$c$} (v1);
  \draw[to, bend left] (v2) to node[auto] {$d$} (v3);
  \draw[to, bend left] (v3) to node[auto] {$e$} (v2);
  \draw[to, loop right] (v3) to node[auto] {$f$} (v3);
  
\end{tikzpicture}
\end{center}
\caption{The graph $E$ considered in Section \ref{sec_bowtie}.} 
\label{fig_bowtie}
\end{figure}
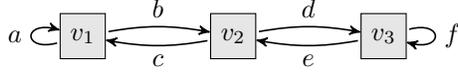

\begin{table}
\begin{center}
\begin{tabular}{c r r c l r r}
\toprule
$k$ & Paths & \multicolumn{3}{ c }{\quad \quad \; Endomorphisms} & Automorphisms & Classes \\ 
\toprule
1 & 6 & & 1 & & 1 & 1 \\
2 & 12 & $(2!\cdot1! \cdot 1!)^3$ &$=$& 8 & 2 & 2 \\
3 & 24 & $(2!\cdot3! \cdot 3!)^3$ &$=$&  373,248  & 32 & 4 \\
4 & 48 & $(5!\cdot5! \cdot 6!)^3$ &$\approx$& $1.1\cdot 10^{21}$ & 454,989,312  & 1,219 \\
5 & 96 & $(10!\cdot 11! \cdot 11!)^3$ &$\approx$& $1.9 \cdot 10^{65}$ & ? & ? \\
\toprule
\end{tabular}
\caption{ 
  Number of paths of length $k$, permutative endomorphisms,
  permutative automorphisms, and order classes of permutative automorphisms
  for small values of $k$.
}
\label{table_bowtie}
\end{center}
\end{table}

At levels $k=1$ and $k=2$, it is possible to construct the
ordered permutation graphs by hand using the algorithms and tools
given above.
At level $k=3$, there are too many permutative endomorphisms to check
by hand, however, it would be possible to test
all the permutative endomorphisms individually using a direct brute
force computer program, but it would be time consuming. Using the algorithm from Section \ref{sec_Otau_alg},
the search is completed instantaneously.
At level $k=4$, there are so many endpoint-fixing
permutations that the problem cannot be handled by brute force.  Even
assuming that the automorphism condition can be tested in a
microsecond, considering all permutative endomorphisms individually
would take 35 million years.  However, combining the methods presented
in this paper, it is possible to reduce the problem to a tractable
one: Constructing representations of the equivalence classes directly
using the methods of Section \ref{sec_inner_order} reduces the number of
graphs to consider by a factor 373,248 from $1.1\cdot 10^{21}$ to
$3.0\cdot 10^{15}$, so -- under the same assumption as above --
investigating each class would take about 100 years. However, the pruning of the search tree built into the algorithm presented in Section \ref{sec_Otau_alg} means that 
we are able to complete the exhaustive search for 
automorphisms classes in minutes, and this investigation reveals that there are 1219 distinct equivalence classes. 

There is, however, no way to avoid the combinatorial growth of the
problem: At level $k=5$, the number of permutative
endomorphism classes is $6\cdot 10^{28}$ times greater than at level
$k=4$, and hence, we expect the number of automorphism classes to be so
large that it is impossible to generate all of them. For level $k=5$
and beyond, we are thus forced to restrict our attention to
interesting subsets. The following sections give detailed discussions
of the automorphisms found in the levels $k=1$ through $k=5$.

\subsection*{Levels $k=1$ and $k=2$}
Clearly, the identity is the only endpoint-fixing permutation at level
$k=1$, and it is straightforward to check by hand that it induces an
automorphism.  At level $k=2$, there are precisely two permutative
automorphisms. They are given by the following two endpoint-fixing
permutations of $E^2$:
\begin{displaymath} \tau_{2,0} = \Id \qquad,
  \qquad \tau_{2,1} = (de, cb).
\end{displaymath}

\subsection*{Level $k=3$}
At this level, there are four different equivalence classes of
permutative automorphisms. Representatives of these classes are given
by the following endpoint-fixing permutations of $E^3$:
\begin{align*}
\tau_{3,0} &=  \Id \\
\tau_{3,1} &= (d\f e, cab) \\
\tau_{3,2} &= (bde, bcb) (ede, ecb) \\
\tau_{3,3} &= (bde, bcb) (d\f e, cab) (ede, ecb)
\end{align*} 
Each class contains 8 permutative automorphisms, since every
endpoint-fixing permutation from the previous level gives a way to
permute the labels of the permutation graph resulting in a
presentation of a permutative automorphism that is inner equivalent to
the original one as described in Proposition \ref{prop_vlabels}.  Note that $\tau_{3,1}$ induces an automorphism inner
equivalent to $\tau_{2,1}$. In general, each permutative automorphism
at level $k$ will give rise to inner equivalent permutative automorphism at level
$k+1$ in this way.

\subsection*{Level $k=4$}
At this level, there are 1219 classes of permutative automorphisms,
which is too many to list. As an example, consider the permutation
\begin{displaymath}
\tau_4 = (abde,bd\f e,bcab)(ecab,ed\f e).
\end{displaymath}
This gives a permutative automorphism that occurs at this level
without being equivalent to one occurring at a of lower level. This
permutative automorphism is of interest because 
it has infinite order, unlike the permutative automorphisms at levels
$1$ through $3$.

\subsection*{Level $k=5$}
As mentioned above, it is not feasible to find all permutative
automorphisms at level $k=5$ using the methods developed in this
paper. However, by restricting to automorphisms with certain
properties, it is possible to find interesting automorphisms by brute
force. Define 
\begin{displaymath}
T_1 = S_a + S_b + S_d \quad \textnormal{ and } \quad
T_2 = S_c + S_e +S_f.
\end{displaymath} 
Then it is straightforward to check that $T_1,T_2$ generate a unital
copy of ${\mathcal O}_2$ inside $C^*(E)$. 
The aim of the following
will be to identify permutative automorphisms of $C^*(E)$ that fix
$T_2$, and that send $T_1$ to a sum of words in $T_1, T_2, T_1^*, T_2^*$. 
Such a permutative automorphism of $C^*(E)$ will
naturally induce an automorphism of ${\mathcal O}_2$. However, even
with this restriction, there are too many possibilities to check by
brute force. To counter this, we restrict to permutations that fix any
path (of length 5) starting with $c$, $e$ or $f$ (i.e.\ the induced
automorphism of ${\mathcal O}_2$ fixes all paths starting with the
second edge). This restricts the problem to an investigation of
$5!\cdot10! = 435456000$ permutations, and this number is sufficiently
small to be handled by brute force.  This investigation gives 12
distinct $\OO_2$-preserving permutative automorphisms that fix all paths
starting with $c$, $e$ or $f$. As an example, two of these are:
\begin{displaymath}
\tau_5 = (abcbc,abdec,bd\f ec)(abd\f e,bd\f \f e,bdede)(abd\f \f ,bd\f \f \f ,bded\f )
\end{displaymath}
and
\begin{displaymath}
\tau_5' = (aaabc,abdec,abcbc,bd\f ec)(aabde,abd\f e,bdede,bd\f \f e)(aabd\f ,abd\f \f ,bded\f ,bd\f \f \f ).
\end{displaymath}

Evidence suggests that $\phi_{\tau_5}$ has infinite order while $\phi_{\tau_5'}$ can be proved to be of order 60. General tools for examining the order of permutative automorphisms will be examined in the forthcoming paper mentioned in Remark \ref{rem_computations}.


\section{Example: Automorphisms of $\OO_n$}
\label{sec:On}

\newcommand{\ift}{$\infty$}
\begin{table}
\begin{center}
\begin{tabular}{l cccccc}
\toprule
Permutation $\tau$ & \multicolumn{6}{c}{Orders of induced automorphisms} \\ 
\toprule
      &  $\phi_\tau$ & $\phi_{(a,b)}\phi_\tau$ & $\phi_{(a,c)}\phi_\tau$ & $\phi_{(b,c)}\phi_\tau$ &$\phi_{(a,b,c)}\phi_\tau$ & $\phi_{(a,c,b)}\phi_\tau$\!\!\!\\
      \cline{2-7}\\[-0.5em]
$\Id$                             & 1   & 2  & 2  & 2  & 3  & 3  \\
$(ca,cb)$                         & 2   & 2  &\ift&\ift&\ift&\ift\\
$(bb,bc)(cb,cc)$                  & 2   &\ift&\ift& 2  &\ift&\ift\\
$(bb,cb,ca)(bc,cc)$               &\ift &\ift&\ift&\ift&\ift&\ift\\
$(bb,cb)(bc,cc)$                  & 2   &\ift&\ift& 2  &\ift&\ift\\
$(bb,cc)(bc,cb)$                  & 2   &\ift&\ift& 2  &\ift&\ift\\
$(ba,bc)$                         & 2   &\ift& 2  &\ift&\ift&\ift\\
$(ba,cc,bc)(bb,cb)$               &\ift &\ift&\ift&\ift&\ift&\ift\\
$(ac,bc,ba)$                      &\ift &\ift&\ift&\ift&\ift&\ift\\
$(ac,cc,bc,ca,ba)(bb,cb)$         &\ift &\ift&\ift&\ift&\ift&\ift\\
$(ac,bc,bb)(cb,cc)$               &\ift &\ift&\ift&\ift&\ift&\ift\\
$(ac,cc,bb)(ba,ca)(bc,cb)$        &\ift &\ift&\ift&\ift&\ift&\ift\\
$(ac,bc)$                         & 2   &2   &\ift&\ift&\ift&\ift\\
$(ac,bc)(ca,cb)$                  & 2   &2   &\ift&\ift&\ift&\ift\\
$(ac,cc,bc)(ba,ca)(bb,cb)$        &\ift &\ift&\ift& 2  &\ift& 2  \\
$(ac,cc,bc)(ba,ca,bb,cb)$         &\ift &\ift&\ift& 2  &\ift& 2  \\
\toprule
\end{tabular}
\caption{ 
Permutations that induce distinct outer permutative automorphisms at level 2 in $\OO_3$, and the orders of the 96 shift space automorphisms. The first column lists the 16 permutations that induce distinct automorphisms of $\OO_3$ up to inner equivalence and graph automorphism. For each such $\tau$, its row lists the orders of the 6 shift space automorphisms obtained by composing $\phi_\tau$ with the shift space automorphisms induced by graph automorphisms, i.e., the 6 permutations of the three edges in the graph for $\OO_3$. The orders are computed using the techniques mentioned in Remark \ref{rem_computations}. 
}
\label{table_O3_2}
\end{center}
\end{table}

The permutative automorphisms of $\OO_n$ have already been investigated experimentally in \cite{conti_kimberley_szymanski,conti_szymanski_labeled_trees}, and it is straightforward to use the algorithms presented in Sections \ref{sec_Etau_alg} and \ref{sec_Otau_alg} to verify the numbers of pertutative automorphisms of $\OO_n$ and classes of such automorphisms found in these two papers. It is worth noting that the algorithms presented in the present paper yield significant improvements over the specialized approaches used in the previous investigations. First of all, the methods presented here can be applied to a wide range of interesting graph algebras beyond the Cuntz algebras. Secondly, they are much faster than the previously used methods. For instance, the original identification of the automorphism classes at level 2 in $\OO_4$ took approximately 70 days of computation on a server \cite{conti_szymanski_labeled_trees} while our methods were able to deliver the same result after two seconds of computation on a standard laptop, an improvement by a factor of more than two million. 

As an example of the results achieved in this way, consider the permutative automorphisms at level 2 in $\OO_3$.  In this case, there are 96 outer permutative automorphisms, but the entire collection can be reconstructed from the 16 permutations listed in Table \ref{table_O3_2} by composing with the graph automorphisms of the original graph, i.e.\ the $3! = 6$ permutations of the three edges. As in the previous example, the techniques mentioned in Remark \ref{rem_computations} were used to investigate the orders of these permutative automorphisms, and the results of this investigation are listed in Table \ref{table_O3_2}. The exhaustive search for outer permutative automorphisms at level 2 in $\OO_3$ required $2\mathrm{ms}$ of computation time, and the automorphism orders took $60\mathrm{ms}$ in total to calculate. 


%

In spite of the drastric improvement in the speed of the computations achieved through the techniques developed in this paper, it may still not be feasible to extend the investigation of the Cuntz algebras from \cite{conti_kimberley_szymanski,conti_szymanski_labeled_trees} to larger values of $n$, or to higher levels, due to the violent combinatorial growth of the problem.



\end{document}